\allowdisplaybreaks \setlength{\textwidth}{6.5in}
\numberwithin{equation}{section}
\newtheorem{theorem}{Theorem}[section]
\newtheorem{remark}[theorem]{Remark}
\newtheorem{lemma}[theorem]{Lemma}
\newtheorem{corollary}[theorem]{Corollary}
\renewcommand{\theenumi}{\roman{enumi}}
\theoremstyle{definition}
\newtheorem{defn}[theorem]{Definition}
\theoremstyle{remark}
\newtheorem{rem}[theorem]{Remark}
\DeclareMathOperator{\Prob}{\mathbf{P}}
\DeclareMathOperator{\EHR}{EHR}
\DeclareMathOperator{\E}{\mathbf{E}}
\DeclareMathOperator{\maxden}{\rho_{\max}}
\DeclareMathOperator{\poi}{Poisson}
\title[Existential zero-one laws]{Zero-one laws for existential first order sentences of bounded quantifier depth}
\date{}
\author{Moumanti Podder}
\address{Moumanti Podder, \ Department of Mathematics, \ University of Washington, \ C-524 Padelford Hall, West Stevens Way Northeast, Seattle, WA 98105, United States.}
\email{mpodder3@uw.edu}
\author{Maksim Zhukovskii}
\address{Maksim Zhukovskii, Moscow Institute of Physics and Technology, laboratory of advanced combinatorics and network applications, 9 Institutskiy per., Dolgoprodny, Moscow Region, 141701, Russian Federation; 
Adyghe State University, Caucasus mathematical center, ul. Pervomayskaya, 208, Maykop, Republic of Adygea, 385000, Russian Federation; 
The Russian Presidential Academy of National Economy and Public Administration, Prospect Vernadskogo, 84, bldg 2, Moscow, 119571, Russian Federation.}
\email{zhukmax@gmail.com}
\begin{document}
\bibliographystyle{plainnat}

\begin{abstract}
For any fixed positive integer $k$, let $\alpha_{k}$ denote the smallest $\alpha \in (0,1)$ such that the random graph sequence $\left\{G\left(n, n^{-\alpha}\right)\right\}_{n}$ does not satisfy the zero-one law for the set $\mathcal{E}_{k}$ of all existential first order sentences that are of quantifier depth at most $k$. This paper finds upper and lower bounds on $\alpha_{k}$, showing that as $k \rightarrow \infty$, we have $\alpha_{k} = \left(k - 2 - t(k)\right)^{-1}$ for some function $t(k) = \Theta(k^{-2})$. We also establish the precise value of $\alpha_{k}$ when $k = 4$. 
\end{abstract}


\keywords{existential first order logic, zero-one laws, Ehrenfeucht games, strictly balanced graphs, $\alpha$-safe pairs, graph extension properties}

\thanks{The first author was partially supported by the grant NSF DMS-1444084. The second author was supported by Grant 16-11-10014 of the Russian Science Foundation.}

\maketitle

\section{Introduction}

\sloppy Given a graph $G$, we denote by $V(G)$ the set of its vertices and by $E(G)$ the set of its edges; when $G$ is clear from the context, we simply write $V$ and $E$ respectively. We set $e(G) = |E(G)|$ and $v(G) = |V(G)|$. For two vertices $x$ and $y$ in $V$, we say that $\{x, y\} \in E$ when $x$ and $y$ is adjacent. When the graph and its edge set are evident, we also alternatively use the notation $x \sim y$. We denote by $\mathbb{N}$ the set of all positive integers and by $\mathbb{N}_{0}$ the set of all non-negative integers.\\

A \emph{sentence} is a formula in mathematical logic that does not have free variables (see [\cite{libkin}, Subsection~2.1] for basic definitions in mathematical logic). The \emph{first order} logic on graphs comprises finite sentences involving the following components:
\begin{itemize}
\item the vertices as propositional variables, denoted in general by lower case letters such as $x, y, z \ldots$;
\item the relation of vertex equality (denoted $x = y$) and the relation of vertex adjacency (denoted $x \sim y$);
\item Boolean connectives $\wedge, \vee, \neg, \implies, \Leftrightarrow$;
\item existential (denoted $\exists$) and universal (denoted $\forall$) quantification, allowed only over vertices.
\end{itemize}
We refer the reader to \cite{libkin, immerman, marker, strange} for further reading on first order logic. Examples of first order sentences include 
$$\exists x [\forall y [\neg x \sim y]],$$
which expresses the property of a graph that it contains an isolated vertex, and 
$$\exists x [\exists y [[x \sim y] \wedge \forall z [x \sim z \implies z = y]]],$$
which expresses the property of a graph that there exists a vertex with degree precisely $1$, i.e.\ precisely $1$ neighbour. The \emph{quantifier depth}, also referred to as the \emph{quantifier rank}, of a first order sentence is defined as the maximum number of nested quantifiers in the sentence; we refer the reader to [\cite{libkin}, Definition 3.8] for the formal definition. We call a first order sentence \emph{existential} if 
\renewcommand{\labelenumi}{\theenumi}
\renewcommand{\theenumi}{\roman{enumi})}
\begin{enumerate*}
\item all its quantifiers are existential,
\item negations are allowed only in front of atomic formulas.
\end{enumerate*}
Neither of the examples above is an existential first order sentence; an example of such a sentence would be 
$$\exists x [\exists y \exists z [[x \sim y] \wedge [x \sim z] \wedge [\neg y = z]]],$$
which expresses the property that there exists a vertex with at least $2$ children. Given a first order sentence $\gamma$ and a graph $G$, the notation $G \models \gamma$ indicates that $\gamma$ is true on $G$. \\

We recall the Erd\H{o}s-R\'{e}nyi random graph model $G(n, p(n))$ -- starting with $n$ vertices that are denoted $1, \ldots, n$, the edge between vertices $i$ and $j$ is added with probability $p(n)$, mutually independently over all pairs $\{i, j\}$. We say that a graph property $A$ holds \emph{asymptotically almost surely} (abbreviated henceforth as a.a.s.)\ on $\{G(n, p(n))\}$ if the probability that $A$ holds on $G(n, p(n))$ approaches $1$ as $n \rightarrow \infty$; similarly, we say that a sentence $\gamma$ is a.a.s.\ true on $\{G(n, p(n))\}$ if $\lim_{n \rightarrow \infty} \Prob\left[G(n, p(n)) \models \gamma\right] = 1$. 

\begin{defn}
Given a set $\mathcal{L}$ of first order sentences, we say that the random graph sequence $\left\{G(n, p(n))\right\}_{n}$, for a given sequence $\{p(n)\}$ of edge probability functions, satisfies the zero-one law for $\mathcal{L}$ if for every sentence $\gamma$ in $\mathcal{L}$, the limit $\lim_{n \rightarrow \infty} \Prob\left[G(n, p(n)) \models \gamma\right]$ exists and equals either $0$ or $1$.
\end{defn}
In \cite{04}, the following well-known theorem was established.
\begin{theorem}\label{irrational}
For $\alpha \in (0,1)$, the random graph sequence $\left\{G(n, n^{-\alpha})\right\}_{n}$ satisfies the zero-one law for first order logic if and only if $\alpha$ is irrational.
\end{theorem}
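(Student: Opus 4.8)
\noindent\emph{Proof sketch.} The plan is to prove the two implications separately; I expect ``$\alpha$ rational $\Rightarrow$ the zero-one law fails'' to be short, while the converse is the substantive direction and should rest on Ehrenfeucht--Fra\"iss\'e games. For the easy direction, suppose $\alpha = a/b \in (0,1)$ with $\gcd(a,b)=1$, so $1/\alpha = b/a > 1$. I would exhibit a connected \emph{strictly balanced} graph $H$ --- one with $e(H')/v(H') < e(H)/v(H)$ for every nonempty proper subgraph $H' \subsetneq H$ --- having $v(H)/e(H) = \alpha$; such graphs are produced by standard constructions for every rational in $(0,1)$. At $p = n^{-\alpha} = n^{-v(H)/e(H)}$, the classical first- and second-moment analysis of subgraph counts shows that the number of copies of $H$ in $G(n,p)$ converges in distribution to $\poi(\lambda)$ for some $\lambda \in (0,\infty)$, so $\Prob[H \subseteq G(n,n^{-\alpha})] \to 1 - e^{-\lambda} \in (0,1)$. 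Since ``$G$ contains a copy of $H$'' is expressed by an (in fact existential) first order sentence, this violates the zero-one law.

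For the converse, assume $\alpha$ irrational. I would work with the standard machinery of rooted extensions: for graphs $R \subseteq H$ write $v(H/R) = v(H) - v(R)$, $e(H/R) = e(H) - e(R)$ and $f_\alpha(H/R) = v(H/R) - \alpha\, e(H/R)$; call the extension $\alpha$-\emph{safe} if $f_\alpha(H'/R) > 0$ for every $H'$ with $R \subsetneq H' \subseteq H$, and $\alpha$-\emph{rigid} if $f_\alpha(H/R') < 0$ for every $R'$ with $R \subseteq R' \subsetneq H$. For a constant $K$ and a real $\varepsilon > 0$, call a graph $G$ $(K,\varepsilon)$-\emph{saturated} if (i) for every rooted $\alpha$-safe extension $(R,H)$ with $v(H) \le K$ and every embedding of $R$ into $G$, that embedding extends to at least $n^{\varepsilon}$ copies of $H$ in $G$, and (ii) $G$ contains no subgraph $H$ with $v(H) \le K$ for which $f_\alpha(H/\varnothing) < 0$. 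The two steps I would carry out are: \textbf{(A)} for every fixed $K$ and all sufficiently small $\varepsilon = \varepsilon(K) > 0$, a.a.s.\ $G(n, n^{-\alpha})$ is $(K,\varepsilon)$-saturated; and \textbf{(B)} for every $k$ there is a $K = K(k)$ so that any two $(K,\varepsilon)$-saturated graphs satisfy exactly the same first order sentences of quantifier depth at most $k$.

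Step \textbf{(A)} I expect to be a routine moment computation. For clause (i), the expected number of extensions of a fixed embedding of $R$ to a copy of $H$ is $n^{f_\alpha(H/R) + o(1)}$, and every intermediate subextension of a safe extension also has positive $f_\alpha$, so a variance estimate (Chebyshev, or Janson's inequality for the upper tail) gives concentration; a union bound over the finitely many isomorphism types $(R,H)$ with $v(H) \le K$ and over all embeddings of $R$ then finishes it. For clause (ii), the expected number of copies in $G(n, n^{-\alpha})$ of a fixed $H$ with $v(H) \le K$ is $n^{v(H) - \alpha e(H) + o(1)}$; by irrationality $v(H) - \alpha e(H) \ne 0$ for every nonempty finite $H$, so among the finitely many $H$ with $v(H) \le K$ the negative values are all at most $-\delta$ for some $\delta = \delta(K) > 0$, and a single application of Markov's inequality rules them out simultaneously. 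This is exactly where rationality would break things: a rational $\alpha$ permits $v(H) - \alpha e(H) = 0$ for the strictly balanced $H$ above, which produces the Poisson behaviour exploited in the first paragraph.

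Step \textbf{(B)} is the heart of the proof and the step I expect to be the main obstacle. Given $(K,\varepsilon)$-saturated graphs $G_1, G_2$, I would show Duplicator wins the $k$-round Ehrenfeucht--Fra\"iss\'e game on $(G_1,G_2)$; the Ehrenfeucht--Fra\"iss\'e theorem (see \cite{libkin}) then forces agreement on all sentences of quantifier depth $\le k$. The tool is the \emph{closure} of a bounded tuple $\bar a$ in $G$, formed (at a chosen radius $m$) by repeatedly adjoining every $\alpha$-rigid extension on at most $m$ new vertices; this terminates after boundedly many steps because each rigid adjunction strictly decreases the $f_\alpha$-value of the configuration while $f_\alpha$ stays bounded below on graphs of bounded size --- once more because irrationality keeps the relevant values of $f_\alpha$ away from $0$ --- so the closure has size bounded in terms of $|\bar a|$, $m$ and $\alpha$. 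Duplicator's invariant, maintained after round $j$, is that the chosen tuples $\bar a$ in $G_1$ and $\bar b$ in $G_2$ have isomorphic closures at radius $m_j$ via a map sending $\bar a$ to $\bar b$, where $m_k > m_{k-1} > \cdots$ is chosen to leave enough room --- in vertices and in the $n^{\varepsilon}$ saturation budget --- for the remaining rounds. When Spoiler plays $x \in G_1$: if $x$ lies in the current closure Duplicator replies through the isomorphism; otherwise the closure of the tuple $\bar a x$ is built from the closure of $\bar a$ by a bounded extension which, once its rigid part (forced to lie in the closure already) is stripped off, is $\alpha$-safe, and can therefore be copied over the closure of $\bar b$ using saturation of $G_2$, after which one checks the enlarged closures are again isomorphic. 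The delicate points --- where I expect the real work to lie --- are: bounding closure sizes uniformly enough that a single finite pair $(K(k),\varepsilon(k))$ serves for all $k$ rounds; the structure lemma that every bounded extension factors as a rigid extension followed by a safe one, so a newly played vertex is always either pinned down by the current closure or freely placeable; and bookkeeping the $n^{\varepsilon}$ budget so saturation is never exhausted. Granting \textbf{(A)} and \textbf{(B)}: for any first order sentence $\gamma$ of quantifier depth $k$, a.a.s.\ $G(n,n^{-\alpha})$ is $(K(k),\varepsilon)$-saturated and all such graphs agree on $\gamma$, so $\Prob[G(n,n^{-\alpha}) \models \gamma]$ tends to $0$ or to $1$ --- which is the zero-one law.
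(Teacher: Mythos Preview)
The paper does not give its own proof of this theorem; it is stated with a citation to Shelah--Spencer \cite{04} as an established result and serves only as background for the paper's actual contributions. Your sketch is a faithful outline of the Shelah--Spencer argument itself: for rational $\alpha$, Poisson convergence for the count of a strictly balanced subgraph at its threshold; for irrational $\alpha$, the $\alpha$-safe/$\alpha$-rigid extension calculus, closures of bounded size, and a closure-preserving Duplicator strategy in the Ehrenfeucht--Fra\"iss\'e game. As a high-level plan it is correct, and you have accurately identified the delicate steps (the uniform bound on closure sizes and the factorisation of an arbitrary bounded extension into a rigid part followed by a safe part) as the places where the substantive work lies. One small point worth making explicit in the easy direction: the existence, for every rational $\alpha\in(0,1)$, of a strictly balanced graph with density exactly $1/\alpha$ is itself a nontrivial fact (due to Ruci\'nski and Vince) and should be cited rather than asserted.
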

In \cite{01}, it was shown that when first order sentences of quantifier depth at most $k$, for a given positive integer $k$, are considered, $\left\{G(n, n^{-\alpha})\right\}_{n}$ satisfies the zero-one law for all $\alpha < (k-2)^{-1}$, and it fails to satisfy the zero-one law when $\alpha = (k-2)^{-1}$. This is stated in the following theorem:
\begin{theorem}\label{bounded_qd}
If $0 < \alpha < \frac{1}{k-2}$, then the random graph sequence $\left\{G(n, n^{-\alpha})\right\}_{n}$ satisfies the zero-one law for the set $\mathcal{F}_{k}$ of all first order sentences of quantifier depth of at most $k$; for $\alpha = \frac{1}{k-2}$, the random graph sequence $\left\{G(n, n^{-\alpha})\right\}_{n}$ does not satisfy the zero-one law.
\end{theorem}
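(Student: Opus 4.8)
The plan is to argue via the Ehrenfeucht--Fra\"iss\'e game: two graphs agree on every sentence of $\mathcal F_k$ exactly when Duplicator wins the $k$-round Ehrenfeucht game on them, so $\{G(n,n^{-\alpha})\}$ satisfies the zero--one law for $\mathcal F_k$ precisely when, for all integer sequences $n_i,m_i\to\infty$, Duplicator wins the $k$-round game on two independent copies $G(n_i,n_i^{-\alpha})$ and $G(m_i,m_i^{-\alpha})$ asymptotically almost surely. The governing quantity is the \emph{defect} $v(R,\bar H)-\alpha\,e(R,\bar H)$ of a rooted graph $(R,\bar H)$, where $v$ and $e$ count the vertices and edges of $R$ lying outside $\bar H$; the extension ``one new vertex joined to $k-2$ roots'' has defect $1-\alpha(k-2)$, which is positive exactly when $\alpha<\tfrac1{k-2}$ and vanishes at $\alpha=\tfrac1{k-2}$, and this dichotomy is the pivot of the whole proof.

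For $0<\alpha<\tfrac1{k-2}$ I would first establish, by first- and second-moment estimates based on $\E[\#R]\asymp n^{v(R)-\alpha e(R)}$, two a.a.s.\ properties of $G=G(n,n^{-\alpha})$: (a) $G$ has no ``small dense'' subgraph, i.e.\ no subgraph on boundedly many vertices whose defect (viewed as an unrooted extension of the empty graph) is $\le 0$, which in this range of $\alpha$ forces all the closures below to have bounded size; and (b) every rooted graph $(R,\bar H)$ of bounded complexity all of whose sub-extensions have \emph{strictly} positive defect --- an ``$\alpha$-safe pair'' --- is realised over every embedded copy of $\bar H$. Granting (a) and (b) for both $G(n_i,\cdot)$ and $G(m_i,\cdot)$, Duplicator plays the $k$-round game maintaining the invariant that after round $j$ the two chosen tuples, each enlarged by its closure under rigid (non-$\alpha$-safe) extensions of complexity $\le k-j$, induce isomorphic subgraphs: (a) keeps the closures bounded and ensures that before the last round Spoiler can realise only $\alpha$-safe extensions, which Duplicator copies by (b), while the closure invariant absorbs the final round. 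This yields a Duplicator win a.a.s., hence the zero--one law.

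At $\alpha=\tfrac1{k-2}$ the ``$(k-2)$-cherry'' $R^{*}$ has defect $0$; it is a strictly balanced rooted graph sitting exactly ``on the line'', so that for a fixed generic $(k-2)$-tuple of vertices the number of realisations of $R^{*}$ (its common neighbours) converges in law to a Poisson variable of some mean $\lambda>0$, uniformly over the many generic tuples --- a fact I would prove by the method of moments (or a Chen--Stein coupling). This is precisely the extension Duplicator can no longer keep matched, and the plan is to turn it into a failure of the zero--one law by exhibiting a depth-$k$ sentence $\phi$ --- built from $k-2$ existential quantifiers that locate a generic base together with a bounded number of further, in general alternating, quantifiers constraining how many realisations of $R^{*}$ that base carries --- for which $\lim_n\Prob[G(n,n^{-\alpha})\models\phi]$ exists and lies in $(0,1)$; equivalently, one exhibits sequences $n_i,m_i\to\infty$ along which Spoiler wins the $k$-round game with probability bounded away from $0$, using $k-2$ moves to build such a base in whichever graph has the larger common-neighbour count and the remaining two moves to display common neighbours the opponent cannot imitate.

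I expect the second regime to be the main obstacle: the sentence (or the Spoiler strategy) must genuinely detect the Poisson-scale fluctuation of $R^{*}$ within a budget of only $k$ quantifiers, even though the critical object is a \emph{rooted} extension whose base must be simultaneously generic and tightly controlled rather than a fixed small subgraph, and one must then verify that the limiting probability is strictly between $0$ and $1$ with the Poisson approximation made uniform. The first regime is technically lighter but still demands the careful bookkeeping of closures that preserves Duplicator's invariant through all $k$ rounds.
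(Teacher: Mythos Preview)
The paper does not prove Theorem~\ref{bounded_qd}; it is quoted as a known result from \cite{01} (Zhukovskii, \emph{Zero-one $k$-law}, Discrete Math.\ 2012), so there is no proof in the present paper to compare your proposal against.

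That said, your outline is broadly the right one and matches the methodology of \cite{01} and of Shelah--Spencer \cite{04}. For $0<\alpha<\tfrac{1}{k-2}$ your scheme---no small dense subgraphs, generic $\alpha$-safe extensions, and a closure-preserving Duplicator invariant---is exactly the standard machinery; the paper itself invokes precisely these ingredients (Theorem~\ref{safe_extension_copies}, Lemma~\ref{r_ext_suff_cond}) in later sections. Your flagged difficulty in the failure regime is genuine: at $\alpha=\tfrac{1}{k-2}$ there is no strictly balanced graph on $\le k$ vertices of density $k-2$ (the edge count $k(k-2)$ exceeds $\binom{k}{2}$ for $k\ge 4$), so one cannot simply cite Theorem~\ref{strictly_balanced_no._copies_thm} for a small subgraph; one really must exploit the critical rooted extension you describe. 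The actual argument in \cite{01} does proceed along these lines, but the construction of a depth-$k$ sentence whose limiting probability lies strictly in $(0,1)$ requires care beyond what your sketch provides---in particular, ``two remaining quantifiers to display common neighbours the opponent cannot imitate'' does not by itself separate Poisson values, since the mere existence (or non-existence) of a common neighbour over \emph{some} clique has limit $1$ (resp.\ may have limit $0$ or $1$) once one ranges over the $n^{\Theta(k)}$ available bases. The working construction couples the base to a larger pattern so that the resulting event is governed by a single Poisson count rather than a union over many bases; filling in that step is the substantive content of \cite{01}.
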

In this paper, we consider the next most natural question: what range of $\alpha$ in $(0,1)$ will allow $\left\{G(n, n^{-\alpha})\right\}_{n}$ to satisfy the zero-one law when we consider \emph{existential} first order sentences of bounded quantifier depth? Theorem~\ref{MAIN} gives the statement of the main result of this paper.

\begin{theorem}\label{MAIN}
For any positive integer $k$, let $\mathcal{E}_{k}$ denote the set of all existential first order sentences of quantifier depth at most $k$. Let $\alpha_{k}$ be the minimum $\alpha$ in $(0,1)$ such that $\left\{G\left(n, n^{-\alpha_{k}}\right)\right\}_{n}$ does not satisfy the zero-one law for $\mathcal{E}_{k}$. Then
\begin{equation}
\alpha_{k} = \left\{k - 2 - \Theta\left(\frac{1}{k^{2}}\right)\right\}^{-1} \text{ as } k \rightarrow \infty.
\end{equation}
For $k = 4$, we have $\alpha_{k} = \frac{7}{13}$.
\end{theorem}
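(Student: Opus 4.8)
The approach I would take follows the now-classical Ehrenfeucht-game machinery, but adapted to the existential fragment. The key observation is that the zero-one law for $\mathcal{E}_k$ over $\{G(n,n^{-\alpha})\}_n$ holds precisely when, for the "generic" behaviour of $G(n,n^{-\alpha})$, the existential-fragment Ehrenfeucht game of $k$ rounds (in which only the Spoiler's "existential" moves are relevant — i.e., Spoiler plays on the structure where the target sentence is false, or rather one adapts the game so that Duplicator must respond without being able to force Spoiler into a particular graph) can be won a.a.s. by Duplicator. Concretely, I would introduce the asymmetric Ehrenfeucht-type game $\EHR_k$ in which Spoiler points only to vertices and Duplicator must maintain a partial homomorphism-like map; the truth of existential sentences of depth $k$ is preserved by a Duplicator win. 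Then $\alpha_k$ is the threshold below which such a game is a.a.s. won, and above which some existential extension statement — "there exists a copy of a fixed small graph $H$" possibly combined with a few extra adjacency constraints — has limiting probability strictly between $0$ and $1$.

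**Upper and lower bounds via extension statements**

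\emph{Lower bound on $\alpha_k$} (equivalently, the zero-one law holds for $\alpha$ slightly below the claimed value): I would show that for $\alpha < \{k-2-t(k)\}^{-1}$, the generic first-order extension axioms restricted to the relevant $\alpha$-safe pairs (in the sense of Spencer-Shelah and the terminology flagged in the keywords) are a.a.s. satisfied, and that a Duplicator strategy in $\EHR_k$ can be built from these axioms exactly as in the proof of Theorem~\ref{bounded_qd}, except that Duplicator only needs to answer existential challenges, which relaxes the requirement and lets $\alpha$ go slightly above $(k-2)^{-1}$. The quantitative gain $t(k)=\Theta(k^{-2})$ should come from a careful accounting of how many "rounds' worth" of density can be saved because negative adjacency constraints need not be respected: roughly, in the worst-case extension chain one can afford one extra edge every $\sim k$ rounds for $\sim k$ of the rounds, yielding a correction of order $k^{-2}$ to the reciprocal density $k-2$.

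\emph{Upper bound on $\alpha_k$} (the zero-one law fails at the claimed threshold): here I would exhibit, for $\alpha = \{k-2-t(k)\}^{-1}$, a concrete existential sentence $\gamma$ of depth $k$ — asserting the existence of a particular strictly balanced graph on roughly $k$ vertices with exactly the critical density — for which $\Prob[G(n,n^{-\alpha}) \models \gamma]$ converges to a limit in $(0,1)$ (a Poisson-type count of copies). The graph must be chosen so that it is expressible with only $k$ nested existential quantifiers, which forces a tree-like or near-tree-like structure; optimizing the edge-to-vertex ratio of such an existentially-definable graph over all admissible shapes is what produces the $t(k)$ term. The main obstacle is precisely this optimization: one must characterize which small graphs are definable by depth-$k$ existential sentences (this is strictly more restrictive than arbitrary depth-$k$ first-order definability, since one cannot say "$x$ is \emph{not} adjacent to $y$" to pin down a graph), and then solve the resulting discrete optimization for the densest such graph. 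I expect the characterization to say that depth-$k$ existential definability corresponds to graphs built by a bounded "existential closure" process of depth $k$, and the extremal configuration to be a small explicitly-described gadget.

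**The case $k=4$**

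For $k=4$ the search space of existentially-definable graphs on a handful of vertices is small enough to enumerate by hand. I would list all graphs $H$ with at most $4$ vertices (more precisely, all $\alpha$-safe pairs relevant to a $4$-round existential game), compute for each the infimum of $\alpha$ at which the corresponding extension statement fails to have a degenerate limit, and take the minimum; the claim is that this minimum equals $\tfrac{7}{13}$, realized by a specific strictly balanced graph on $4$ vertices. The matching upper bound (zero-one law below $\tfrac{7}{13}$) then follows from verifying that for $\alpha<\tfrac{7}{13}$ all the finitely many relevant extension axioms are a.a.s. satisfied and assemble into a Duplicator strategy in $\EHR_4$. The hard part throughout is keeping the existential asymmetry of the game correctly reflected in both the strategy construction and the extremal-graph optimization; once that bookkeeping is set up, both the asymptotic statement and the $k=4$ computation reduce to estimates of subgraph counts in $G(n,n^{-\alpha})$ that are by now standard.
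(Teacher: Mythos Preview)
Your proposal has the right architecture---an Ehrenfeucht-game lower bound and a witness-sentence upper bound---but it rests on two misconceptions that would cause the argument to collapse.

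First, you write that one ``cannot say $x$ is \emph{not} adjacent to $y$'' in an existential sentence. This is false: the paper's definition of existential first-order logic allows negations in front of atomic formulas, so $\neg(x\sim y)$ is perfectly legal. What is forbidden is universal quantification (equivalently, negation applied to quantified subformulas). This matters because your intuition for why $\alpha_k>(k-2)^{-1}$ (``negative adjacency constraints need not be respected'') is not the actual mechanism.

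Second, and more seriously, you repeatedly assume that an existential sentence of depth $k$ can only pin down a graph on roughly $k$ vertices---you speak of ``a particular strictly balanced graph on roughly $k$ vertices'' for the upper bound, and for $k=4$ you propose to ``list all graphs $H$ with at most $4$ vertices'' and expect the extremizer to be ``a specific strictly balanced graph on $4$ vertices.'' This is wrong. Because existential quantifiers distribute over conjunction, a depth-$k$ existential sentence can assert the existence of a graph with arbitrarily many vertices, provided each conjunct mentions at most $k$ of them at once. The paper's upper-bound sentence $\varphi_k$ describes a structure with $\Theta(k^4)$ vertices (roots, ground vertices, first-level, second-level, and universal vertices), and the heart of the proof is a delicate case analysis (Lemmas~\ref{upper_bound_lem_1}--\ref{upper_bound_lem_3}) bounding the maximal density of \emph{every} graph in the finite family $\Sigma_k$ realizing $\varphi_k$, since vertices with the same adjacency pattern to the already-named vertices may or may not coincide. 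For $k=4$, the extremal graph $G_0$ has $21$ vertices and $39$ edges (density $13/7$), not $4$ vertices; ruling out all denser realizations of the depth-$4$ sentence occupies the bulk of Section~\ref{k=4} and the Appendix. Your enumeration over $4$-vertex graphs would not even get started on this, and no $4$-vertex graph has density $13/7$.
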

Since the set $\mathcal{E}_{k}$ is a fragment of $\mathcal{F}_{k}$, for $\alpha \in (0,1)$, if $\left\{G(n, n^{-\alpha})\right\}_{n}$ satisfies the zero-one law for $\mathcal{F}_{k}$, then it satisfies the zero-one law for $\mathcal{E}_{k}$ as well. Hence from Theorem~\ref{bounded_qd}, it follows that $\alpha_{k} \geq \frac{1}{k-2}$. One may even expect that $\alpha_{k}$ would be significantly larger than $\frac{1}{k-2}$. But Theorem~\ref{MAIN} shows that the order of magnitude, as $k \rightarrow \infty$, of the difference $\alpha_{k}-\frac{1}{k-2}$ is only $O(k^{-4})$, which is surprisingly small. In this sense, $\mathcal{F}_{k}$ is more expressive than $\mathcal{E}_{k}$, but not by much.\\

We give here a brief discussion of existing literature that deals with questions of a similar flavour. In \cite{kaufmann}, it was shown that there exists an existential monadic second order (EMSO) sentence $\psi$ over a vocabulary of $4$ binary relations and $9$ first-order variables such that, if, for each $n \in \mathbb{N}$, we denote by $\mu_{n}(\psi)$ the fraction of finite models with domain $\{0, 1, \ldots, n-1\}$ for which $\psi$ holds, then the asymptotic probability $\mu(\psi) = \displaystyle \lim_{n \rightarrow \infty} \mu_{n}(\psi)$ does not exist, establishing the failure of zero-one laws for EMSO logic. In \cite{lebars_0-1_fails_EMSO}, this was improved by establishing the existence of an EMSO sentence $\psi'$ on undirected graphs, over a vocabulary comprising only $1$ binary relation, which has no asymptotic probability. In \cite{monica}, denoting by $L_{\infty, \omega}^{k}$ the logic comprising arbitrarily many conjunctions and disjunctions but at most $k$ distinct variables, it was shown that the random graph sequence $\left\{G(n, n^{-\alpha})\right\}_{n}$ satisfies the zero-one law for $L_{\infty, \omega}^{k}$ when $k = \lceil 1/\alpha \rceil$, whereas $\left\{G(n, n^{-\alpha})\right\}_{n}$ fails to satisfy convergence laws for $L_{\infty, \omega}^{k+1}$ if $\alpha = 1/m$ for $m > 2$. Finally, in \cite{raza_maksim}, it was shown that for every $k \in \mathbb{N}$ and $\epsilon > 0$, there exists $\alpha \in \left((k-1)^{-1}, (k-1)^{-1}+\epsilon\right)$ such that $\left\{G(n, n^{-\alpha})\right\}_{n}$ does not satisfy the zero-one law for first order sentences with $k$ variables, whereas it does satisfy the zero-one law for this logic for $\alpha \in (0, (k-1)^{-1}]$.\\

In Section~\ref{tools}, we describe several definitions, tools and results from the literature that are to be used in the proof of Theorem~\ref{MAIN}, in Sections~\ref{upper} and \ref{lower} respectively we discuss our derivation of the upper and lower bounds on $\alpha_{k}$ as $k \rightarrow \infty$, and in Section~\ref{k=4}, we discuss the derivation of the exact value of $\alpha_{4}$ (part of the proof that $\alpha_{4} = \frac{7}{13}$ is in Appendix, Section~\ref{Appendix}).  

\section{Tools and results used in our paper}\label{tools}
We start by describing a suitable version of the well-known \emph{Ehrenfeucht-Fra\"{i}sse} games used to analyse existential first order sentences on graphs. For $n \in \mathbb{N}$, let $[n] = \left\{1, \ldots, n\right\}$.
\begin{defn}\label{EHR_existential}
Given two graphs $G$ and $H$ and a positive integer $k$, the \emph{existential Ehrenfeucht game} of $k$ rounds, denoted $\EHR[G, H, k]$, is played by Spoiler and Duplicator as follows. At the very beginning of the game, Spoiler chooses one of the graphs $G$ and $H$. Without loss of generality, assume that he chooses $G$. Each of the $k$ rounds consists the choice of a vertex from $G$ by Spoiler, followed by the choice of a vertex from $H$ by Duplicator. Suppose $x_{i}$ is the vertex selected from $G$ and $y_{i}$ that from $H$ in round $i$, for $i \in [k]$. Duplicator wins if \emph{all} of the following conditions hold: for all $i, j \in [k]$,
\begin{enumerate}
\item $x_{i} = x_{j} \Leftrightarrow y_{i} = y_{j}$;
\item $x_{i} \sim x_{j} \Leftrightarrow y_{i} \sim y_{j}$.
\end{enumerate}
\end{defn}
We define the relation $\sim_{k}$ as follows: given two graphs $G$ and $H$, we say $G \sim_{k} H$ if Duplicator wins $\EHR[G, H, k]$. This is an equivalence relation which partitions the space of all graphs into \emph{finitely} many equivalence classes (see [Section 2.2, \cite{strange}] and [\cite{libkin}, Lemma~3.13]). The following well-known theorem states the connection between existential Ehrenfeucht games and existential first order sentences of bounded quantifier depth (see \cite{eh_1960}, [\cite{libkin}, Theorem~3.9], \cite{immerman} and \cite{marker}).
\begin{theorem}\label{EHR_importance}
Given any two graphs $G$ and $H$ and any $k \in \mathbb{N}$, $G \sim_{k} H$ if and only if, for any existential first order sentence $\gamma$ of quantifier depth at most $k$, either both $G \models \gamma$ and $H \models \gamma$, or both $G \models \neg \gamma$ and $H \models \neg \gamma$, i.e.\ $G$ and $H$ have the same truth value for \emph{all} existential first order sentences of quantifier depth at most $k$.
\end{theorem}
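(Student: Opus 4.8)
\textbf{Proof strategy for Theorem~\ref{EHR_importance}.}

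The plan is to prove the two directions of the biconditional separately, using the standard inductive correspondence between winning strategies in Ehrenfeucht-type games and definability, but adapted to the \emph{existential, positive} fragment and to the extra feature of Definition~\ref{EHR_existential} in which Spoiler commits to one of the two graphs before play begins. The key technical device is a relativized family of quantities: for a graph $G$ with a tuple $\bar a = (a_1,\dots,a_r)$ of marked vertices and an integer $m$, let $\operatorname{etp}^m_\exists(G,\bar a)$ denote the ``existential $m$-type'' of $(G,\bar a)$, i.e.\ the set of all existential formulas $\varphi(x_1,\dots,x_r)$ of quantifier depth at most $m$ such that $G \models \varphi(\bar a)$. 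Since up to logical equivalence there are only finitely many such formulas (they are built from finitely many atomic formulas using $\wedge,\vee$ and at most $m$ nested $\exists$'s), each such type can be represented by a single existential formula $\tau^m_{G,\bar a}(x_1,\dots,x_r)$ of quantifier depth at most $m$ --- the conjunction of a finite set of representatives. I will prove, by induction on $m$, the following sharpened equivalence: Spoiler has a strategy to win the $m$-round game played on $(G,\bar a)$ versus $(H,\bar b)$ \emph{starting from $G$} if and only if there is an existential formula $\varphi$ of quantifier depth at most $m$ with $G \models \varphi(\bar a)$ but $H \not\models \varphi(\bar b)$; and symmetrically with $G,H$ interchanged. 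The theorem as stated (with $r = 0$, empty tuples) follows by taking the disjunction over Spoiler's two opening choices: $G \not\sim_k H$ iff Spoiler wins from $G$ or from $H$, iff some existential sentence of depth at most $k$ separates them in one direction or the other; negating gives exactly the claimed statement.

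For the base case $m = 0$, no moves are made, and a depth-$0$ existential formula in the variables $\bar x$ is a positive Boolean combination of atomic formulas $x_i = x_j$ and $x_i \sim x_j$; such a formula separates $(G,\bar a)$ from $(H,\bar b)$ in the direction ``$G$ yes, $H$ no'' precisely when the quantifier-free atomic diagram fails to transfer, which is exactly Spoiler's winning condition being violated for Duplicator --- here one must be slightly careful that \emph{positive} formulas suffice because a mismatch in either an equality or an adjacency, in either direction, yields a positive separating formula after possibly swapping which graph is called $G$. For the inductive step, suppose the claim holds for $m-1$. If Spoiler wins the $m$-round game from $G$, his first move is some $a_{r+1} \in V(G)$; Duplicator's reply $b_{r+1} \in V(H)$ then leaves an $(m-1)$-round game from $G$ on the extended tuples, which Spoiler still wins, so by the induction hypothesis there is a depth-$(m-1)$ existential formula $\psi_{b_{r+1}}(\bar x, x_{r+1})$ with $G \models \psi_{b_{r+1}}(\bar a, a_{r+1})$ and $H \not\models \psi_{b_{r+1}}(\bar b, b_{r+1})$; setting $\varphi(\bar x) = \exists x_{r+1}\, \bigwedge_{b \in V(H)} \psi_{b}(\bar x, x_{r+1})$ (a finite conjunction up to equivalence, since only finitely many depth-$(m-1)$ existential formulas exist) gives a depth-$m$ existential formula true at $(G,\bar a)$; and it is false at $(H,\bar b)$ because any witness $b_{r+1}$ fails the conjunct indexed by itself. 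Conversely, if such a separating $\varphi$ exists, one pushes negation inward (legal since $\varphi$ is in the existential-positive fragment, so $\neg\varphi$ is a \emph{universal} formula and Duplicator playing against it faces a choice for every witness) to extract Spoiler's move: writing $\varphi$ as a disjunction of formulas $\exists x_{r+1}\,\chi_j$, the graph $G$ satisfies one disjunct, say via witness $a_{r+1}$, and $H$ falsifies every disjunct, so for Duplicator's arbitrary reply $b_{r+1}$ the formula $\chi_j(\bar x, x_{r+1})$ of depth $\le m-1$ separates the extended tuples in the right direction, and the induction hypothesis finishes it.

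The main obstacle is bookkeeping the interaction between the \emph{positivity} restriction and the \emph{directionality} of separation: in the full (non-existential) Ehrenfeucht game the separating formula can be taken in either direction symmetrically because the logic is closed under negation, but here $\mathcal{E}_k$ is not, so one must track which of the two graphs Spoiler selected and be disciplined about the fact that a depth-$m$ existential formula true in $G$ and false in $H$ corresponds specifically to Spoiler winning \emph{from $G$}. The clean way to handle this is exactly the sharpened inductive statement above, keeping the direction as part of the invariant, and only at the very end collapsing the two directions via Spoiler's free opening choice; I will also want a short lemma recording that for each $r$ and $m$ there are, up to logical equivalence, only finitely many existential formulas of quantifier depth at most $m$ in $r$ free variables --- standard (cf.\ [\cite{libkin}, Lemma~3.13] and [Section 2.2, \cite{strange}]), and needed to make the conjunctions and disjunctions in the construction genuinely first-order. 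Everything else is a routine induction, and I would present it compactly, emphasizing only the two places (the base case's positivity argument and the inductive step's conjunction-over-all-of-$V(H)$ trick) where the existential setting differs from the classical one.
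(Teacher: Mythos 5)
Your overall plan is the standard one — and, for what it is worth, the paper itself gives no proof of this statement but simply cites Ehrenfeucht, Libkin, Immerman and Marker, whose textbook argument is precisely the direction-tracking induction you set up: relate ``Spoiler wins the $m$-round game from $G$ on marked tuples'' to the existence of an existential formula of depth at most $m$ true at $(G,\bar a)$ and false at $(H,\bar b)$, then combine Spoiler's two opening choices. The inductive step in the forward direction (the formula $\exists x_{r+1}\bigwedge_{b\in V(H)}\psi_b$, with finiteness of depth-$(m-1)$ formulas up to equivalence) is correct as written.

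The genuine problem is your handling of the quantifier-free level, the one place you yourself flag as delicate. You describe the fragment as existential-\emph{positive}, take depth-$0$ formulas to be positive Boolean combinations of atoms, and patch the base case by ``a positive separating formula after possibly swapping which graph is called $G$''. First, this misreads Definition of the fragment in the paper: negations are allowed in front of atomic formulas, so literals such as $\neg(x_i\sim x_j)$ and $\neg(x_i=x_j)$ are available, and it is exactly this that matches the two-sided winning condition of Definition~\ref{EHR_existential}. Second, and more seriously, for the purely positive fragment your sharpened direction-specific claim is false: with zero rounds, $a_1\nsim a_2$ in $G$ but $b_1\sim b_2$ in $H$, Spoiler playing from $G$ has already won, yet every positive formula true at $(G,\bar a)$ is true at $(H,\bar b)$. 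The ``swap the graphs'' patch only yields the undirected statement that \emph{some} formula separates in \emph{some} direction, and that weakened invariant cannot feed your inductive step, since $\exists x_{r+1}\bigwedge_{b\in V(H)}\psi_b$ requires every $\psi_b$ to be true on the $G$-side and false on the $H$-side. The repair is immediate: carry literals (atoms and negated atoms) through the whole induction, so the base case reads ``the partial map fails to be a partial isomorphism iff some literal holds at $\bar a$ and fails at $\bar b$'', with no swapping anywhere. One smaller inaccuracy in your converse step: an existential formula of depth $m$ is not in general equivalent, within the same depth, to a disjunction of single formulas $\exists x_{r+1}\chi_j$ (e.g.\ $\exists x\,\psi_1\wedge\exists x\,\psi_2$); instead write it as a disjunction of conjunctions of literals and $\exists$-formulas, note $G$ satisfies some disjunct while $H$ falsifies one of its conjuncts, and let Spoiler play a witness of that failing $\exists$-conjunct (if a literal conjunct fails, Duplicator has already lost). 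With these corrections your argument is the standard, correct proof.
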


\begin{corollary}
Duplicator wins $\EHR\left[G_{1}, G_{2}, k\right]$ a.a.s.\ for $G_{1} \sim G\left(m, p(m)\right)$ and $G_{2} \sim G\left(n, p(n)\right)$ that are independent of each other, as $m, n \rightarrow \infty$, if and only if $\left\{G\left(n, p(n)\right)\right\}_{n}$ satisfies the zero-one law for existential first order logic of quantifier depth $k$.
\end{corollary}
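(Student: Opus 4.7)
The plan is to translate the corollary into a statement about the distribution of $G(n, p(n))$ among the equivalence classes of $\sim_{k}$, and then apply Theorem~\ref{EHR_importance}. Recall that Theorem~\ref{EHR_importance} yields $G_{1} \sim_{k} G_{2}$ if and only if $G_{1}$ and $G_{2}$ agree on every $\gamma \in \mathcal{E}_{k}$; in particular, any two distinct $\sim_{k}$-classes are separated by some $\gamma \in \mathcal{E}_{k}$, and there are only finitely many classes $C_{1}, \ldots, C_{N}$.

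For the ``only if'' direction, fix $\gamma \in \mathcal{E}_{k}$ and let $a_{n} := \Prob[G(n, p(n)) \models \gamma]$. The event $\{G_{1} \models \gamma,\ G_{2} \not\models \gamma\}$ forces Duplicator to lose (by Theorem~\ref{EHR_importance}), and has probability $a_{m}(1 - a_{n})$ by independence, which must therefore tend to $0$ as $m, n \to \infty$; symmetrically, $(1 - a_{m}) a_{n} \to 0$. Setting $m = n$ shows $a_{n}(1 - a_{n}) \to 0$, so each $a_{n}$ lies arbitrarily close to $\{0, 1\}$ for large $n$; and if $a_{m_{j}} \to 0$ along one subsequence while $a_{n_{j}} \to 1$ along another, then $(1 - a_{m_{j}}) a_{n_{j}} \to 1$, contradicting the second cross inequality. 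Hence $\lim_{n} a_{n}$ exists and belongs to $\{0, 1\}$, yielding the zero-one law.

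For the ``if'' direction, set $\pi_{i}(n) := \Prob[G(n, p(n)) \in C_{i}]$. It suffices to produce a single index $i^{*}$ with $\pi_{i^{*}}(n) \to 1$, since then independence gives $\Prob[G_{1} \in C_{i^{*}},\ G_{2} \in C_{i^{*}}] = \pi_{i^{*}}(m) \pi_{i^{*}}(n) \to 1$, on which event Duplicator wins. To find $i^{*}$, pass by compactness of the finite simplex to a subsequence along which $\pi_{i}(n) \to \lambda_{i}$ for every $i$; then $\sum_{i} \lambda_{i} = 1$, and for each $\gamma \in \mathcal{E}_{k}$ the zero-one law forces $\sum_{i: C_{i} \models \gamma} \lambda_{i} \in \{0, 1\}$. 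If two distinct classes $C_{i}, C_{j}$ both had $\lambda_{i}, \lambda_{j} > 0$, then any sentence separating them would produce a value strictly between $0$ and $1$ for this sum, a contradiction; so exactly one $\lambda_{i^{*}}$ equals $1$. The index $i^{*}$ is pinned down by the (subsequence-independent) set $\{\gamma \in \mathcal{E}_{k} : \lim_{n} \Prob[G(n, p(n)) \models \gamma] = 1\}$, so the same $i^{*}$ emerges from every convergent subsequence, giving $\pi_{i^{*}}(n) \to 1$ along the full sequence.

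The main obstacle is the ``if'' direction: extracting a single dominant equivalence class from the zero-one law. This uses three ingredients together—finiteness of $\sim_{k}$-classes, subsequential compactness on a finite simplex, and the separation of distinct classes by some $\gamma \in \mathcal{E}_{k}$ (the nontrivial content of Theorem~\ref{EHR_importance}). By contrast, the ``only if'' direction amounts to a short real-variable argument once the game-to-sentence correspondence is applied.
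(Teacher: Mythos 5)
Your proof is correct, and both directions are carefully argued. The paper states this corollary without proof as a routine consequence of Theorem~\ref{EHR_importance} together with the finiteness of the number of $\sim_{k}$-classes, and your argument is a faithful fleshing-out of exactly that standard reasoning: the ``only if'' direction converts a sentence on which $G_{1}$ and $G_{2}$ disagree into a witness that $G_{1}\not\sim_{k}G_{2}$ (hence Duplicator loses), and the cross-term estimates $a_{m}(1-a_{n})\to 0$, $(1-a_{m})a_{n}\to 0$ plus the diagonal $m=n$ give existence of the limit in $\{0,1\}$; the ``if'' direction isolates a single dominant $\sim_{k}$-class $C_{i^{*}}$ using finiteness of the class partition, compactness of the simplex, the separation of distinct classes by sentences of $\mathcal{E}_{k}$, and the fact that the limiting ``truth set'' $\{\gamma\in\mathcal{E}_{k}:\lim_{n}\Prob[G(n,p(n))\models\gamma]=1\}$ does not depend on the subsequence. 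One small remark: in the ``if'' direction, compactness is convenient but not strictly necessary -- since the zero-one law already provides, for each of the finitely many separating sentences $\gamma_{ij}$, a definite limit in $\{0,1\}$, one can directly intersect the corresponding a.a.s.\ events to pin down the dominant class; but your subsequential argument is equally rigorous and arguably cleaner.
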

Let us now switch to some very helpful results describing distributions of small subgraphs inside the random graph.
\begin{defn}
Let $H$ and $G$ be graphs on vertex sets $\left\{x_{1}, \ldots, x_{k}\right\}$ and $\left\{x_{1}, \ldots, x_{\ell}\right\}$ respectively with $H \subset G$. A graph $\widetilde{G}$ on $\left\{\widetilde{x}_{1}, \ldots, \widetilde{x}_{\ell}\right\}$ is a \emph{strict $(G, H)$-extension} of a graph $\widetilde{H}$ on $\left\{\widetilde{x}_{1}, \ldots, \widetilde{x}_{k}\right\}$ if $\widetilde{H} \subset \widetilde{G}$ and $\{x_{i}, x_{j}\} \in E(G) \setminus E(H)$ iff $\left\{\widetilde{x}_{i}, \widetilde{x}_{j}\right\} \in E\left(\widetilde{G}\right) \setminus E\left(\widetilde{H}\right)$ for all $i, j \in [\ell]$.

\end{defn}
Setting $v(G, H) = |V(G) \setminus V(H)|$ and $e(G, H) = |E(G) \setminus E(H)|$, for $\alpha > 0$, we define
\begin{equation}
f_{\alpha}(G, H) = v(G, H) - \alpha e(G, H).
\end{equation}
We define the pair $(G, H)$ to be \emph{$\alpha$-safe} if for every $H \subset S \subseteq G$, we have $f_{\alpha}(S, H) > 0$. 

Let $\widetilde{H} = \left\{\widetilde{x}_{1}, \ldots, \widetilde{x}_{k}\right\}$ be a subset of the vertex set $[n]$ of $G(n, p(n))$. For each subset $W$ of $[n] \setminus \widetilde{H}$ of cardinality $\ell-k$, if we can enumerate the vertices of $W$ as $\widetilde{x}_{k+1}, \ldots, \widetilde{x}_{\ell}$ such that the induced subgraph on $W \cup \widetilde{H}$ is a strict $(G, H)$-extension of the induced subgraph on $\widetilde{H}$, we set $\mathbf{1}_{W} = 1$; otherwise $\mathbf{1}_{W} = 0$. We define the random variable
\begin{equation}
N_{(G, H)}\left(\widetilde{x}_{1}, \ldots, \widetilde{x}_{k}\right) = \sum_{W \subset \mathscr{V}_{n} \setminus \widetilde{H}, |W| = \ell-k} \mathbf{1}_{W}.
\end{equation}
\begin{theorem}\label{safe_extension_copies}
[See \cite{02} and \cite{03}] Suppose we are given graphs $G$ and $H$ and $\alpha > 0$ such that the pair $(G, H)$ is $\alpha$-safe. Then for any $\epsilon > 0$, 
\begin{multline}
\lim_{n \rightarrow \infty} \Prob\Big[\left|N_{(G, H)}\left(\widetilde{x}_{1}, \ldots, \widetilde{x}_{k}\right) - \E\left[N_{(G, H)}\left(\widetilde{x}_{1}, \ldots, \widetilde{x}_{k}\right)\right]\right| \leq \epsilon \E\left[N_{(G, H)}\left(\widetilde{x}_{1}, \ldots, \widetilde{x}_{k}\right)\right] \\ \forall \left\{\widetilde{x}_{1}, \ldots, \widetilde{x}_{k}\right\} \subset \mathscr{V}_{n} \text{ in } G(n, n^{-\alpha})\Big] = 1.
\end{multline}
Moreover, we have, for all $\widetilde{x}_{1}, \ldots, \widetilde{x}_{k} \in \mathscr{V}_{n}$, 
\begin{equation}
\E\left[N_{(G, H)}\left(\widetilde{x}_{1}, \ldots, \widetilde{x}_{k}\right)\right] \sim \Theta\left(n^{f_{\alpha}(G, H)}\right).
\end{equation}

\end{theorem}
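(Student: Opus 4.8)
The plan is to establish the two displayed assertions separately: the asymptotics of $\E[N_{(G,H)}(\widetilde x_1,\dots,\widetilde x_k)]$ by a direct first moment computation, and the uniform concentration by a moment method in which the $\alpha$-safety hypothesis does essentially all the work.

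First I would compute the expectation. Put $v = v(G,H)$ and $e = e(G,H)$, and note that a set $W$ of size $v$ can be enumerated so as to make $W \cup \widetilde H$ a strict $(G,H)$-extension of the induced graph on $\widetilde H$ with probability $p^{e}(1-p)^{m}$, where $m = O(1)$ is the number of pairs among the new vertices (and between them and $\widetilde H$) that are required to be non-edges; since $p = n^{-\alpha} \to 0$, $(1-p)^{m} = 1 + o(1)$, and the number of choices of $W$ together with an admissible enumeration is $\Theta(n^{v})$, with the implied constants depending only on the (finitely many possible) induced graphs on the roots. Summing over $W$ gives $\E[N_{(G,H)}(\widetilde x)] = \Theta\!\left(n^{v}\, n^{-\alpha e}\right) = \Theta\!\left(n^{f_{\alpha}(G,H)}\right)$ uniformly in $\widetilde x_1,\dots,\widetilde x_k$, which is the second displayed claim.

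Next, for a fixed root tuple I would bound the variance by the standard second moment expansion $\E[N^{2}] = \sum_{W,W'}\Prob[\mathbf 1_{W}=1,\ \mathbf 1_{W'}=1]$, grouping the pairs $(W,W')$ by the isomorphism type, relative to the roots, of their common part: there is a graph $S$ with $H \subseteq S \subseteq G$ such that the vertices of $W \cap W'$ together with the new edges they carry form a strict $(S,H)$-extension of the induced graph on $\widetilde H$. There are $\Theta(n^{2v - v(S,H)})$ such pairs and the joint probability is $\Theta(n^{-\alpha(2e - e(S,H))})$ (again up to a factor $1+o(1)$), so the contribution of type $S$ is $\Theta\!\left(n^{\,2 f_{\alpha}(G,H) - f_{\alpha}(S,H)}\right)$. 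The type $S = H$ is exactly the (asymptotically independent) disjoint contribution and sums to $(1+o(1))\E[N]^{2}$; for every $S$ with $H \subsetneq S \subseteq G$, the $\alpha$-safety of $(G,H)$ gives $f_{\alpha}(S,H) \ge c$ for some constant $c = c(G,H,\alpha) > 0$, so that contribution is $O(n^{2 f_{\alpha}(G,H) - c}) = o(\E[N]^{2})$. Summing over the finitely many types, $\mathrm{Var}(N) = o(\E[N]^{2})$, whence by Chebyshev $\Prob[\,|N - \E N| > \epsilon\, \E N\,] = O(n^{-c})$ for each fixed root tuple.

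The main obstacle is to make this hold simultaneously for all $\Theta(n^{k})$ root tuples: a union bound over Chebyshev's $O(n^{-c})$ estimate is useless once $c \le k$. To repair this I would pass to higher centered moments: expanding $\E\!\left[(N - \E N)^{2r}\right] = \sum_{W_{1},\dots,W_{2r}} \E\!\left[\prod_{i}(\mathbf 1_{W_{i}} - \E \mathbf 1_{W_{i}})\right]$, every term in which some $W_{i}$ is vertex-disjoint from all the others vanishes by centering, so only terms whose overlap graph on $\{W_{1},\dots,W_{2r}\}$ has minimum degree at least $1$ survive. Repeating the two-set estimate inside each overlap cluster of size $s$ shows its exponent is at most $s\, f_{\alpha}(G,H) - (s-1)c$, so a surviving term with cluster sizes $s_{1},\dots,s_{t}$ (each $\ge 2$, hence $t \le r$) has exponent at most $2r\, f_{\alpha}(G,H) - (2r - t)c \le 2r\, f_{\alpha}(G,H) - r c$; thus $\E[(N - \E N)^{2r}] = O\!\left(\E[N]^{2r}\, n^{-rc}\right)$. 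Choosing a fixed $r$ with $rc > k$ and applying Markov's inequality to the $2r$-th moment gives $\Prob[\,|N - \E N| > \epsilon\, \E N\,] = O(n^{-rc}) = o(n^{-k})$ for each root tuple, and a union bound over the $\Theta(n^{k})$ choices of $\{\widetilde x_{1},\dots,\widetilde x_{k}\}$ completes the proof. The delicate bookkeeping — that the disjoint terms in the moment expansion reassemble $(\E N)^{2r}$ with only a lower-order error, and that no multi-set overlap pattern can beat the per-cluster bound — in each case comes down to iterating the single inequality $f_{\alpha}(S,H) > 0$ supplied by $\alpha$-safety.
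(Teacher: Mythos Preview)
The paper does not prove this theorem; it is quoted from the literature (Alon--Spencer, \emph{The Probabilistic Method}, and Spencer, \emph{Counting extensions}) and used as a black box. Your sketch is essentially a correct reconstruction of Spencer's higher-moment argument: the first-moment computation is straightforward, the second-moment/variance bound via $\alpha$-safety is the right mechanism, and --- crucially --- you correctly identify that Chebyshev alone cannot survive a union bound over $\Theta(n^{k})$ root tuples, so you boost to $2r$-th centered moments and use that surviving terms must have every $W_i$ overlapping some other $W_j$, which forces an exponent loss of at least $c$ per overlap and hence a total loss of at least $rc$.

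One point deserves more care than you give it. When you say ``there is a graph $S$ with $H\subseteq S\subseteq G$ such that the vertices of $W\cap W'$ together with the new edges they carry form a strict $(S,H)$-extension,'' you are tacitly reading the overlap from $W$'s enumeration; the same vertices may play different roles in $W'$'s enumeration, so the overlap type is really a pair of sub-embeddings, and the number of \emph{shared} edges is at most $e(S,H)$ rather than exactly $e(S,H)$. This only helps your upper bound on the joint probability, so the inequality $\mathrm{contribution}(S)=O\!\left(n^{2f_\alpha(G,H)-f_\alpha(S,H)}\right)$ survives; but the analogous subtlety recurs in the higher-moment step, where ``repeating the two-set estimate inside each overlap cluster'' needs the observation that when $W_i$ is added to an existing cluster, the part of $W_i$ already covered corresponds (from $W_i$'s enumeration) to some $T$ with $H\subsetneq T\subseteq G$, whence the fresh contribution has exponent $f_\alpha(G,T)=f_\alpha(G,H)-f_\alpha(T,H)\le f_\alpha(G,H)-c$. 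With that clarification your cluster bound $s\,f_\alpha(G,H)-(s-1)c$ is justified, and the rest of the argument goes through.
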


\begin{defn}[Alice's restaurant property]\label{r_ext}
For $r \in \mathbb{N}$, graph $G$, $a, b \in \mathbb{N}_{0}$ with $a+b \leq r$, distinct vertices $x_{1}, \ldots, x_{a}, y_{1}, \ldots, y_{b}$, we call a vertex $v$, distinct from all $x_{i}$'s and $y_{j}$'s, an \emph{$(a,b)$-witness with respect to $\left(\left\{x_{1}, \ldots, x_{a}\right\}, \left\{y_{1}, \ldots, y_{b}\right\}\right)$} if $v \sim x_{i}$ for all $i \in [a]$ and $v \nsim y_{j}$ for all $j \in [b]$. A graph $G$ satisfies the \emph{full level-$r$ extension property}, sometimes referred to as \emph{Alice's restaurant property}, if for \emph{every} $a, b \in \mathbb{N}_{0}$ with $a+b \leq r$ and \emph{every} distinct $x_{1}, \ldots, x_{a}, y_{1}, \ldots, y_{b}$, there exists an $(a,b)$-witness with respect to $\left(\left\{x_{1}, \ldots, x_{a}\right\}, \left\{y_{1}, \ldots, y_{b}\right\}\right)$ in $G$.
\end{defn}

We state here a useful lemma that shows that given $r \in \mathbb{N}$, the random graph sequence $\left\{G(n, n^{-\alpha})\right\}_{n}$ a.a.s.\ has the full level-$r$ extension property for all sufficiently small $\alpha$. See [\cite{spencer_spectra}, Theorem 1.7] for a proof of this fact. 

\begin{lemma}\label{r_ext_suff_cond}
For any positive integer $r$, the random graph sequence $\left\{G(n, n^{-\alpha})\right\}_{n}$ a.a.s.\ has the full level-$r$ extension property whenever $\alpha < \frac{1}{r}$.
\end{lemma}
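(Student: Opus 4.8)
The plan is to prove this by a routine first-moment computation combined with a union bound, the point being that the hypothesis $\alpha<\frac1r$ makes the expected number of witnesses for any fixed configuration grow like a positive power of $n$. Write $p=n^{-\alpha}$. First I would fix $a,b\in\mathbb{N}_0$ with $a+b\le r$ and fix distinct vertices $x_1,\dots,x_a,y_1,\dots,y_b$ in $[n]$. For any other vertex $v$, the probability that $v$ is an $(a,b)$-witness with respect to $(\{x_i\},\{y_j\})$ is exactly $p^a(1-p)^b$, and over the $n-a-b$ candidate vertices $v$ these events are mutually independent, since they are determined by pairwise disjoint sets of potential edges. Hence the probability that \emph{no} $(a,b)$-witness exists for this configuration equals $\bigl(1-p^a(1-p)^b\bigr)^{n-a-b}$.

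Next I would bound this probability uniformly over configurations. Since $b\le r$ is fixed and $p\to0$, we have $(1-p)^b\ge 1-rp\ge\frac12$ for all large $n$, so $p^a(1-p)^b\ge\frac12 n^{-\alpha a}$ with a bound depending only on $a,b,n$ and not on the chosen vertices. Using $1-t\le e^{-t}$,
\begin{equation*}
\bigl(1-p^a(1-p)^b\bigr)^{n-a-b}\le\exp\!\Bigl(-(n-a-b)\tfrac12 n^{-\alpha a}\Bigr)\le\exp\!\bigl(-c\,n^{1-\alpha a}\bigr)
\end{equation*}
for a suitable constant $c>0$ and all large $n$. Because $a\le r$ and $\alpha<\frac1r$, we have $1-\alpha a\ge 1-\alpha r=:\delta>0$, so each such ``no witness'' probability is at most $\exp(-c\,n^{\delta})$, which decays faster than any power of $n$.

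Finally I would union-bound over all bad configurations: there are at most $r^2$ choices of the pair $(a,b)$ with $a+b\le r$, and for each at most $n^{a+b}\le n^{r}$ ordered tuples of distinct vertices $(x_1,\dots,x_a,y_1,\dots,y_b)$. Thus the probability that $G(n,n^{-\alpha})$ fails the full level-$r$ extension property is at most $r^2\,n^{r}\exp(-c\,n^{\delta})\to0$ as $n\to\infty$, which gives the claim. There is no substantive obstacle here; the only bookkeeping that requires care is that the estimate $p^a(1-p)^b\ge\frac12 n^{-\alpha a}$ and the exponent $1-\alpha a$ be controlled uniformly in the configuration, and that $\delta=1-\alpha r$ be strictly positive — which is precisely where the strict inequality $\alpha<\frac1r$ (rather than $\alpha\le\frac1r$) is used.
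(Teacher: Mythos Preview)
Your argument is correct and is exactly the standard first-moment plus union-bound proof of this well-known fact. The paper does not give its own proof here but merely cites Spencer's \emph{Threshold spectra via the Ehrenfeucht game}, Theorem~1.7, where essentially the same computation appears; the only harmless slip in your write-up is that the number of pairs $(a,b)\in\mathbb{N}_0^2$ with $a+b\le r$ is $\binom{r+2}{2}\le(r+1)^2$ rather than $r^2$, but any constant depending only on $r$ suffices in the union bound.
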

Thus for $\alpha < \frac{1}{r}$, Duplicator a.a.s.\ wins $\EHR\left[G_{1}, G_{2}, r+1\right]$ for $G_{1} \sim G\left(m, p(m)\right)$ and $G_{2} \sim G\left(n, p(n)\right)$, independent of each other, as $m, n \rightarrow \infty$.\\

To prove that, for given $k \in \mathbb{N}$ and suitable $\alpha_{k}$, $\left\{G\left(n, n^{-\alpha_{k}}\right)\right\}_{n}$ does not satisfy the zero-one law for existential first order sentences of quantifier depth $k$, we come up with a sentence of quantifier depth $k$ that implies the existence of a small, suitable (in some sense) fixed graph as an induced subgraph in $G\left(n, n^{-\alpha_{k}}\right)$. For this, we require some tools from random graph theory describing the asymptotic behaviour of the number of copies of a given finite graph as an induced subgraph in $G\left(n, n^{-\alpha_{k}}\right)$ as $n \rightarrow \infty$. This motivates the quantities defined below.

For a finite graph $G$ with $|V(G)| = v$ and $|E(G)| = e$, we call $\rho(G) = \frac{e}{v}$ the \emph{density} of the graph. We define the \emph{maximal density} of a finite $G$, denoted $\maxden(G)$, as the maximum of $\rho(H)$ over all subgraphs $H$ of $G$. A finite $G$ is called \emph{balanced} if $\rho(H) \leq \rho(G)$ for every subgraph $H$ of $G$; it is called \emph{strictly balanced} if $\rho(H) < \rho(G)$ for every proper subgraph $H$ of $G$; it is called \emph{unbalanced} otherwise. Clearly $\maxden(G) = \rho(G)$ for strictly balanced $G$.  

The next two theorems discuss the asymptotic probability of $G(n, p(n))$ containing a given, finite graph as an induced subgraph -- whereas Theorem~\ref{strictly_balanced_no._copies_thm} discusses the case where the given graph is strictly balanced, Theorem~\ref{unbalanced_no._copies_thm} discusses the more general scenario where it may be balanced or unbalanced.
\begin{theorem}\label{strictly_balanced_no._copies_thm}[See [\cite{b.bas_weirman}, Lemma 1], [\cite{b.bas_threshold}, Theorem 4], and \cite{b.bas_book}]
Let $H_{1}, \ldots, H_{m}$ be given strictly balanced, non-isomorphic finite graphs such that $\rho(H_{1}) = \cdots = \rho(H_{m}) = \rho_{0}$. Let $a_{i}$ be the number of automorphisms of $H_{i}$ for $i \in [m]$. Set $\alpha = \frac{1}{\rho_{0}}$, and for some $c > 0$, consider $\left\{G(n, c n^{-\alpha})\right\}_{n}$. If $N_{H_{i}}^{n}$ denotes the number of induced copies of $H_{i}$ in $G(n, c n^{-\alpha})$ for $n \in \mathbb{N}$ and $i \in [m]$, then the random vector $\left(N^{n}_{H_{1}}, \ldots, N^{n}_{H_{m}}\right)$ converges in distribution to the random vector $\left(\xi_{1}, \ldots, \xi_{m}\right)$ where $\xi_{1}, \ldots, \xi_{m}$ are independent with $\xi_{i} \sim \poi\left(a_{i}^{-1} c^{e_{i}}\right)$, where $e_{i}$ is the number of edges in $H_{i}$.

\end{theorem}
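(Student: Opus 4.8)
The plan is to prove this by the method of moments, showing that the joint factorial moments of $\left(N^{n}_{H_{1}}, \ldots, N^{n}_{H_{m}}\right)$ converge to those of a vector of independent Poisson random variables with the stated parameters. First I would set up notation: for each $i \in [m]$, write $H_{i}$ on vertex set of size $v_{i}$ with $e_{i}$ edges, so that $\rho(H_{i}) = e_{i}/v_{i} = \rho_{0}$ and $\alpha = v_{i}/e_{i}$ for every $i$. For an injective map $\phi$ from $V(H_{i})$ into $[n]$, let $X^{i}_{\phi}$ be the indicator that $\phi$ induces a copy of $H_{i}$ in $G(n, cn^{-\alpha})$ (that is, the image induces a subgraph isomorphic to $H_{i}$ with $\phi$ the isomorphism); then $N^{n}_{H_{i}} = \frac{1}{a_{i}}\sum_{\phi} X^{i}_{\phi}$, the $a_{i}$ accounting for overcounting by automorphisms.

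The first main step is the computation of $\E[N^{n}_{H_{i}}]$. There are $n(n-1)\cdots(n-v_{i}+1) \sim n^{v_{i}}$ injective maps, each inducing the correct copy with probability $p^{e_{i}}(1-p)^{\binom{v_{i}}{2}-e_{i}}$ with $p = cn^{-\alpha}$; since $p \to 0$, the non-edge factor tends to $1$, so $\E[X^{i}_{\phi}] \sim c^{e_{i}} n^{-\alpha e_{i}} = c^{e_{i}} n^{-v_{i}}$. Hence $\E[N^{n}_{H_{i}}] \sim a_{i}^{-1} c^{e_{i}} n^{v_{i}} n^{-v_{i}} = a_{i}^{-1} c^{e_{i}} =: \lambda_{i}$, matching the claimed Poisson mean. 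The second, and main, step is to control the higher factorial moments: one must show that for any nonnegative integers $r_{1}, \ldots, r_{m}$,
\begin{equation}
\E\!\left[\prod_{i=1}^{m} (N^{n}_{H_{i}})_{r_{i}}\right] \longrightarrow \prod_{i=1}^{m} \lambda_{i}^{r_{i}},
\end{equation}
where $(x)_{r}$ denotes the falling factorial. Expanding the left side as a sum over ordered tuples of $r_{i}$ copies of each $H_{i}$, the dominant contribution comes from tuples in which all the copies are pairwise vertex-disjoint: such a tuple uses $\sum_{i} r_{i} v_{i}$ vertices and $\sum_{i} r_{i} e_{i}$ edges, contributing $\sim n^{\sum r_{i} v_{i}} p^{\sum r_{i} e_{i}} = n^{0} = \Theta(1)$, and the combinatorial and automorphism factors combine to give exactly $\prod_{i} \lambda_{i}^{r_{i}}$. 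The heart of the argument is therefore to show that all other tuples — those in which some two copies share at least one vertex — contribute $o(1)$.

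This last point is where the strict balancedness hypothesis is essential, and I expect it to be the main obstacle. When two copies (of $H_{i}$ and $H_{j}$, possibly $i = j$) overlap, their union $F$ is a graph on $v(F) < v_{i} + v_{j}$ vertices; the expected number of copies of a fixed such union scales like $n^{v(F)} p^{e(F)} = n^{v(F) - \alpha e(F)} = n^{v(F) - e(F) v_{i}/e_{i}}$. One must argue that the exponent $v(F) - e(F)/\rho_{0}$ is strictly negative for every such overlap configuration. Since $F$ is built from two strictly balanced graphs of density $\rho_{0}$ glued along a nonempty common part, and any proper subgraph of a strictly balanced graph has density strictly less than $\rho_{0}$, one gets $e(F) > \rho_{0}\, v(F)$, i.e.\ $\rho(F) > \rho_{0}$ — unless the overlap forces $F = H_{i} = H_{j}$, which is impossible for non-isomorphic graphs and is excluded for $i = j$ precisely because we are counting \emph{ordered} distinct copies. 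Hence the exponent is negative and these terms vanish; a short inclusion–exclusion converts counts of copies into counts of \emph{induced} copies without changing the leading asymptotics, since $p \to 0$. Finally, convergence of all joint factorial moments of a bounded-below integer-valued vector to those of independent Poissons, together with the fact that the Poisson distribution is determined by its moments, yields joint convergence in distribution (by the standard moment method for Poisson limits), completing the proof.
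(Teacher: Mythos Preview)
The paper does not prove this theorem at all: it is quoted as a known result, with references to Bollob\'as--Wierman, Bollob\'as's threshold paper, and Bollob\'as's book, and is used as a black box in Sections~\ref{upper} and \ref{k=4}. So there is no ``paper's own proof'' to compare against.

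Your proposal is correct and is exactly the classical argument found in those references: compute joint factorial moments, show that the contribution from tuples of pairwise vertex-disjoint copies gives $\prod_i \lambda_i^{r_i}$, and use strict balancedness to kill the overlapping terms. Your key inequality is right and worth writing out once explicitly: if two embedded copies overlap in a nonempty proper part $K$ with $v(K)$ vertices and $e(K)$ common required edges, then $K$ is (isomorphic to) a proper subgraph of each $H_i$, so $e(K) < \rho_0\, v(K)$; hence for the union $F$ one has $e(F) = e_i + e_j - e(K) > \rho_0(v_i + v_j - v(K)) = \rho_0\, v(F)$, giving $n^{v(F)} p^{e(F)} = o(1)$. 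The case ``$K$ equals one of the $H_i$'' cannot occur, since that would make $H_i$ a proper subgraph of $H_j$ with density $\rho_0$, contradicting strict balancedness of $H_j$. The passage from copies to induced copies is harmless because the extra $(1-p)$ factors tend to $1$ and inconsistent overlap configurations simply contribute $0$. Finally, the factorial-moment method for joint Poisson convergence is standard (the Poisson law is moment-determined), so the conclusion follows.
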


For a given finite graph $G$, we define $\Phi_{G}(n, p(n)) = \min\left\{\E\left[N^{n}_{H}\right]: H \subseteq G, e(H) > 0\right\}$. 
\begin{theorem}\label{unbalanced_no._copies_thm}[See [\cite{janson_luczak_rucinski}, Theorem 3.9]]
For a given finite graph $G$ with at least one edge, for every edge probability sequence $\{p(n)\}_{n \in \mathbb{N}}$, we have
\begin{equation}
\exp\left\{-\frac{1}{1-p(n)} \Phi_{G}(n, p(n))\right\} \leq \Prob\left[G(n, p(n)) \text{ contains induced } G\right] \leq \exp\left\{-\Theta\left(\Phi_{G}(n, p(n))\right)\right\}.
\end{equation}
\end{theorem}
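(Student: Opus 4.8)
The plan is to prove the two inequalities by separate classical tools: the lower bound via a correlation inequality (Harris/FKG), the upper bound via Janson's inequality. First some set-up, together with a reading of the statement. I interpret the middle term as $\Prob[G(n,p(n)) \text{ does not contain } G]$, and ``copy'' as ``subgraph copy'': taken literally the displayed bounds fail (for constant $p$ the event of containing $G$ has probability tending to $1$ while $\Phi_G\to\infty$), and for $p(n)=o(1)$ --- the range relevant to this paper --- subgraph copies and induced copies of a fixed graph differ by only a bounded factor, so nothing is lost. Write $q=1-p(n)$; let $\mathcal H$ be the finite set of isomorphism types of subgraphs $H\subseteq G$ with $e(H)\ge1$; for $H\in\mathcal H$ let $X_H$ count the copies of $H$ in $G(n,p(n))$, so $\E X_H=\Theta(n^{v(H)}p^{e(H)})$ and $\Phi_G=\min_{H\in\mathcal H}\E X_H$. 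Since $G\in\mathcal H$, $\Phi_G\le\E X_G=:\mu$.

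\emph{Lower bound.} Choose $H^{*}\in\mathcal H$ with $\E X_{H^{*}}=\Phi_G$. Every copy of $G$ contains a copy of $H^{*}$, so $\{X_G=0\}\supseteq\{X_{H^{*}}=0\}$ and it suffices to bound $\Prob[X_{H^{*}}=0]$ from below. Index by $A$ the $N^{*}=\Theta(n^{v(H^{*})})$ potential copies of $H^{*}$ in $[n]$ and let $B_A$ be the increasing event that all $e(H^{*})$ edges of $A$ are present, so $\{X_{H^{*}}=0\}=\bigcap_A\overline{B_A}$ is an intersection of decreasing events. Harris's inequality gives $\Prob[X_{H^{*}}=0]\ge\prod_A\Prob[\overline{B_A}]=(1-p^{e(H^{*})})^{N^{*}}$. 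Using $\log(1-x)\ge-x/(1-x)$, $N^{*}p^{e(H^{*})}=\E X_{H^{*}}=\Phi_G$, and $1-p^{e(H^{*})}\ge1-p=q$ (valid as $e(H^{*})\ge1$), this is at least $\exp(-\Phi_G/q)$, the claimed lower bound. Since $\{X_{H^{*}}=0\}$ is also contained in the event that $G(n,p(n))$ has no induced copy of $G$, the same estimate holds for the induced version.

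\emph{Upper bound.} Write $X=X_G=\sum_A\mathbf{1}_{B_A}$ over potential copies $A$ of $G$, with $B_A$ increasing and dependency graph $A\sim A'$ iff $A$ and $A'$ share an edge; put $\mu=\E X=\Theta(n^{v(G)}p^{e(G)})$ and $\Delta=\sum_{A\ne A',\,A\sim A'}\Prob[B_A\cap B_{A'}]$. The crux is the estimate $\Delta=O(\mu^{2}/\Phi_G)$. Group the ordered pairs $(A,A')$ by the isomorphism type of the graph $H=(V(A)\cap V(A'),\,E(A)\cap E(A'))$; since $A\sim A'$ we have $e(H)\ge1$, so $H\in\mathcal H$, and for each fixed type there are $\Theta(n^{2v(G)-v(H)})$ pairs, each contributing $\Prob[B_A\cap B_{A'}]=p^{2e(G)-e(H)}$. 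Hence the type-$H$ block contributes $\Theta(n^{2v(G)-v(H)}p^{2e(G)-e(H)})=\Theta(\mu^{2}/\E X_H)$, and summing over the finitely many types gives $\Delta\le C_G\mu^{2}/\Phi_G$ with $C_G$ depending only on $G$. Janson's inequality and its extended form give $\Prob[X=0]\le\exp(-\tfrac12\min\{\mu,\ \mu^{2}/\Delta\})$, which, using $\mu^{2}/\Delta\ge\Phi_G/C_G$ and $\mu\ge\Phi_G$, is $\exp(-\Theta(\Phi_G))$.

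\emph{Where the difficulty lies.} The probabilistic ingredients (Harris, Janson) are off-the-shelf; the real content is the uniform combinatorial count that for each overlap type $H\in\mathcal H$ the number of pairs of copies of $G$ with $\bigl(V(A)\cap V(A'),E(A)\cap E(A')\bigr)\cong H$ is $\Theta(n^{2v(G)-v(H)})$, hence $\Delta=O(\mu^{2}/\Phi_G)$ --- it is this that pins the exponent at $\Theta(\Phi_G)$ rather than something smaller. If one insists on the literally induced statement for \emph{all} $p$, the indicators of ``$A$ induces $G$'' are no longer monotone, so Janson must be replaced by Suen's inequality with the same vertex-overlap dependency graph; the analogous $\Delta$-bound goes through after discarding incompatible overlapping pairs (those in which a shared pair of vertices would be forced to be both an edge and a non-edge), and verifying that this extra book-keeping is harmless is the one genuinely new point.
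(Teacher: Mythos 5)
The paper does not prove this statement at all: it is imported verbatim (up to a slip in phrasing) from Janson--{\L}uczak--Ruci\'nski, Theorem 3.9, so there is no internal proof to compare yours against. Your argument is, in essence, the standard proof from that source, and it is correct: the lower bound via Harris/FKG applied to the densest-constraint subgraph $H^{*}$ realizing $\Phi_{G}$, giving $\Prob[X_{H^{*}}=0]\geq (1-p^{e(H^{*})})^{N^{*}}\geq \exp\{-\Phi_{G}/(1-p)\}$, and the upper bound via Janson's inequality together with the overlap-type count $\Delta=O\left(\mu^{2}/\Phi_{G}\right)$, which is exactly the computation that produces the exponent $\Theta(\Phi_{G})$. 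Your preliminary reinterpretation is also the right call and worth making explicit: as printed, the middle term should be the probability that $G(n,p(n))$ contains \emph{no} copy of $G$ (the stated inequalities are false for the containment event, e.g.\ for constant $p$), and this corrected reading is the one the paper actually uses in \eqref{general_graph_not_0-1} and in Section~\ref{upper}, where $p=n^{-1/\maxden(G)}$ makes $\Phi_{G}=\Theta(1)$ so that both containment and non-containment probabilities are bounded away from $0$ and $1$. Two minor bookkeeping points, neither a gap in the regime the paper needs ($p$ bounded away from $1$, indeed $p\to 0$): the paper's $\Phi_{G}$ is defined through \emph{induced} copies of the subgraphs $H$, which differs from your $\min_{H}\E X_{H}$ only by constant factors when $p$ is bounded away from $1$; and your remark that the genuinely new issue for a literally induced version is the loss of monotonicity (Suen in place of Janson) is accurate, though unnecessary for the application here, since ``no copy of $H^{*}$'' already implies ``no induced copy of $G$'' for the lower bound, and at the threshold scale the induced-containment probability can be handled as the paper does via Theorem~\ref{strictly_balanced_no._copies_thm}.
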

From Theorem~\ref{unbalanced_no._copies_thm} and [\cite{janson_luczak_rucinski}, Section 3.3], we conclude that 
\begin{multline}\label{general_graph_not_0-1}
0 < \liminf_{n \rightarrow \infty} \Prob\left[G\left(n, n^{-1/\maxden(G)}\right) \text{ contains induced } G\right] \\ \leq \limsup_{n \rightarrow \infty} \Prob\left[G\left(n, n^{-1/\maxden(G)}\right) \text{ contains induced } G\right] < 1.
\end{multline}






A function $r(n)$ is called a \emph{threshold function} for some graph property $P$ if a.a.s.\ $\{G(n, p(n))\}$ does not satisfy $P$ whenever $p(n) \ll r(n)$ and a.a.s.\ $\{G(n, p(n))\}$ satisfies $P$ whenever $p(n) \gg r(n)$, or vice versa. 
\begin{theorem}[See \cite{b.bas_book}, \cite{janson_luczak_rucinski} and \cite{02}, Theorem 4.4.2] \label{threshold}
If $G$ is strictly balanced and as before, $N^{n}_{G}$ denotes the number of induced copies of $G$ in $G\left(n, p(n)\right)$, then $n^{-1/\rho(G)}$ is the threshold function for the property $\left\{N^{n}_{G} \geq 1\right\}$.

\end{theorem}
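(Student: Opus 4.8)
\textbf{Proof proposal for Theorem~\ref{threshold}.}

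The plan is to verify the two halves of the threshold statement separately: that $p(n) \ll n^{-1/\rho(G)}$ forces $N_G^n = 0$ a.a.s., and that $p(n) \gg n^{-1/\rho(G)}$ forces $N_G^n \geq 1$ a.a.s. Throughout write $v = v(G)$, $e = e(G)$, so that $\rho(G) = e/v$, and recall that strict balance means $\rho(H) < \rho(G)$ for every proper subgraph $H \subset G$ with $e(H) > 0$.

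\emph{Subcritical side.} First I would compute $\E[N_G^n]$. The number of ways to place an induced copy of $G$ on a $v$-element vertex set of $[n]$ is $\Theta(n^v)$, and each such placement is an induced copy with probability $p^e (1-p)^{\binom{v}{2}-e} = \Theta(p^e)$ when $p \to 0$; hence $\E[N_G^n] = \Theta(n^v p^e)$. When $p(n) \ll n^{-v/e}$ we get $n^v p^e \to 0$, so by Markov's inequality $\Prob[N_G^n \geq 1] \leq \E[N_G^n] \to 0$, giving $N_G^n = 0$ a.a.s.

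\emph{Supercritical side.} Here I would apply the second moment method to the random variable $N_G^n$ (the number of induced copies). One shows $\mathrm{Var}(N_G^n) = o\big(\E[N_G^n]^2\big)$, whence Chebyshev gives $N_G^n \geq 1$ a.a.s. Expanding the variance, the dominant contributions come from pairs of potential copies sharing a common subgraph $H$; the corresponding term is of order $n^{2v - v(H)} p^{2e - e(H)}$ (up to the $(1-p)$ factors, which are $\Theta(1)$), to be compared with $\E[N_G^n]^2 = \Theta(n^{2v} p^{2e})$. The ratio is $\Theta\big(n^{-v(H)} p^{-e(H)}\big)$, and this tends to $0$ precisely when $n^{v(H)} p^{e(H)} \to \infty$, i.e.\ when $p \gg n^{-v(H)/e(H)} = n^{-1/\rho(H)}$. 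Strict balance of $G$ gives $\rho(H) < \rho(G)$ for every proper nonempty subgraph $H$, so $1/\rho(H) > 1/\rho(G)$, and the hypothesis $p \gg n^{-1/\rho(G)}$ indeed implies $p \gg n^{-1/\rho(H)}$ for each such $H$. Since there are only finitely many subgraphs $H$ to consider, the full variance is $o(\E[N_G^n]^2)$, completing the argument. (Alternatively, one may simply invoke Theorem~\ref{safe_extension_copies} or the cited references \cite{b.bas_book,janson_luczak_rucinski,02}, from which the concentration of $N_G^n$ around its mean follows directly; I would mention this as the shortest route.)

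The main obstacle is the bookkeeping in the second-moment estimate: one must correctly isolate, for each nonempty proper subgraph $H \subseteq G$, the combinatorial factor counting overlapping pairs of induced copies whose intersection is isomorphic to $H$, and verify that the induced-subgraph (rather than not-necessarily-induced) constraint does not alter the exponents — it does not, since the extra $(1-p)$ factors are bounded away from $0$. Once that is in place, strict balance is exactly the hypothesis that makes every such cross term negligible under $p \gg n^{-1/\rho(G)}$, so the two threshold directions match at $n^{-1/\rho(G)}$.
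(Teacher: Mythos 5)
The paper does not prove Theorem~\ref{threshold} at all—it is quoted from the literature (\cite{b.bas_book}, \cite{janson_luczak_rucinski}, \cite{02})—and your first/second-moment argument is exactly the standard proof given in those references, so it is correct and matches the intended source. The only points worth flagging are that for \emph{induced} copies the property $\left\{N^{n}_{G} \geq 1\right\}$ is not monotone, so the $(1-p)$ factors (and hence the statement itself) are only harmless because in this paper $p(n) = n^{-\alpha} \rightarrow 0$, which you implicitly use, and that in the variance bookkeeping the quantity $1/\rho(H)$ should only be invoked for subgraphs $H$ with $e(H) > 0$ (overlaps with no shared edges contribute a ratio $n^{-v(H)} \rightarrow 0$ directly).
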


\section{Upper bound}\label{upper}
This section is devoted to finding an upper bound on $\alpha_{k}$ as defined in Theorem~\ref{MAIN}, which involves coming up with:
\renewcommand{\theenumi}{(\roman{enumi})}
\begin{enumerate}
\item a sentence $\varphi_{k} \in \mathcal{E}_{k}$,
\item a finite set $\Sigma_{k}$ of finite graphs such that $\varphi_{k}$ is true on a graph $G$ if and only if $G$ contains $\Gamma$ as an induced subgraph for some $\Gamma \in \Sigma_{k}$.
\end{enumerate}
The sentence $\varphi_{k}$ states that there exists a clique of size $k-3$, comprising vertices $a_{1}, \ldots, a_{k-3}$, henceforth called the \emph{roots}, such that:
\renewcommand{\theenumi}{(\roman{enumi})}
\begin{enumerate}
\item for distinct $i, j \in [k-3]$, there exists a vertex $v_{i,j}$ that is adjacent to $a_{\ell}$ for all $\ell \in [k-3] \setminus \{i, j\}$ but not to $a_{i}$ and $a_{j}$, and we call these $v_{i,j}$ \emph{ground vertices};
\item for each $v_{i,j}$ and $\ell \in [k-3]$, there exists a vertex $v_{i, j, \ell}$ which is adjacent to $v_{i,j}$ and $a_{t}$ for all $t \in [k-3] \setminus \{\ell\}$, but not to $a_{\ell}$, and we call these $v_{i,j,\ell}$ \emph{first-level to $v_{i,j}$};
\item for each $v_{i,j}$, each $v_{i,j,\ell}$, and $m \in [k-3]$, there exists a vertex $v_{i,j,\ell,m}$ which is adjacent to $v_{i,j}$, $v_{i,j,\ell}$ and $a_{t}$ for all $t \in [k-3] \setminus \{m\}$, but not to $a_{m}$, and we call these $v_{i,j,\ell,m}$ \emph{second-level to $v_{i,j}$ and $v_{i,j,\ell}$};
\item for each $v_{i,j}$ and each $v_{i,j,\ell}$, there exists a vertex $w_{i,j,\ell}$ which is adjacent to $v_{i,j}$, $v_{i,j,\ell}$ and $a_{t}$ for all $t \in [k-3]$, and we call these $w_{i,j,\ell}$ \emph{universal to $v_{i,j}$ and $v_{i,j,\ell}$}.

\end{enumerate}
The definition makes it immediate that $\varphi_{k}$ is existential first order with quantifier depth precisely $k$.

Henceforth, a vertex $v_{i,j,\ell}$ that is first-level to ground vertex $v_{i,j}$ is simply referred to as a first-level vertex, a vertex $v_{i,j,\ell,m}$ that is second-level to ground vertex $v_{i,j}$ and first-level vertex $v_{i,j,\ell}$ is referred to as a second-level vertex, and a vertex $w_{i,j,\ell}$ that is universal to $v_{i,j}$ and $v_{i,j,\ell}$ is referred to as a universal vertex (the notations reveal which ground and / or first-level vertices they correspond to).

By definition of $\varphi_{k}$, there exists a \emph{finite} family $\Sigma_{k}$ of finite graphs $\Gamma$ comprising \emph{only} the following:
\renewcommand{\labelenumi}{\theenumi}
\renewcommand{\theenumi}{(\roman{enumi})}
\begin{enumerate*}
\item the $k-3$ roots,
\item the ${k-3 \choose 2}$ ground vertices,
\item for each ground vertex, the vertices first-level to it,
\item for each ground vertex and each corresponding first-level vertex, the vertices second-level to them,
\item for each ground vertex and each corresponding first-level vertex, the vertex universal to them,
\end{enumerate*} 
such that $G \models \varphi_{k}$ if and only if $G$ contains an induced subgraph isomorphic to some $\Gamma$ in $\Sigma_{k}$. 

\begin{theorem}\label{main_upper_bound}
If $\alpha_{0}^{-1} := \min\left\{\maxden(\Gamma): \Gamma \in \Sigma_{k}\right\}$, then $\alpha_{0}^{-1}$ is at least $k - 2 - O\left(\frac{1}{k^{2}}\right)$.
\end{theorem}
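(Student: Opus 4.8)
The plan is to single out the \emph{sparsest} member $\Gamma^{*}$ of $\Sigma_{k}$, compute its edge‑to‑vertex ratio exactly, and then show that every member of $\Sigma_{k}$ is at least as dense up to an $O(k^{-2})$ error; since $\maxden(\Gamma)\ge\rho(\Gamma)$ for every graph $\Gamma$, this gives $\alpha_{0}^{-1}=\min_{\Gamma\in\Sigma_{k}}\maxden(\Gamma)\ge k-2-O(k^{-2})$. Write $m=k-3$. Here $\Gamma^{*}$ is the graph built from exactly one vertex per existential role of $\varphi_{k}$, with no edges beyond those $\varphi_{k}$ forces: the $m$ roots (a clique), one ground vertex per pair ($\binom{m}{2}$ of them), one first‑level vertex per $\big((i,j),\ell\big)$ ($\binom{m}{2}m$ of them), one second‑level vertex per $\big((i,j),\ell,r\big)$ ($\binom{m}{2}m^{2}$ of them), and one universal vertex per $\big((i,j),\ell\big)$ ($\binom{m}{2}m$ of them).

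The first, purely mechanical, step is to enumerate $v(\Gamma^{*})$ and $e(\Gamma^{*})$. Recording that a ground vertex is joined to its $m-2$ roots, its $m$ first‑level children, its $m^{2}$ second‑level grandchildren and its $m$ universal grandchildren; a first‑level vertex to $m-1$ roots, its ground, its $m$ second‑level children and its universal child; a second‑level vertex to $m-1$ roots, its ground and its first‑level vertex; and a universal vertex to all $m$ roots, its ground and its first‑level vertex, one collects terms to obtain
\begin{equation}
v(\Gamma^{*})=m+\binom{m}{2}(m+1)^{2},\qquad e(\Gamma^{*})=\binom{m}{2}\big((m+1)^{3}-2\big),
\end{equation}
and hence the clean identity
\begin{equation}
(m+1)\,v(\Gamma^{*})-e(\Gamma^{*})=m(m+1)+2\binom{m}{2}=2m^{2}.
\end{equation}
Since $v(\Gamma^{*})=\Theta(m^{4})=\Theta(k^{4})$, this yields $\rho(\Gamma^{*})=(m+1)-2m^{2}/v(\Gamma^{*})=(k-2)-\Theta(k^{-2})$, so $\maxden(\Gamma^{*})\ge\rho(\Gamma^{*})=(k-2)-\Theta(k^{-2})$.

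The second step transfers this to an arbitrary $\Gamma\in\Sigma_{k}$. Every such $\Gamma$ arises from $\Gamma^{*}$ by (i) possibly adding edges not demanded by $\varphi_{k}$, which only increases $e(\Gamma)$, and (ii) possibly identifying several vertices of one category that witness distinct roles. The root clique and the $\binom{m}{2}$ ground vertices are rigid (distinct pairs force distinct root‑non‑neighbourhoods, and the five categories are disjoint by the definition of $\Sigma_{k}$), so the only admissible identifications are among first‑level, second‑level or universal vertices sharing the same root‑neighbourhood. A short degree computation shows that collapsing $p$ such vertices into one removes $p-1$ vertices and at most $(p-1)(m+1)$ edges, hence changes $e-(k-2)v$ by a nonnegative amount; moreover a collapse of first‑level or second‑level vertices removes at most $m$ edges per deleted vertex, so it raises $e-(k-2)v$ by at least the number of vertices it removes, while the universal vertices number only $O(k^{3})$ in total. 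Consequently $e(\Gamma)-(k-2)v(\Gamma)\ge -2m^{2}$ always; and either this quantity is nonnegative, whence $\rho(\Gamma)\ge k-2$, or it lies in $[-2m^{2},0)$, in which case few first‑/second‑level collapses were performed, $v(\Gamma)$ is still $\Theta(k^{4})$, and $\rho(\Gamma)=(k-2)+\big(e(\Gamma)-(k-2)v(\Gamma)\big)/v(\Gamma)\ge (k-2)-O(k^{-2})$. In either case $\maxden(\Gamma)\ge\rho(\Gamma)\ge k-2-O(k^{-2})$, as required.

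The part I expect to demand the most care is the structural input to the second step: pinning down exactly which vertex identifications are compatible with $\varphi_{k}$, confirming that no minimal configuration in $\Sigma_{k}$ is sparser than the above accounting permits, and carrying out the edge bookkeeping for each type of collapse. The enumeration in the first step is routine arithmetic; all the subtlety lies in controlling the full family $\Sigma_{k}$.
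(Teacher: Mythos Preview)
Your first step---the explicit computation of $v(\Gamma^{*})$, $e(\Gamma^{*})$, and the identity $(m+1)v(\Gamma^{*})-e(\Gamma^{*})=2m^{2}$---is correct and gives a clean proof that $\rho(\Gamma^{*})=(k-2)-\Theta(k^{-2})$. The gap is in the second step, and it is real.

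First, the structural claim that ``the five categories are disjoint by the definition of $\Sigma_{k}$'' is false: a first-level vertex $v_{i,j,\ell}$ and a second-level vertex $v_{i',j',\ell',\ell}$ have the same forced root-neighbourhood and can coincide (the paper explicitly exploits this). Second, and more seriously, the claim that ``a collapse of first-level or second-level vertices removes at most $m$ edges per deleted vertex'' does not hold for first-level vertices. In $\Gamma^{*}$ a first-level vertex has degree $2m+1$ (it is adjacent to $m-1$ roots, its ground, its $m$ second-level children, and its universal). If you identify $v_{i,j,\ell}$ with $v_{i',j',\ell}$ \emph{after} their second-level children and universals have already been identified, the collapse removes $2m$ edges, not $m$, and $e-(m+1)v$ drops by $m-1$. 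One can try to rescue this by choosing a careful order of collapses and tracking cancellations globally, but that is exactly the hard part, and your sketch does not do it. The final dichotomy (either $e(\Gamma)-(k-2)v(\Gamma)\ge 0$ or $v(\Gamma)=\Theta(k^{4})$) therefore rests on an unproved accounting.

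The paper takes a substantially different route: rather than bounding $\rho(\Gamma)$ for the whole graph, it isolates combinatorial parameters of $\Gamma$ (the number $a$ of unique first-level vertices, the number $b$ of unique second-level vertices with unique first-level parents, the number of ``skewed'' and ``first-level'' edges) and, for each regime, exhibits a specific subgraph $H\subseteq\Gamma$ whose density already exceeds $k-2$ or $k-2-O(k^{-2})$. In particular, when many first-level vertices coincide (small $a$) the paper does \emph{not} try to control $\rho(\Gamma)$; it instead finds a denser subgraph on roots, grounds, first-levels, and universals and shows $\maxden(\Gamma)>k-2$ outright. Your global $\rho(\Gamma)$ strategy might be made to work, but it would require a genuinely new argument to handle the interaction between collapses at different levels; as written, the second step is not a proof.
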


The proof comprises a few lemmas. For $\Gamma \in \Sigma_{k}$, we call a first-level vertex $v_{i,j,\ell}$ \emph{unique} if it coincides with no other first-level vertex, and a second-level vertex $v_{i,j,\ell,m}$ \emph{unique} if it coincides with no other vertex in $\Gamma$ (i.e.\ with neither any first-level vertex nor any other second-level vertex). Let $\Gamma$ contain $a$ unique first-level vertices and $b$ unique second-level vertices $v_{i,j,\ell,m}$ such that the corresponding first-level vertex $v_{i,j,\ell}$ is also unique. We consider the subgraph $H$ of $\Gamma$ induced on:
\renewcommand{\labelenumi}{\theenumi}
\renewcommand{\theenumi}{(\roman{enumi})}
\begin{enumerate*}
\item all roots,
\item all ground vertices,
\item all unique first-level vertices,
\item all unique second-level vertices whose corresponding first-level vertices are also unique,
\item all universal vertices $w_{i,j,\ell}$ whose corresponding first-level vertices $v_{i,j,\ell}$ are unique, and let the number of distinct such vertices be $\mu$. 
\end{enumerate*}

In $H$, there are 
\renewcommand{\labelenumi}{\theenumi}
\renewcommand{\theenumi}{(\roman{enumi})}
\begin{enumerate*}
\item ${k-3 \choose 2}$ edges with both end-points roots, 
\item $(k-5){k-3 \choose 2}$ edges with one end-point a root and another a ground vertex, 
\item $a(k-3)$ edges with one end-point a unique first-level vertex and the other a root or a ground vertex, 
\item $b(k-2)$ edges with one end-point a unique second-level vertex and the other a root or a ground vertex or a unique first-level vertex. 
\end{enumerate*}
For each universal $w_{i,j,\ell}$ with $v_{i,j,\ell}$ unique, $w_{i,j,\ell}$ is adjacent to all roots and the corresponding ground vertex $v_{i,j}$, thus contributing $\mu(k-2)$ many edges, whereas each unique first-level vertex will have an edge with one of these universal vertices, contributing additional $a$ edges. Thus
\begin{align}\label{density_H_lower_bound_1}
\rho(H) &\geq \frac{{k-3 \choose 2} + (k-5){k-3 \choose 2} + a(k-3) + b(k-2) + \mu(k-2) + a}{k-3 + {k-3 \choose 2} + a + b + \mu} \nonumber\\
&= \frac{\frac{(k-3)(k-4)^{2}}{2} + (a+b+\mu)(k-2)}{\frac{(k-3)(k-2)}{2} + a+b+\mu}.
\end{align}
If $b \geq \frac{k^{4}}{4}$, then $\rho(H)$ is at least $k-2-O(k^{-2})$. 


\begin{lemma}\label{upper_bound_lem_1}
For $\epsilon \in \left(0, \frac{1}{2}\right)$, if $a < \left(\frac{1}{2} - \epsilon\right)k^{3}$, then $\maxden(\Gamma) > k-2$ for $k$ sufficiently large.
\end{lemma}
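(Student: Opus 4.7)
The plan is to exhibit a single subgraph $H^\star$ of $\Gamma$ whose density strictly exceeds $k-2$ under the hypothesis $a < (\tfrac{1}{2}-\epsilon)k^3$. Specifically, $H^\star$ will be the subgraph of $\Gamma$ induced on the union of (i) all $k-3$ roots, (ii) all $\binom{k-3}{2}$ ground vertices, (iii) all $A := a+|S|$ distinct first-level vertices (where $|S|$ counts the distinct non-unique first-levels), and (iv) all $U$ distinct universal vertices of $\Gamma$. The motivation is that the subgraph $H$ used just before the lemma always has density strictly below $k-2$, because the unique second-level vertices contribute only $k-2$ edges apiece and dilute the ratio; swapping them for universals (each contributing $\ge k-1$ incident edges) and adjoining all non-unique first-levels should push the density across the threshold precisely when $a$ is small.

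The first step is the structural observation that if a first-level vertex $v$ plays the role $v_{i,j,\ell}$ for more than one slot, all those slots share the same index $\ell$, since $v$ cannot be simultaneously adjacent and non-adjacent to the same root. Hence every non-unique $v$ is adjacent to exactly $k-4$ roots and to $c(v)\ge 2$ distinct grounds, $\sum_{v\in S}c(v)=T:=\binom{k-3}{2}(k-3)-a$, and $|S|\le T/2$, giving $A\le a/2+\binom{k-3}{2}(k-3)/2 = a/2 + k^{3}/4 + O(k^{2})$. The second step is the edge count in $H^\star$, using exact values for the root-root, root-ground, root-first-level, root-universal, and ground-first-level contributions, plus two unconditional lower bounds: the number of (ground, universal) edges is at least $U$ (each distinct universal covers at least one slot, hence is adjacent to some ground), and the number of (first-level, universal) edges is at least $A$ (each distinct first-level plays at least one slot, hence is adjacent to a universal). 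This yields
\[
e(H^\star)\;\ge\;\binom{k-3}{2}(2k-7) + (k-3)A + (k-2)U,\qquad v(H^\star) = (k-3)+\binom{k-3}{2}+A+U.
\]
A short algebraic simplification of $\rho(H^\star)>k-2$ makes the coefficient of $U$ vanish, reducing the condition to $A < (k-3)\bigl[(k-4)(k-5)/2-(k-2)\bigr] = k^{3}/2 - 7k^{2} + O(k)$. Substituting the bound $A < (\tfrac{1}{2}-\tfrac{\epsilon}{2})k^{3} + O(k^{2})$ coming from the hypothesis and $|S|\le T/2$ turns the target into $(\epsilon/2)k^{3} > (9/2)k^{2} + O(k)$, which holds once $k\ge C/\epsilon$ for an explicit constant $C$. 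Since $H^\star$ is a subgraph of $\Gamma$, we conclude $\maxden(\Gamma)\ge\rho(H^\star)>k-2$ for all sufficiently large $k$.

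The main obstacle I expect is justifying the two lower bounds on universal-related edge counts in full generality, i.e.\ without any hypothesis on how the universal vertices or the first-level vertices may coincide in $\Gamma$. The point is that this is genuinely robust: however many universals are collapsed to a common vertex, distinct universals still contribute distinct edges to whatever grounds (or first-level vertices) they are adjacent to, so the count is at least the number of distinct universals; and symmetrically for the first-level side. The pleasant feature of the argument is that the $U$ terms drop out of the density inequality entirely, so the bound is insensitive to any amount of universal-coincidence, and the only quantitative control we need is on $A$, which is supplied directly by the hypothesis through $|S|\le T/2$.
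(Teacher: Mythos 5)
Your proof is correct and follows essentially the same approach as the paper: you take the identical subgraph $H^{\star}$ (all roots, all grounds, all distinct first-levels, all distinct universals), derive the same bound on the number $A = \lambda$ of distinct first-level vertices from the pigeonhole on non-unique coincidences, count edges by the same decomposition (arriving at the same lower bound $\binom{k-3}{2}(2k-7) + (k-3)\lambda + (k-2)\mu$), and exploit the same cancellation that makes $\rho(H^{\star}) > k-2$ equivalent to $\lambda < \frac{1}{2}(k-3)^{2}(k-8)$, a condition independent of $\mu$. The only cosmetic difference is that you phrase the vanishing of the $\mu$-coefficient explicitly and push through the asymptotics to an explicit threshold $k \gtrsim C/\epsilon$, whereas the paper leaves the final inequality in the form $k-2 + (\text{positive})/(\text{positive})$ and says ``for $k$ large enough.''
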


\begin{proof}
Let $\lambda$ be the total number of distinct first-level vertices in $\Gamma$. Recall that $\Gamma$ contains $a$ unique first-level vertices. If a first-level vertex is not unique, it coincides with at least one other first-level vertex. Thus the remaining $(k-3) {k-3 \choose 2} - a$ non-unique first-level vertices contribute at most $\frac{1}{2}\left\{(k-3){k-3 \choose 2} - a\right\}$ distinct vertices. Therefore
\begin{equation}
\lambda \leq a + \frac{1}{2}\left\{(k-3) {k-3 \choose 2} - a\right\} < \left(\frac{1}{2} - \frac{\epsilon}{2}\right) k^{3} (1+o(1)).
\end{equation}
Consider the subgraph $H$ of $\Gamma$ comprising:
\renewcommand{\labelenumi}{\theenumi}
\renewcommand{\theenumi}{(\roman{enumi})}
\begin{enumerate*}
\item all roots,
\item all ground vertices, 
\item all first-level vertices, 
\item all universal vertices (let there be $\mu$ distinct such vertices).
\end{enumerate*}

There are ${k-3 \choose 2}$ edges in $H$ with both end-points roots and $(k-5){k-3 \choose 2}$ edges with one end-point a root and the other a ground vertex. Each of the distinct first-level vertices is adjacent to $k-4$ roots, hence there are $\lambda(k-4)$ edges in $H$ with one end-point a root and another a first-level vertex. When a first-level vertex $v$ is not unique, there exists some $\ell$ and some subset $S$ of $\left\{\{i, j\}: i \neq j, i, j \in [k-3]\right\}$ with $|S| \geq 2$ such that $v = v_{i,j,\ell}$ for all $\{i, j\} \in S$. The edges in $H$ whose one end-point is $v$ and the other a ground vertex, are given by $\left\{v_{i,j}, v\right\} = \left\{v_{i,j}, v_{i,j,\ell}\right\}$ for all $\{i, j\} \in S$, and they are distinct. Thus the number of edges in $H$ whose one end-point is a first-level vertex and the other a ground vertex is at least $(k-3) {k-3 \choose 2}$. Each distinct universal vertex is adjacent to $k-3$ roots, and to at least one ground vertex (for example, if universal vertices $w_{i,j,\ell}$ and $w_{i,j,\ell'}$, for distinct $\ell$ and $\ell'$, coincide, then edges $\left\{w_{i,j,\ell}, v_{i,j}\right\}$ and $\left\{w_{i,j,\ell'}, v_{i,j}\right\}$ will coincide too, and will be counted once). Thus the number of edges in $H$ whose one end-point is a universal vertex and another a root or a ground vertex is at least $\mu(k-2)$. Finally, each first-level vertex is adjacent to at least one universal vertex, even when the first-level vertex is non-unique and the universal vertex coincides with another universal vertex (for example, the first-level vertices $v_{i_{1},j_{1},\ell}$ and $v_{i_{2},j_{2},\ell}$ may coincide, and the universal vertices $w_{i_{1},j_{1},\ell}$ and $w_{i_{2},j_{2},\ell}$ may coincide, so that edges $\left\{v_{i_{1},j_{1},\ell}, w_{i_{1},j_{1},\ell}\right\}$ and $\left\{v_{i_{2},j_{2},\ell}, w_{i_{2},j_{2},\ell}\right\}$ coincide and are counted once). Thus the number of edges in $H$ whose one end-point is a universal vertex and the other a first-level vertex is at least $\lambda$. Thus
\begin{align}
\rho(H) &\geq \frac{{k-3 \choose 2} + (k-5){k-3 \choose 2} + \lambda(k-4) + {k-3 \choose 2}(k-3) + \mu(k-2) + \lambda}{(k-3) + {k-3 \choose 2} + \lambda + \mu} \nonumber\\
&= k-2 + \frac{\frac{1}{2}(k-3)^{2}(k-8) - \lambda}{\frac{1}{2}(k-3)(k-2) + \lambda + \mu} \nonumber\\
& \geq k-2 + \frac{\frac{1}{2}(k-3)^{2}(k-8) - \left(\frac{1}{2} - \frac{\epsilon}{2}\right)k^{3}}{\frac{1}{2}(k-3)(k-2) + \lambda + \mu} > k-2
\end{align}
for all $k$ large enough. Consequently, $\maxden(\Gamma) \geq \rho(H) > k-2$. 

\end{proof}

By our definition, a unique first-level vertex is forbidden from coinciding with another first-level vertex, but may coincide with a second-level vertex. If $v_{i,j,\ell}$ is a unique first-level vertex that coincides with the second-level vertex $v_{i_{1},j_{1},\ell_{1},\ell}$ for some $\{i_{1},j_{1}\} \neq \{i,j\}$, we call the edge $\left\{v_{i,j,\ell}, v_{i_{1},j_{1}}\right\}$ a \emph{skewed} edge. A first-level vertex can coincide with either another first-level vertex or a second-level vertex. Suppose two first-level vertices $v_{i_{1},j_{1},\ell_{1}}$ and $v_{i_{2},j_{2},\ell_{2}}$ are adjacent. If there exists no $m \in [k-3]$ such that $v_{i_{2},j_{2},\ell_{2}} = v_{i_{1},j_{1},\ell_{1},m}$ nor $m' \in [k-3]$ such that $v_{i_{1},j_{1},\ell_{1}} = v_{i_{2},j_{2},\ell_{2},m'}$, then the edge $\left\{v_{i_{1},j_{1},\ell_{1}}, v_{i_{2},j_{2},\ell_{2}}\right\}$ only serves to unnecessarily increase the density. Thus we may assume that two first-level vertices are adjacent only if at least one of them coincides with a second-level vertex. We call an edge with both end-points first-level vertices a \emph{first-level} edge.


\begin{lemma}\label{upper_bound_lem_2}
Suppose the number of skewed edges in $\Gamma$ is $g$ and that of first-level edges is $h$. Suppose $g + h \geq 2k^{2}$, then $\maxden(\Gamma) > k-2$ for all $k$ sufficiently large.
\end{lemma}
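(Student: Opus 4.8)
The plan is to rerun the edge- and vertex-count from the proof of Lemma~\ref{upper_bound_lem_1}, but to extract the surplus density from the $g$ skewed edges and the $h$ first-level edges rather than from an upper bound on the number of distinct first-level vertices. Concretely, I would let $H$ be exactly the subgraph of $\Gamma$ considered in the proof of Lemma~\ref{upper_bound_lem_1}: the one induced on all roots, all ground vertices, all $\lambda$ distinct first-level vertices, and all $\mu$ distinct universal vertices. Since every skewed edge joins a ground vertex to a first-level vertex and every first-level edge joins two first-level vertices, all $g+h$ of these edges already lie inside $H$.

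The key bookkeeping point is that these $g+h$ edges are disjoint from the edges already counted in the proof of Lemma~\ref{upper_bound_lem_1} --- the ${k-3 \choose 2}$ root--root edges, the $(k-5){k-3 \choose 2}$ root--ground edges, the $\lambda(k-4)$ root--first-level edges, the $(k-3){k-3 \choose 2}$ ``standard'' ground--first-level edges $\{v_{i,j},v_{i,j,\ell}\}$, and the $\mu(k-2)+\lambda$ edges incident to a universal vertex. Indeed, for $k$ large the roots, ground vertices, first-level vertices and universal vertices form pairwise disjoint classes, so a first-level edge (two first-level endpoints) is none of the listed edges and, having no ground endpoint, is distinct from every skewed edge; distinct first-level edges are counted once in $h$. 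A skewed edge $\{v_{i,j,\ell},v_{i_1,j_1}\}$ with $\{i_1,j_1\}\neq\{i,j\}$ has one ground and one first-level endpoint, so it could only collide with a standard edge $\{v_{i',j'},v_{i',j',\ell'}\}$; that would force $\{i_1,j_1\}=\{i',j'\}$ and then $v_{i,j,\ell}=v_{i_1,j_1,\ell'}$, contradicting that $v_{i,j,\ell}$ is a \emph{unique} first-level vertex, which is part of the definition of a skewed edge. Hence
\[
e(H)\ \ge\ {k-3 \choose 2}(2k-7)+\lambda(k-3)+\mu(k-2)+g+h,\qquad v(H)={k-3 \choose 2}+(k-3)+\lambda+\mu,
\]
and the algebra already carried out in the proof of Lemma~\ref{upper_bound_lem_1} gives
\[
\rho(H)\ \ge\ (k-2)+\frac{\tfrac12(k-3)^2(k-8)-\lambda+g+h}{\tfrac12(k-3)(k-2)+\lambda+\mu}.
\]

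Finally I would bound the deficit: there are exactly $(k-3){k-3 \choose 2}=\tfrac12(k-3)^2(k-4)$ index tuples $(\{i,j\},\ell)$, so $\lambda\le\tfrac12(k-3)^2(k-4)$ and therefore $\tfrac12(k-3)^2(k-8)-\lambda\ge-2(k-3)^2$. Plugging in $g+h\ge 2k^2$,
\[
\rho(H)\ \ge\ (k-2)+\frac{2k^2-2(k-3)^2}{\tfrac12(k-3)(k-2)+\lambda+\mu}\ =\ (k-2)+\frac{12k-18}{\tfrac12(k-3)(k-2)+\lambda+\mu}\ >\ k-2
\]
for all $k\ge 2$, whence $\maxden(\Gamma)\ge\rho(H)>k-2$; the ``$k$ sufficiently large'' hypothesis is needed only to make the four vertex classes disjoint, as used in Lemma~\ref{upper_bound_lem_1}.

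The real content, and the step I expect to be the main obstacle, is the disjointness check above: one has to be sure the $g+h$ edges are genuinely new, and this is exactly where the ``unique first-level vertex'' clause in the definition of a skewed edge (together with the earlier reduction that two first-level vertices are adjacent only when one of them coincides with a second-level vertex) is used. The rest is then just the observation that the worst-case deficit $\tfrac12(k-3)^2(k-8)-\lambda$ is only $\Theta(k^2)$ --- the cubic leading terms cancel precisely because each distinct first-level vertex also carries an edge to a universal vertex --- so the threshold $2k^2$ in the hypothesis is exactly the amount of extra density needed.
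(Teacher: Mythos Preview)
Your proof is correct and follows essentially the same approach as the paper: the same subgraph $H$, the same edge count augmented by $g+h$, the same algebraic simplification leading to $\rho(H)\ge(k-2)+\bigl(2k^2-2(k-3)^2\bigr)/\bigl(\tfrac12(k-3)(k-2)+\lambda+\mu\bigr)$. Your disjointness check --- in particular the observation that a skewed edge cannot collide with a standard edge $\{v_{i',j'},v_{i',j',\ell'}\}$ because that would force the unique first-level vertex $v_{i,j,\ell}$ to coincide with $v_{i_1,j_1,\ell'}$ --- is exactly the point the paper leaves implicit when it writes ``As in Lemma~\ref{upper_bound_lem_1}, we argue that\ldots''.
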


\begin{proof}
Consider the subgraph $H$ of $\Gamma$ comprising:
\renewcommand{\labelenumi}{\theenumi}
\renewcommand{\theenumi}{(\roman{enumi})}
\begin{enumerate*}
\item all roots,
\item all ground vertices, 
\item all first-level vertices (let the number of distinct first-level vertices be $\lambda$, as in Lemma~\ref{upper_bound_lem_1}),
\item all universal vertices (let the number of distinct universal vertices be $\mu$).
\end{enumerate*}
As in Lemma~\ref{upper_bound_lem_1}, we argue that
\begin{align}
\rho(H) & \geq \frac{{k-3 \choose 2} + (k-5){k-3 \choose 2} + \lambda(k-4) + (k-3){k-3 \choose 2} + g + \ell + \mu(k-2) + \lambda}{(k-3) + {k-3 \choose 2} + \lambda + \mu} \nonumber\\
&= k-2 + \frac{\frac{1}{2}(k-3)^{2}(k-8) + (g+\ell) - \lambda}{\frac{1}{2}(k-3)(k-2) + \lambda + \mu} \nonumber\\
&\geq k-2 + \frac{\frac{1}{2}(k-3)^{2}(k-8) + 2k^{2} - (k-3){k-3 \choose 2}}{\frac{1}{2}(k-3)(k-2) + \lambda + \mu} \nonumber\\
&= k-2 + \frac{2k^{2} - 2(k-3)^{2}}{\frac{1}{2}(k-3)(k-2) + \lambda + \mu} > k-2. 
\end{align}


\end{proof}

Lemmas~\ref{upper_bound_lem_1} and \ref{upper_bound_lem_2} show that we only need to consider the scenario where $a > \left(\frac{1}{2} - \epsilon\right)k^{3}$ for every $\epsilon \in (0, 1/2)$ and $\ell + g < 2k^{2}$. As $k \rightarrow \infty$, this implies that there are at least $\left(\frac{1}{2} - \epsilon\right)k^{3}$ unique first-level vertices which are non-adjacent to all other first-level vertices and all ground vertices other than the corresponding ground vertex, i.e.\ such a unique first-level vertex $v_{i,j,\ell}$ is adjacent to no ground vertex $v_{i',j'}$ with $\{i', j'\} \neq \{i, j\}$. We call such a unique first-level vertex \emph{pure}. No second-level vertex can coincide with a pure first-level vertex.

\begin{lemma}\label{upper_bound_lem_3}
For any $\epsilon \in (0, 1/8)$, if $\Gamma$ contains $\widetilde{a} \geq \left(\frac{1}{2} - \epsilon\right)k^{3}$ pure first-level vertices and $b < \frac{k^{4}}{4}$, then $\maxden(\Gamma) > k-2$ for all $k$ sufficiently large.
\end{lemma}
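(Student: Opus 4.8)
The plan is to handle this last remaining case by exhibiting a single subgraph $H$ of $\Gamma$ with $\rho(H)>k-2$, using that there are $\widetilde a\geq\left(\frac12-\epsilon\right)k^{3}$ pure first-level vertices while $b<\frac{k^{4}}{4}$. First I would record the structural consequences of purity: a pure first-level vertex $v_{i,j,\ell}$ is non-adjacent to every other first-level vertex and to every ground vertex other than $v_{i,j}$; by the remark preceding the lemma no second-level vertex coincides with it; and (as built into the construction, reflected in the definition of uniqueness) no second-level vertex coincides with a root or a ground vertex. Hence each of its $k-3$ second-level vertices $v_{i,j,\ell,m}$ $(m\in[k-3])$ is neither a root, nor a ground vertex, nor a first-level vertex, nor a universal vertex, and the $k-3$ of them are pairwise distinct. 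Let $s$ be the number of distinct second-level vertices lying below pure first-level vertices and $\mu$ the number of distinct universal vertices $w_{i,j,\ell}$ with $v_{i,j,\ell}$ pure; I would take $H$ to be the subgraph of $\Gamma$ induced on all roots, all ground vertices, all pure first-level vertices, all second-level vertices of pure first-level vertices, and those $\mu$ universal vertices.

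Next I would count $v(H)=(k-3)+\binom{k-3}{2}+\widetilde a+s+\mu$, and, charging every edge incident to a pure first-level vertex to the first-level side (there are no edges between two pure first-level vertices), obtain
\begin{equation*}
e(H)\ \geq\ \binom{k-3}{2}(k-4)+\widetilde a\,(2k-5)+s\,(k-3)+\mu\,(k-2),
\end{equation*}
where the terms count respectively root--root and root--ground edges; at each pure first-level vertex its $k-4$ root-, one ground-, $k-3$ second-level- and one universal-edges; at each second-level vertex below a pure first-level vertex its $k-4$ root- and at least one ground-edge; and at each of the $\mu$ universal vertices its $k-3$ root- and at least one ground-edge. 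A direct comparison of this numerator with $(k-2)v(H)$ yields
\begin{equation*}
e(H)-(k-2)\,v(H)\ \geq\ \big(\widetilde a\,(k-3)-s\big)-2(k-3)^{2},
\end{equation*}
so the lemma reduces to showing that the number $\widetilde a(k-3)-s$ of coincidences among the second-level vertices of pure first-level vertices exceeds $2(k-3)^{2}$.

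The heart of the matter is this last count, and it is where I expect the real work to lie. There are $\widetilde a(k-3)\geq\left(\frac12-\epsilon\right)(k-3)k^{3}$ pairs $(v_{i,j,\ell},m)$ with $v_{i,j,\ell}$ pure, and each distinct second-level vertex below a pure first-level vertex is of exactly one of three types: (a) it realises $v_{\cdot,\cdot,\cdot,m}$ for at least two such pairs; (b) it realises exactly one such pair but also coincides with a second-level vertex of a non-pure first-level vertex; (c) it is unique. Type (c) accounts for at most $b<\frac{k^{4}}{4}$ pairs. For type (b): the number of non-pure first-level vertices is $\lambda-\widetilde a$, where $\lambda\leq\frac12(k-3)^{2}(k-4)$ is the total number of distinct first-level vertices, so $\lambda-\widetilde a\leq\epsilon k^{3}$ for all large $k$, and the number of type-(b) pairs is therefore at most $(k-3)(\lambda-\widetilde a)\leq\epsilon k^{4}(1+o(1))$. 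Consequently the number of type-(a) pairs is at least $\left(\frac12-\epsilon\right)(k-3)k^{3}-\frac{k^{4}}{4}-\epsilon k^{4}(1+o(1))=\left(\frac14-2\epsilon\right)k^{4}(1+o(1))$, a positive constant multiple of $k^{4}$ precisely because $\epsilon<\frac18$. Since a type-(a) vertex realising $d\geq2$ pairs contributes $d-1\geq\frac d2$ to $\widetilde a(k-3)-s$, summing over such vertices gives $\widetilde a(k-3)-s\geq\frac12\left(\frac14-2\epsilon\right)k^{4}(1+o(1))\gg2(k-3)^{2}$ for all large $k$, whence $\maxden(\Gamma)\geq\rho(H)>k-2$.

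The main obstacle, then, is the bookkeeping: isolating precisely which coincidences of second-level vertices raise the edge-to-vertex ratio, and showing that the ``leakage'' of such coincidences into second-level vertices of non-pure first-level vertices (type (b)) is negligible. It is this leakage bound that forces the hypothesis $\epsilon<\frac18$, since the number of non-pure first-level vertices is controlled, via Lemmas~\ref{upper_bound_lem_1} and~\ref{upper_bound_lem_2}, only by $\epsilon k^{3}$. One must also be careful with the structural claims used in the edge count — in particular that a second-level vertex of a pure first-level vertex can be neither a root, nor a ground vertex, nor a first-level vertex — since these are exactly what pin down the contributions $\widetilde a(2k-5)$ and $s(k-3)$ above.
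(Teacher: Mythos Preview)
Your argument is correct and takes essentially the paper's route: the same subgraph $H$, the same edge count (your $s$ is the paper's $\widetilde{\lambda}$), and the same reduction to $\widetilde{a}(k-3)-s>2(k-3)^{2}$. Where you partition the second-level vertices below pure first-level vertices into types (a)/(b)/(c) and bound $\widetilde{a}(k-3)-s$ from below, the paper instead bounds $s$ from above directly via $s\le b+\tfrac{1}{2}\bigl[(k-3)^{2}\binom{k-3}{2}-b\bigr]<\tfrac{3k^{4}}{8}$ (every non-unique second-level slot pairs off with at least one other), which is shorter and yields the same $(\tfrac{1}{8}-\epsilon)k^{4}$ margin. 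One small slip in your version: the bound on type-(b) pairs should use the number of non-pure first-level \emph{slots}, namely $(k-3)\binom{k-3}{2}-\widetilde{a}$, rather than the number of distinct non-pure first-level \emph{vertices} $\lambda-\widetilde{a}$, since a non-unique first-level vertex can occupy several slots and hence underlie more than $k-3$ second-level slots; both quantities are $\le\epsilon k^{3}(1+o(1))$, so the conclusion is unaffected.
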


\begin{proof}
Let $\widetilde{\lambda}$ be the number of second-level vertices in $\Gamma$ with the corresponding first-level vertices pure. Recall that the number of unique second-level vertices in $\Gamma$ with the corresponding first-level vertices also unique is $b$, and the pure first-level vertices form a subset of the unique first-level vertices. If $v_{i,j,\ell,m}$ is a non-unique second-level vertex with the corresponding first-level vertex $v_{i,j,\ell}$ pure, then $v_{i,j,\ell,m}$ coincides with another second-level vertex.



There are at most $b$ unique second-level vertices $v_{i,j,\ell,m}$ with $v_{i,j,\ell}$ pure, and $(k-3)^{2} {k-3 \choose 2} - b$, not necessarily distinct, second-level vertices that are either not unique, or unique but the corresponding first-level vertices are not unique. Thus there are at most $(k-3)^{2} {k-3 \choose 2} - b$ second-level vertices that coincide with some other second-level vertices, and hence at most $\frac{1}{2}\left\{(k-3)^{2} {k-3 \choose 2} - b\right\}$ distinct second-level vertices each of which is non-unique and the corresponding first-level vertex is pure. This gives us 
\begin{align}
\widetilde{\lambda} \leq b + \frac{1}{2}\left\{(k-3)^{2} {k-3 \choose 2} - b\right\} < \frac{3k^{4}}{8},
\end{align}
since $b < \frac{k^{4}}{4}$. Suppose $H$ is the subgraph of $\Gamma$ comprising
\renewcommand{\labelenumi}{\theenumi}
\renewcommand{\theenumi}{(\roman{enumi})}
\begin{enumerate*}
\item all roots,
\item all ground vertices, 
\item all pure first-level vertices,
\item all second-level vertices whose corresponding first-level vertices are pure,
\item all universal vertices whose corresponding first-level vertices are pure, and let the number of distinct such vertices be $\mu$.
\end{enumerate*}

There are ${k-3 \choose 2}$ edges in $H$ with both end-points roots and $(k-5){k-3 \choose 2}$ edges with one end-point a ground vertex and another a root. Each pure first-level vertex is adjacent to $k-4$ roots and $1$ ground vertex -- thus there are $\widetilde{a}(k-3)$ edges in $H$ with one end-point a pure first-level vertex and another a root or a ground vertex. The $k-3$ second-level vertices corresponding to a single pure first-level vertex are distinct from each other. Hence there are $\widetilde{a}(k-3)$ edges with one end-point a second-level vertex and the other a pure first-level vertex. Each second-level vertex in $H$ is adjacent to $k-4$ roots and at least one ground vertex. Thus there are at least $\widetilde{\lambda}(k-3)$ edges in $H$ with one end-point a second-level vertex and the other a root or a ground vertex. Each universal vertex is adjacent to $k-3$ roots and at least one ground vertex, hence there are at least $\mu(k-2)$ edges in $H$ with one end-point a universal vertex and the other a root or a ground vertex. Each pure first-level vertex is adjacent to a single universal vertex, hence there are $\widetilde{a}$ edges in $H$ with one end-point a universal vertex and the other a pure first-level vertex. Thus
\begin{align}
\rho(H) & \geq \frac{{k-3 \choose 2} + (k-5){k-3 \choose 2} + \widetilde{a}(k-3) + \widetilde{a}(k-3) + \widetilde{\lambda}(k-3) + \mu(k-2) + \widetilde{a}}{(k-3) + {k-3 \choose 2} + \widetilde{a} + \widetilde{\lambda} + \mu} \nonumber\\
&= k-2 + \frac{\widetilde{a}(k-3) - \widetilde{\lambda} - 2(k-3)^{2}}{(k-3) + {k-3 \choose 2} + \widetilde{a} + \widetilde{\lambda} + \mu} \nonumber\\
&> k-2 + \frac{\left(\frac{1}{2} - \epsilon\right)k^{3}(k-3) - \frac{3k^{4}}{8} - 2(k-3)^{2}}{(k-3) + {k-3 \choose 2} + \widetilde{a} + \widetilde{\lambda} + \mu},
\end{align}
hence as long as $\epsilon \in (0, 1/8)$, we have $\maxden(\Gamma) \geq \rho(H) > k-2$ for all $k$ large enough. 


\end{proof}

Combining everything, we have the proof of Theorem~\ref{main_upper_bound}. Let $\Sigma_{k}^{0} = \left\{S_{1}, \ldots, S_{m}\right\}$, for some $m \in \mathbb{N}$, denote the subset of $\Sigma_{k}$ comprising only those graphs whose maximal density equals $\alpha_{0}^{-1}$ as defined in Theorem~\ref{main_upper_bound}. Let $H_{i}$ be a strictly balanced subgraph of $S_{i}$ with $\rho(H_{i}) = \maxden(S_{i}) = \alpha_{0}^{-1}$ for all $i \in [m]$. As $H_{1}, \ldots, H_{m}$ are non-isomorphic and strictly balanced, from Theorem~\ref{strictly_balanced_no._copies_thm} we have 
\begin{align}\label{not_0-1_eq_1}
& \Prob\left[\bigcup_{i=1}^{m} \left\{G\left(n, n^{-\alpha_{0}}\right) \text{ contains induced } H_{i}\right\}\right] = 1 - \Prob\left[N_{H_{i}}^{n} = 0 \forall i \in [m]\right] \rightarrow 1 - \prod_{i=1}^{m} e^{-\lambda_{i}},
\end{align}
where $\lambda_{i} = a_{i}^{-1}$, with $a_{i}$ the number of automorphisms of $H_{i}$, for $i \in [m]$. For any $S \in \Sigma_{k} \setminus \Sigma_{k}^{0}$, if $H$ is a strictly balanced subgraph of $S$ with $\rho(H) = \maxden(S) > \alpha_{0}^{-1}$, then by Theorem~\ref{threshold}, a.a.s.\ $G\left(n, n^{-\alpha_{0}}\right)$ contains no induced copy of $H$ and hence no induced copy of $S$. Thus
\begin{align}\label{not_0-1_eq_2}
& \Prob\left[G\left(n, n^{-\alpha_{0}}\right) \models \varphi_{k}\right] = \Prob\left[\bigcup_{\Gamma \in \Sigma_{k}} \left\{G\left(n, n^{-\alpha_{0}}\right) \text{ contains induced } \Gamma\right\}\right] \nonumber\\
\leq & \Prob\left[\bigcup_{\Gamma \in \Sigma_{k}^{0}} \left\{G\left(n, n^{-\alpha_{0}}\right) \text{ contains induced } \Gamma\right\}\right] + \sum_{\Gamma \in \Sigma_{k} \setminus \Sigma_{k}^{0}} \Prob\left[G\left(n, n^{-\alpha_{0}}\right) \text{ contains induced } \Gamma\right] \nonumber\\
=& \Prob\left[\bigcup_{i=1}^{m} \left\{G\left(n, n^{-\alpha_{0}}\right) \text{ contains induced } S_{i}\right\}\right] + o(1).
\end{align}
Since $H_{i}$ is a subgraph of $S_{i}$, hence the event $\left\{G\left(n, n^{-\alpha_{0}}\right) \text{ contains induced } S_{i}\right\}$ implies that the event $\left\{G\left(n, n^{-\alpha_{0}}\right) \text{ contains induced } H_{i}\right\}$ holds. Hence we can write 
\begin{align}\label{not_0-1_eq_3}
\Prob\left[\bigcup_{i=1}^{m} \left\{G\left(n, n^{-\alpha_{0}}\right) \text{ contains induced } S_{i}\right\}\right] \leq \Prob\left[\bigcup_{i=1}^{m} \left\{G\left(n, n^{-\alpha_{0}}\right) \text{ contains induced } H_{i}\right\}\right],
\end{align}
so that, combining \eqref{not_0-1_eq_1} and \eqref{not_0-1_eq_3}, we conclude that
\begin{align}
\limsup_{n \rightarrow \infty} \Prob\left[\bigcup_{i=1}^{m} \left\{G\left(n, n^{-\alpha_{0}}\right) \text{ contains induced } S_{i}\right\}\right] \leq 1 - \prod_{i=1}^{m} e^{-\lambda_{i}}.
\end{align}
From Theorem~\ref{unbalanced_no._copies_thm}, noting that $\maxden(S_{1}) = \alpha_{0}^{-1}$, we get
\begin{multline}
\liminf_{n \rightarrow \infty} \Prob\left[\bigcup_{i=1}^{m} \left\{G\left(n, n^{-\alpha_{0}}\right) \text{ contains induced } S_{i}\right\}\right] \\ \geq \liminf_{n \rightarrow \infty} \Prob\left[G\left(n, n^{-\alpha_{0}}\right) \text{ contains induced } S_{1}\right] > 0,
\end{multline}
as desired.

\section{Lower bound}\label{lower}
To get a lower bound on $\alpha_{k}$ as defined in Theorem~\ref{MAIN}, we consider $\EHR\left[G_{1}, G_{2}, k\right]$ where $G_{1} \sim G(m, m^{-\alpha})$ and $G_{2} \sim G(n, n^{-\alpha})$ are generated independently, for $\alpha = \left(k - 2 - t(k)\right)^{-1}$ with a suitable $t(k) = \Theta(k^{-2})$. Assume without loss of generality that Spoiler plays on $G_{1}$, the vertices chosen from $G_{1}$ are denoted $x_{i}$ and those from $G_{2}$ denoted $y_{i}$, $i \in [k]$. From Lemma~\ref{r_ext_suff_cond}, when $\alpha < (k-3)^{-1}$, the full level-$(k-3)$ extension property holds a.a.s.\ in $G(n, n^{-\alpha})$, which Duplicator uses for the first $k-4$ rounds to respond to Spoiler (she can use this property up to round $k-2$, but doing so may result in her losing). For $m \in \mathbb{N}$, we define 
\begin{equation}\label{adj_notation}
\mathfrak{E}_{m} = \left\{\vec{e} = (e_{1}, \ldots, e_{m}): e_{i} \in \{0,1\}\right\},
\end{equation}
and denote $|\vec{e}| = \sum_{i=1}^{m} e_{i}$ for any $\vec{e} \in \mathfrak{E}_{m}$. In round $k-3$, Spoiler chooses $x_{k-3}$ such that $x_{k-3} \sim x_{i}$ iff $e_{i} = 1$ for $i \in [k-4]$, for some $\vec{e} \in \mathfrak{E}_{k-4}$. 

Suppose $H$ is a graph on vertices $a_{1}, \ldots, a_{k-4}$, and let a vertex $a_{k-3}$ be adjacent to $a_{i}$ iff $e_{i} = 1$ for all $i \in [k-4]$, for the $\vec{e}$ mentioned above. Consider the graph $G$, with $H \subset G$, where $G \setminus H$ comprises the following vertices, edges and non-edges:
\renewcommand{\theenumi}{(\roman{enumi})}
\begin{enumerate}
\item $a_{k-2}$, adjacent to all of $a_{1}, \ldots, a_{k-3}$;
\item $a_{k-1}$, adjacent to all of $a_{1}, \ldots, a_{k-2}$;
\item $a_{k-1,j}$, adjacent to $a_{\ell}$ for all $\ell \in [k-2] \setminus \{j\}$ and non-adjacent to $a_{j}$ for all $j \in [k-2]$;
\item $a_{k}$, adjacent to all of $a_{1}, \ldots, a_{k-2}, a_{k-1}$;
\item $a_{k}^{i}$, adjacent to $a_{\ell}$ for all $\ell \in [k-1] \setminus \{i\}$ and non-adjacent to $a_{i}$, for all $i \in [k-2]$;
\item $a_{k}^{k-1,j}$, adjacent to $a_{1}, \ldots, a_{k-2}, a_{k-1,j}$ for all $j \in [k-2]$;
\item $a_{k,i}^{k-1,j}$, adjacent to $a_{\ell}$ for $\ell \in [k-2] \setminus \{i\}$ and to $a_{k-1,j}$, and non-adjacent to $a_{i}$ for $i, j \in [k-2]$.
\end{enumerate}

\begin{lemma}\label{lower_bound_lem_1}
The pair $(G, H)$ is $\alpha$-safe for some $\alpha = \left(k-2-t(k)\right)^{-1}$ with $t(k) = \Theta(k^{-2})$.
\end{lemma}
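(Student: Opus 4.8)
The plan is to verify the defining inequality $f_{\alpha}(S, H) > 0$ for every $S$ with $H \subset S \subseteq G$, by showing that the extension $G/H$ is built up from a sequence of ``cheap'' one-vertex extensions, each of which adds a vertex together with either $k-2$ or $k-3$ edges, and then choosing $t(k)$ so that even the worst such subextension has positive $f_{\alpha}$-value. First I would compute $v(G,H)$ and $e(G,H)$: counting the seven families of added vertices, $v(G,H) = 2 + (k-2) + 1 + (k-2) + (k-2) + (k-2)^{2} = \Theta(k^{2})$, and the edges are dominated by the $a_{k,i}^{k-1,j}$ block, which contributes $(k-2)^{2} \cdot (k-2)$ edges, so $e(G,H) = \Theta(k^{3})$ and the overall ``average cost'' $e(G,H)/v(G,H) \approx k-2$. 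The point is that this average is achieved essentially uniformly: each added vertex, when inserted in the natural order ($a_{k-2}$, then $a_{k-1}$, then the $a_{k-1,j}$, then $a_k$, then the $a_k^i$, then the $a_k^{k-1,j}$, then the $a_{k,i}^{k-1,j}$), sees at most $k-2$ vertices already present to which it must be joined. So for any $H \subset S \subseteq G$, writing $S$ as the result of adding its new vertices one at a time in this order, each step contributes at most $k-2$ edges while contributing one vertex, giving $f_{\alpha}(S,H) \geq v(S,H)\bigl(1 - \alpha(k-2)\bigr)$ as a first crude bound — but this is negative once $\alpha \geq (k-2)^{-1}$, so I need to be sharper.

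The refinement is to note that not every added vertex costs the full $k-2$ edges: the vertices $a_k^i$ and $a_{k,i}^{k-1,j}$ are each joined to only $k-3$ of the roots $a_1,\dots,a_{k-2}$ (plus, for the latter, one extra vertex $a_{k-1,j}$), so they cost $k-3$ and $k-2$ edges respectively, while $a_{k-1,j}$ costs $k-2$. The genuinely expensive vertices are $a_{k-2}, a_{k-1}, a_k, a_k^{k-1,j}$, each costing roughly $k-2$. Thus in a subextension $S$ the number of $(k-2)$-cost vertices is at most $O(k)$ (there are only $O(k)$ such vertices in all of $G \setminus H$), while the bulk $\Theta(k^2)$ of the vertices — the $a_k^i$ and the $a_{k,i}^{k-1,j}$ and, if present in the right configuration, the $a_{k-1,j}$ relative to an earlier $a_k^i$ — can be arranged to cost strictly fewer than $\alpha^{-1}$ edges each provided $\alpha^{-1} < k-2$, i.e. provided $t(k) > 0$. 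Concretely I would split $f_{\alpha}(S,H)$ as $\sum_{\text{added } v}\bigl(1 - \alpha\,d^{-}(v)\bigr)$ where $d^{-}(v)$ is the number of edges from $v$ back to vertices preceding it in the chosen order; the ``light'' vertices contribute $\geq (\text{number of light vertices})(1 - \alpha(k-3)) = \Theta(k^2)\cdot\Theta(k^{-2}) \cdot t(k)/(k-2)$ worth of positive slack — actually $\Theta(k^2)\cdot(1-\alpha(k-3))$, and since $1 - \alpha(k-3) = 1 - (k-3)/(k-2-t(k)) = (1-t(k))/(k-2-t(k)) = \Theta(1/k)$ this is $\Theta(k)$ — while the $O(k)$ heavy vertices contribute at worst $O(k)\cdot\bigl(1 - \alpha(k-2)\bigr) = O(k)\cdot\bigl(-t(k)/(k-2-t(k))\bigr) = O(k)\cdot\Theta(k^{-3}) = \Theta(k^{-2})$ of negative slack. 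So the positive contribution $\Theta(k)$ from the light vertices dominates, and $f_\alpha(S,H) > 0$.

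The main obstacle, and the step requiring the most care, is the ordering argument: I must check that the seven families really can be linearly ordered so that each added vertex $v$ has $d^{-}(v) \leq k-2$ against all \emph{previously placed} vertices, and — for the slack estimate — identify precisely which vertices have $d^-(v) \leq k-3$ when one restricts to an arbitrary subset $S$. The subtlety is that a vertex like $a_{k,i}^{k-1,j}$ is adjacent to $a_{k-1,j}$, so if $a_{k-1,j} \in S$ it must be placed before $a_{k,i}^{k-1,j}$; one must confirm that this forced ordering never pushes the back-degree of any vertex above $k-2$, using that $a_{k,i}^{k-1,j}$ misses $a_i$ (hence sees only $k-3$ roots) to absorb the extra edge to $a_{k-1,j}$. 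A clean way to organize this is: place $H$, then $a_{k-2}$, then all $a_{k-1,j}$ present (each adjacent to $k-2$ of $H \cup \{a_{k-2}\}$, since $a_{k-1,j} \sim a_{k-2}$ and $a_{k-1,j}$ sees $k-3$ of the first $k-3$ roots plus $a_{k-2}$... here one must be careful about which roots $a_{k-1,j}$ misses), then $a_{k-1}$, then all $a_k^i$ (each adjacent to $k-3$ roots, and to $a_{k-1}$, so back-degree $k-2$), then $a_k$, then all $a_k^{k-1,j}$, then all $a_{k,i}^{k-1,j}$. I would carry out this bookkeeping explicitly, extract the exact constant $c$ with $t(k) = c k^{-2}(1+o(1))$ for which every $d^-(v) < \alpha^{-1}$ strictly (or every heavy vertex's deficit is outweighed), and thereby fix the admissible $t(k)$; the inequality $f_\alpha(S,H)>0$ for all intermediate $S$ then follows from the per-vertex slack bound above by summation over the vertices of $S \setminus H$.
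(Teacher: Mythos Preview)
Your ordering framework is the right idea and matches the paper's setup, but the argument has a genuine gap: the claim that the vertices can be ordered so that every back-degree satisfies $d^-(v) \leq k-2$ is false. Consider just $\{a_{k-3}, a_{k-2}, a_{k-1}, a_k\}$: these four vertices form a clique and each is adjacent to all $k-4$ vertices of $H$, so whichever one is placed last has back-degree $(k-4)+3 = k-1$. More generally, both $a_k$ and every $a_k^{k-1,j}$ have back-degree $k-1$ in the natural layer order, and no reordering eliminates all such vertices. (You also miscount $a_k^i$: it is adjacent to $a_{k-1}$ as well, so its back-degree is $k-2$, not $k-3$.)

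Because of this, your slack tally in the second paragraph only establishes $f_\alpha(G,H) > 0$, not $f_\alpha(S,H) > 0$ for every intermediate $S$. The dangerous $S$ are those that include many of the $(k-1)$-degree vertices and few compensating light ones; your ``$\Theta(k^2)$ light versus $O(k)$ heavy'' count is a global statement about $G$, not about an arbitrary $S$. The missing observation---which is the heart of the paper's proof---is a \emph{pairing constraint}: a vertex $a_k^{k-1,j}$ can only realize back-degree $k-1$ inside $S$ if its partner $a_{k-1,j}$ also lies in $S$, and that partner has back-degree at most $k-3$; likewise $a_k$ needs $a_{k-1}$. Hence if $\gamma = |S \cap \{a_{k-1}, a_{k-1,j}\}|$, at most $\gamma$ vertices of $S \cap \{a_k, a_k^{k-1,j}\}$ bring $k-1$ edges, and each is matched by a $(k-3)$-edge partner, averaging to $k-2$ per pair. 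Combining this with the two genuinely cheap vertices $a_{k-3}, a_{k-2}$ (back-degree at most $k-4$ and $k-3$) gives $e(S,H)/v(S,H) \leq k-2 - 1/v(S,H) \leq k-2 - \Theta(k^{-2})$, uniformly in $S$. Without this pairing step your argument does not close.
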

Note that, in the enumeration of the vertices above, there are $4$ clear layers: the $(k-3)$-rd layer comprises $a_{k-3}$, the $(k-2)$-nd layer comprises $a_{k-2}$, the $(k-1)$-st layer comprises $a_{k-1}$ and $a_{k-1,j}$, $j \in [k-2]$, and the $k$-th layer the rest. In the proof, we assume that the vertices of any graph $S$ with $H \subset S \subseteq G$ are ordered such that those in a layer of smaller index appear earlier. For any vertex $u \in S$, we say that $u$ \emph{brings $e$ edges} if there are exactly $e$ edges between $u$ and the vertices in $S$ that appear \emph{before} $u$ in the above order.

\begin{proof}
For $S$ with $H \subset S \subseteq G$, suppose 
\begin{itemize}
\item $\mathcal{A}_{0} = S \cap \left\{a_{k}, a_{k}^{k-1,j}, j \in [k-2]\right\}$ and $\gamma_{0} = \left|\mathcal{A}_{0}\right|$, and each vertex in this set brings at most $k-1$ edges;
\item $\mathcal{A} = \left\{a_{k-1}, a_{k-1,j}: j \in [k-2]\right\} \cap S$ and $\gamma = |\mathcal{A}|$, at most one vertex from this set brings $k-2$ edges and every other vertex brings at most $k-3$ edges;
\item $\beta = \left|S \cap \left\{a_{k}^{i}, a_{k,i}^{k-1,j}: i, j \in [k-2]\right\}\right|$, and each vertex in this set brings at most $k-2$ edges;
\item $c = \left|S \cap \left\{a_{k-3}, a_{k-2}\right\}\right|$, and each vertex brings at most $k-3$ edges.
\end{itemize}
At most $\gamma$ vertices out of $\mathcal{A}_{0}$ bring $k-1$ edges each, and each of the rest at most $k-2$ edges. Let $\delta = \gamma_{0} - \gamma$ if $\gamma_{0} \geq \gamma$, and $\delta = 0$ otherwise. When $c = 0$, each vertex in $S$ brings at most $k-3$ edges, hence 
\begin{equation}\label{each_degree_leq_k-3}
\frac{e(S, H)}{v(S, H)} \leq k-3.
\end{equation}
When $c = 1$, the single vertex in the set $S \cap \left\{a_{k-3}, a_{k-2}\right\}$ brings $k-4$ edges. Thus
\begin{align}\label{density_c=1_lem_1}
\frac{e(S, H)}{v(S, H)} & \leq \frac{(\gamma_{0} - \delta)(k-1) + \delta(k-2) + (\gamma-1)(k-3) + (k-2) + \beta(k-2) + (k-4)}{\gamma_{0} + \gamma + \beta + 1} \nonumber\\
&= \frac{(\gamma_{0} - \delta)(k-1) + \delta(k-2) + \gamma(k-3) + 1 + \beta(k-2) + (k-4)}{(\gamma_{0} - \delta) + \gamma + \delta + \beta + 1},
\end{align}
and as $\delta(k-2) + \gamma(k-3) + 1 + \beta(k-2) + (k-4) < (k-1)\left\{\gamma + \delta + \beta + 1\right\}$, the fraction in \eqref{density_c=1_lem_1} is strictly increasing in $\gamma_{0} - \delta$. As $\gamma_{0} - \delta \leq \gamma$, hence  
\begin{align}
\frac{e(S, H)}{v(S, H)} & \leq \frac{\gamma(k-1) + \delta(k-2) + \gamma(k-3) + 1 + \beta(k-2) + (k-4)}{\gamma + \gamma + \delta + \beta + 1} \nonumber\\
&= \frac{\gamma(2k-4) + \delta(k-2) + \beta(k-2) + (k-3)}{2 \gamma + \delta + \beta + 1} \nonumber\\
&= k - 2 - \frac{1}{2 \gamma + \delta + \beta + 1} \leq k - 2 - \Theta\left(\frac{1}{k^{2}}\right).
\end{align}
When $c = 2$, we have
\begin{align}\label{density_c=2_lem_1}
\frac{e(S, H)}{v(S, H)} & \leq \frac{(\gamma_{0} - \delta)(k-1) + \delta(k-2) + (\gamma-1)(k-3) + (k-2) + \beta(k-2) + 2(k-3)}{\gamma_{0} + \gamma + \beta + 2} \nonumber\\
&= \frac{(\gamma_{0} - \delta)(k-1) + \delta(k-2) + \gamma(k-3) + 1 + \beta(k-2) + 2(k-3)}{(\gamma_{0} - \delta) + \gamma + \delta + \beta + 2}
\end{align} 
so that, since $\delta(k-2) + \gamma(k-3) + 1 + \beta(k-2) + 2(k-3) < (k-1)\left\{\gamma + \delta + \beta + 2\right\}$, the fraction in \eqref{density_c=2_lem_1} is strictly increasing in $\gamma_{0} - \delta$. As $\gamma_{0} - \delta \leq \gamma$, hence
\begin{align}
\frac{e(S, H)}{v(S, H)} & \leq \frac{\gamma(k-1) + \delta(k-2) + \gamma(k-3) + 1 + \beta(k-2) + 2(k-3)}{\gamma + \gamma + \delta + \beta + 2} \nonumber\\
&= \frac{\gamma(2k-4) + \delta(k-2) + \beta(k-2) + 1 + 2(k-3)}{2\gamma + \delta + \beta + 2} \nonumber\\
&= k - 2 - \frac{1}{2\gamma + \delta + \beta + 2} \leq k - 2 - \Theta\left(\frac{1}{k^{2}}\right).
\end{align} 

\end{proof} 

Theorem~\ref{safe_extension_copies} and Lemma~\ref{lower_bound_lem_1} together guarantee that, given $y_{1}, \ldots, y_{k-4}$ and $\vec{e}$ such that Spoiler's chosen $x_{k-3}$ is adjacent to $x_{i}$ iff $e_{i} = 1$ for all $i \in [k-4]$, there exist in $G_{2}$ vertices $a_{k-3}$, $a_{k-2}$, $a_{k-1}$, $a_{k}$, $a_{k-1,j}$ for $j \in [k-2]$, $a_{k}^{i}$ for $i \in [k-2]$, $a_{k}^{k-1,j}$ for $j \in [k-2]$, $a_{k,i}^{k-1,j}$ for $i, j \in [k-2]$, as described before Lemma~\ref{lower_bound_lem_1}, with $y_{1}, \ldots, y_{k-4}$ playing the roles of $a_{1}, \ldots, a_{k-4}$. Duplicator sets $y_{k-3} = a_{k-3}$. We split Spoiler's move in round $k-2$ into two possibilities, discussed in Subsections~\ref{k-2_case_1} and Subsection~\ref{k-2_case_2}.

\subsection{First possibility for round $k-2$:}\label{k-2_case_1} Suppose Spoiler chooses $x_{k-2}$ that is adjacent to all of $x_{1}, \ldots, x_{k-3}$. Then Duplicator selects $y_{k-2} = a_{k-2}$. If Spoiler selects $x_{k-1}$ adjacent to all of $x_{1}, \ldots, x_{k-2}$, Duplicator selects $y_{k-1} = a_{k-1}$. If Spoiler selects $x_{k}$ adjacent to $x_{1}, \ldots, x_{k-1}$, Duplicator sets $y_{k} = a_{k}$. If Spoiler selects $x_{k}$ that is adjacent to $x_{\ell}$ for all $\ell \in [k-1] \setminus \{i\}$ for some $i \in [k-2]$, Duplicator sets $y_{k} = a_{k}^{i}$. If Spoiler selects $x_{k}$ adjacent to $x_{1}, \ldots, x_{k-2}$ and non-adjacent to $x_{k-1}$, Duplicator sets $y_{k} = a_{k}^{k-1,1}$. By definition, $a_{k}^{k-1,1}$ is adjacent to $y_{1}, \ldots, y_{k-4}, y_{k-3}=a_{k-3}, y_{k-2}=a_{k-2}$ and $a_{k-1,1}$, but non-adjacent to $y_{k-1}=a_{k-1}$, and hence serves as a winning response for Duplicator.

If Spoiler selects $x_{k-1}$ adjacent to $x_{\ell}$ for all $\ell \in [k-2] \setminus \{j\}$ and non-adjacent to $x_{j}$, for some $j \in [k-2]$, Duplicator sets $y_{k-1} = a_{k-1,j}$. If Spoiler selects $x_{k}$ adjacent to $x_{1}, \ldots, x_{k-1}$, Duplicator sets $y_{k} = a_{k}^{k-1,j}$. If Spoiler selects $x_{k}$ adjacent to $x_{\ell}$ for all $\ell \in [k-1] \setminus \{i\}$ and non-adjacent to $x_{i}$, for some $i \in [k-2]$, Duplicator sets $y_{k} = a_{k,i}^{k-1,j}$. If Spoiler selects $x_{k}$ adjacent to $x_{1}, \ldots, x_{k-2}$ and non-adjacent to $x_{k-1}$, Duplicator sets $y_{k} = a_{k}$. By definition, $a_{k}$ is adjacent to $y_{1}, \ldots, y_{k-4}, y_{k-3} = a_{k-3}, y_{k-2} = a_{k-2}$ and $a_{k-1}$ but not to $y_{k-1} = a_{k-1,j}$, and hence serves as a winning response.

If in either of the two cases above, Spoiler selects $x_{k}$ that is adjacent to at most $k-3$ out of $x_{1}, \ldots, x_{k-1}$, then, denoting by $G$ and $H$ respectively the induced subgraphs of $G_{1}$ on $\{x_{1}, \ldots, x_{k}\}$ and $\{x_{1}, \ldots, x_{k-1}\}$, it is immediate that $(G, H)$ is $\alpha$-safe for $\alpha = (k-2-O(k^{-2}))^{-1}$. By Theorem~\ref{safe_extension_copies}, a.a.s.\ there exists $b$ in $G_{2}$ such that $b \sim y_{i}$ iff $x_{k} \sim x_{i}$ for all $i \in [k-1]$. Duplicator sets $y_{k} = b$. 

Suppose Spoiler selects $x_{k-1}$ that is adjacent to at most $k-4$ out of $x_{1}, \ldots, x_{k-2}$. Let $H$ be the induced subgraph of $G_{2}$ on $y_{1}, \ldots, y_{k-4}$, $y_{k-3} = a_{k-3}$, $y_{k-2} = a_{k-2}$, $a_{k-1}$ and $a_{k}$ (where $a_{i}$ for $k-3 \leq i \leq k$ are as in Lemma~\ref{lower_bound_lem_1}). Consider  
\renewcommand{\theenumi}{(\roman{enumi})}
\begin{enumerate}
\item $b_{k-1}$ with $b_{k-1} \sim y_{i}$ iff $x_{k-1} \sim x_{i}$ for all $i \in [k-2]$ and $b_{k-1} \sim a_{k-1}$, but $b_{k-1} \nsim a_{k}$;
\item $b_{k}^{i}$ with $b_{k}^{i} \sim b_{k-1}$ and $b_{k}^{i} \sim y_{\ell}$ for all $\ell \in [k-2] \setminus \{i\}$, but $b_{k}^{i} \nsim y_{i}$, for all $i \in [k-2]$;
\end{enumerate}
and let $G$ be the graph on $y_{1}, \ldots, y_{k-2}, a_{k-1}, a_{k}, b_{k-1}, b_{k}^{i}, i \in [k-2]$ with $H \subset G$ and $E(G) \setminus E(H)$ comprising precisely the edges and non-edges described above.
 
\begin{lemma}\label{lower_bound_lem_2}
The pair $(G, H)$ is $\alpha$-safe for $\alpha = (k-2-t(k))^{-1}$ for a suitable $t(k) = \Theta(k^{-2})$.
\end{lemma}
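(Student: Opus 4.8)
The plan is to reduce $\alpha$-safety of $(G,H)$ to a single estimate: for every $S$ with $H\subset S\subseteq G$ we will show $e(S,H)/v(S,H)\le k-2-\tfrac1{k-1}$, and since here $t(k)=\Theta(k^{-2})$ may be taken smaller than $\tfrac1{k-1}$ for all large $k$ (any $t(k)$ of the order required by Lemma~\ref{lower_bound_lem_1} will do, as the present gap is of larger order), this gives $\tfrac{e(S,H)}{v(S,H)}<k-2-t(k)=\alpha^{-1}$, i.e. $f_{\alpha}(S,H)=v(S,H)-\alpha\,e(S,H)>0$.

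First I would fix the order on $V(G)$ in which the $k$ vertices of $H$ (namely $y_1,\dots,y_{k-4},y_{k-3}=a_{k-3},y_{k-2}=a_{k-2},a_{k-1},a_{k}$) come first, then $b_{k-1}$, and then $b_{k}^{1},\dots,b_{k}^{k-2}$; as in the discussion preceding Lemma~\ref{lower_bound_lem_1}, $e(S,H)$ is the sum over $u\in S\setminus H$ of the number of edges from $u$ to the vertices of $S$ that precede it. The key input is the hypothesis of this subsection, that Spoiler's $x_{k-1}$ is adjacent to at most $k-4$ of $x_1,\dots,x_{k-2}$: this forces $b_{k-1}\sim y_{i}$ for at most $k-4$ indices $i\in[k-2]$, so together with the single edge $b_{k-1}\sim a_{k-1}$ and the non-edge $b_{k-1}\nsim a_{k}$, the vertex $b_{k-1}$ brings at most $k-3$ edges. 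Each $b_{k}^{i}$ is adjacent to $b_{k-1}$ and to the $k-3$ vertices $y_{\ell}$ with $\ell\in[k-2]\setminus\{i\}$, and (since $E(G)\setminus E(H)$ is exactly as described) to nothing else among the earlier vertices, so each $b_{k}^{i}$ brings at most $k-2$ edges.

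Then I would split into two cases according to whether $b_{k-1}\in S$. If $b_{k-1}\notin S$, every $b_{k}^{i}\in S$ is joined only to vertices of $H$ and hence brings at most $k-3$ edges, so $e(S,H)/v(S,H)\le k-3$. If $b_{k-1}\in S$, write $\beta=|S\cap\{b_{k}^{1},\dots,b_{k}^{k-2}\}|\le k-2$; then $v(S,H)=\beta+1$ and $e(S,H)\le (k-3)+\beta(k-2)$, so
\[
\frac{e(S,H)}{v(S,H)}\le\frac{(k-3)+\beta(k-2)}{\beta+1}=k-2-\frac{1}{\beta+1}\le k-2-\frac1{k-1}.
\]
In either case the ratio is at most $k-2-\tfrac1{k-1}<\alpha^{-1}$, giving $f_{\alpha}(S,H)>0$; as $S$ was arbitrary, $(G,H)$ is $\alpha$-safe.

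The computation is routine once the ordering is fixed; the step I would be most careful about is the bookkeeping of the edges $b_{k-1}$ brings, since it is precisely the ``at most $k-4$'' hypothesis on $x_{k-1}$ that caps this at $k-3$ rather than $k-1$ and thereby makes the pair $\alpha$-safe — note that if $x_{k-1}$ were adjacent to all of $x_1,\dots,x_{k-2}$ then already $S=H\cup\{b_{k-1}\}$ would have $f_{\alpha}(S,H)<0$. I would also explicitly record that there are no edges among the $b_{k}^{i}$ and no edges from $b_{k}^{i}$ to $a_{k-1}$ or $a_{k}$, so that the per-vertex counts above are genuine upper bounds on $e(S,H)$.
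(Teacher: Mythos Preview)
Your proof is correct and follows essentially the same approach as the paper's: both order the new vertices with $b_{k-1}$ first and the $b_k^i$ after, split according to whether $b_{k-1}\in S$, and in the interesting case bound $e(S,H)/v(S,H)$ by $\frac{(k-3)+\gamma(k-2)}{1+\gamma}=k-2-\frac{1}{\gamma+1}$. Your additional remark that the gap here is actually $\Theta(k^{-1})$ (hence comfortably larger than the required $\Theta(k^{-2})$) is a sharper observation than the paper records, but the argument is the same.
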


\begin{proof}
Consider $S$ with $H \subset S \subseteq G$. As in Lemma~\ref{lower_bound_lem_1}, there are two layers, the $(k-1)$-st comprising $b_{k-1}$ and the $k$-th one the rest. If $b_{k-1} \in S$, it brings at most $k-3$ edges. Let $\gamma = \left|S \cap \left\{b_{k}^{i}: i \in [k-2]\right\}\right|$: each vertex in this set brings $k-2$ edges. We then have 
\begin{align} 
\frac{e(S, H)}{v(S, H)} \leq \frac{(k-3) + (k-2)\gamma}{1 + \gamma} = k-2 - \frac{1}{\gamma+1} \leq k - 2 - \Theta\left(\frac{1}{k^{2}}\right). 
\end{align}
If $b_{k-1} \notin S$, then each vertex in $S$ brings at most $k-3$ edges, hence \eqref{each_degree_leq_k-3} holds.

\end{proof}

By Lemma~\ref{lower_bound_lem_2} and Theorem~\ref{safe_extension_copies}, the vertices $b_{k-1}$, $b_{k}^{i}$, $i \in [k-2]$, exist a.a.s.\ in $G_{2}$. Duplicator sets $y_{k-1} = b_{k-1}$. If Spoiler selects $x_{k}$ adjacent to all of $x_{1}, \ldots, x_{k-1}$, Duplicator sets $y_{k} = a_{k-1}$. If Spoiler selects $x_{k}$ that is adjacent to $x_{\ell}$ for all $\ell \in [k-1] \setminus \{i\}$ for some $i \in [k-2]$, and non-adjacent to $x_{i}$, Duplicator sets $y_{k} = b_{k}^{i}$. If Spoiler selects $x_{k}$ adjacent to $x_{1}, \ldots, x_{k-2}$ and non-adjacent to $x_{k-1}$, Duplicator sets $y_{k} = a_{k}$. By definition, $a_{k}$ is adjacent to all of $y_{1}, \ldots, y_{k-4}, y_{k-3} = a_{k-3}, y_{k-2} = a_{k-2}$ and $a_{k-1}$, but not to $b_{k-1}$, and hence serves as a winning response. If Spoiler selects $x_{k}$ that is adjacent to at most $k-3$ out of $x_{1}, \ldots, x_{k-1}$, we argue as above that Duplicator finds a winning response via Theorem~\ref{safe_extension_copies}.

\subsection{Second possibility for round $k-2$:}\label{k-2_case_2} Spoiler chooses $x_{k-2}$ that is adjacent to at most $k-4$ out of $x_{1}, \ldots, x_{k-3}$. Let $H$ be the induced subgraph of $G_{2}$ on $y_{1}, \ldots, y_{k-3}$. Consider \renewcommand{\theenumi}{(\roman{enumi})}
\begin{enumerate}
\item $c_{k-2}$ with $c_{k-2} \sim y_{i}$ iff $x_{k-2} \sim x_{i}$ for all $i \in [k-3]$;
\item $c_{k-1}$ which is adjacent to all of $y_{1}, \ldots, y_{k-3}, c_{k-2}$;
\item $c_{k-1,j}$ which is adjacent to $y_{\ell}$ for all $\ell \in [k-3] \setminus \{j\}$, adjacent to $c_{k-2}$, and non-adjacent to $y_{j}$, for all $j \in [k-3]$; $c_{k-1,k-2}$ which is adjacent to $y_{1}, \ldots, y_{k-3}$ and non-adjacent to $c_{k-2}$;
\item $c_{k}$ which is adjacent to all of $y_{1}, \ldots, y_{k-3}, c_{k-2}, c_{k-1}$;
\item $c_{k}^{i}$ which is adjacent to $y_{\ell}$ for all $\ell \in [k-3] \setminus \{i\}$, adjacent to $c_{k-2}$ and $c_{k-1}$, and non-adjacent to $y_{i}$, for all $i \in [k-3]$; $c_{k}^{k-2}$ which is adjacent to $y_{1}, \ldots, y_{k-3}, c_{k-1}$ and non-adjacent to $c_{k-2}$;
\item $c_{k}^{k-1,j}$ which is adjacent to all of $y_{1}, \ldots, y_{k-3}, c_{k-2}, c_{k-1,j}$, for all $j \in [k-2]$;
\item  for each $j \in [k-2]$, $c_{k,i}^{k-1,j}$ which is adjacent to $y_{\ell}$ for all $\ell \in [k-3] \setminus \{i\}$, to $c_{k-2}$ and $c_{k-1,j}$, and non-adjacent to $y_{i}$, for each $i \in [k-3]$; $c_{k,k-2}^{k-1,j}$ which is adjacent to $y_{1}, \ldots, y_{k-3}, c_{k-1,j}$ and non-adjacent to $c_{k-2}$.
\end{enumerate}
and let $G$ be the graph on $y_{1}, \ldots, y_{k-3}, c_{k-2}, c_{k-1}, c_{k-1,j}, j \in [k-2], c_{k}, c_{k}^{i}, i \in [k-2], c_{k}^{k-1,j}, j \in [k-2], c_{k,i}^{k-1,j}, i, j \in [k-2]$, with $H \subset G$ and $E(G) \setminus E(H)$ comprising precisely the edges and non-edges described above.

\begin{lemma}\label{lower_bound_lem_3}
The pair $(G, H)$ is $\alpha$-safe for $\alpha = (k-2-t(k))^{-1}$ for a suitable $t(k) = \Theta(k^{-2})$.
\end{lemma}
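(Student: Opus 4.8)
The plan is to reuse, almost verbatim, the layering and edge-counting scheme from the proof of Lemma~\ref{lower_bound_lem_1}. Fix $S$ with $H\subset S\subseteq G$. The vertices of $G\setminus H$ split into three layers: the $(k-2)$-nd is $\{c_{k-2}\}$, the $(k-1)$-st is $\{c_{k-1}\}\cup\{c_{k-1,j}:j\in[k-2]\}$, and the $k$-th is everything else; order the vertices of $S$ so that those in an earlier layer come first, and use the notion, from just before Lemma~\ref{lower_bound_lem_1}, of a vertex \emph{bringing} a given number of edges. Reading off the adjacencies listed before the statement one checks directly that, in $S$: (a) $c_{k-2}$ brings at most $k-4$ edges, since Spoiler's $x_{k-2}$ was adjacent to at most $k-4$ of $x_1,\ldots,x_{k-3}$; (b) in $\mathcal{A}:=S\cap(\{c_{k-1}\}\cup\{c_{k-1,j}:j\in[k-2]\})$ at most one vertex can bring $k-2$ edges (only $c_{k-1}$, and only when $c_{k-2}\in S$), every other vertex of $\mathcal{A}$ bringing at most $k-3$; (c) every vertex of $\mathcal{A}_0:=S\cap(\{c_k\}\cup\{c_k^{k-1,j}:j\in[k-2]\})$ brings at most $k-1$ edges, and brings $k-1$ only when both $c_{k-2}\in S$ and its unique ``parent'' in the $(k-1)$-st layer is in $S$ (the parent of $c_k$ being $c_{k-1}$, that of $c_k^{k-1,j}$ being $c_{k-1,j}$); (d) every other vertex of $S\setminus H$, that is, every vertex of $\mathcal{B}:=S\cap(\{c_k^i:i\in[k-2]\}\cup\{c_{k,i}^{k-1,j}:i,j\in[k-2]\})$, brings at most $k-2$ edges.

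Write $c=|S\cap\{c_{k-2}\}|\in\{0,1\}$, $\gamma=|\mathcal{A}|$, $\gamma_0=|\mathcal{A}_0|$, $\beta=|\mathcal{B}|$, and $\delta=\max\{\gamma_0-\gamma,0\}$, so $v(S,H)=c+\gamma+\gamma_0+\beta$. Because the parents in (c) are distinct and exhaust $\{c_{k-1}\}\cup\{c_{k-1,j}:j\in[k-2]\}$, at most $\min\{\gamma_0,\gamma\}=\gamma_0-\delta$ vertices of $\mathcal{A}_0$ bring $k-1$ edges, the rest bringing at most $k-2$. When $c=1$, summing the bounds in (a)--(d) yields
\[
\frac{e(S,H)}{v(S,H)}\le\frac{(\gamma_0-\delta)(k-1)+\delta(k-2)+\gamma(k-3)+1+\beta(k-2)+(k-4)}{(\gamma_0-\delta)+\delta+\gamma+\beta+1},
\]
which is exactly the expression handled in the $c=1$ case of Lemma~\ref{lower_bound_lem_1}: as $\delta(k-2)+\gamma(k-3)+1+\beta(k-2)+(k-4)<(k-1)(\gamma+\delta+\beta+1)$, the right-hand side is strictly increasing in $\gamma_0-\delta$, so substituting $\gamma_0-\delta=\min\{\gamma_0,\gamma\}\le\gamma$ and simplifying gives
\[
\frac{e(S,H)}{v(S,H)}\le(k-2)-\frac{1}{2\gamma+\delta+\beta+1}\le(k-2)-\Theta(k^{-2}),
\]
since $\gamma,\delta,\beta$ are each at most $v(G,H)=3+3(k-2)+(k-2)^2=O(k^2)$.

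When $c=0$, no vertex of $\mathcal{A}_0$ can bring $k-1$ edges and no vertex of $\mathcal{A}$ can bring $k-2$ (each would require $c_{k-2}\in S$), so $e(S,H)\le\gamma(k-3)+(\gamma_0+\beta)(k-2)=(k-2)v(S,H)-\gamma$; thus if $\gamma\ge1$ we get $e(S,H)/v(S,H)\le(k-2)-\gamma/v(S,H)\le(k-2)-1/v(G,H)=(k-2)-\Theta(k^{-2})$, while if $\gamma=0$ then $c_{k-2}$ and all $(k-1)$-st layer vertices are absent from $S$, there are no edges within the $k$-th layer, and hence every vertex of $S\setminus H$ has all its $S$-neighbours among $y_1,\ldots,y_{k-3}$, so it brings at most $k-3$ edges and $e(S,H)/v(S,H)\le k-3$. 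Combining all cases, there is an absolute constant $c_0>0$ with $e(S,H)/v(S,H)\le(k-2)-c_0k^{-2}$ for every $H\subset S\subseteq G$ and all sufficiently large $k$; taking $t(k)=\tfrac12 c_0k^{-2}=\Theta(k^{-2})$ then gives $f_\alpha(S,H)=v(S,H)-\alpha e(S,H)>0$ for $\alpha=(k-2-t(k))^{-1}$, i.e.\ $(G,H)$ is $\alpha$-safe.

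Most of this simply duplicates Lemma~\ref{lower_bound_lem_1}, so the parts requiring care are the graph-specific bookkeeping: in item (c), the observation that a $k$-th layer vertex is ``heavy'' only when a specific $(k-1)$-st layer vertex is retained, and the resulting parent-counting injection bounding the number of heavy vertices by $|\mathcal{A}|$; and the one genuinely new feature relative to Lemma~\ref{lower_bound_lem_1} (where $|V(H)|$ was smaller by one), namely that when $c=0$ one can no longer assert every vertex brings at most $k-3$ edges, which forces the split of the $c=0$ analysis according to whether $\gamma=0$. I expect the only real, though routine, labour to be verifying the per-vertex edge counts in (a)--(d) case by case for $c\in\{0,1\}$; everything after that is the same algebra as in Lemma~\ref{lower_bound_lem_1}.
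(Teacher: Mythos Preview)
Your proposal is correct and follows the same layering and edge-counting scheme as the paper, with identical groupings $\mathcal{A},\mathcal{A}_0,\mathcal{B}$ and parameters $\gamma,\gamma_0,\beta,\delta,c$, and the $c=1$ case is literally the paper's argument. Your $c=0$ analysis is in fact more careful than the paper's: the paper asserts that when $c_{k-2}\notin S$ every vertex brings at most $k-3$ edges and hence $e(S,H)/v(S,H)\le k-3$, but since $|V(H)|=k-3$ here (one larger than in Lemma~\ref{lower_bound_lem_1}) this fails---for instance $c_k$ brings $k-2$ edges when $c_{k-1}\in S$---and your split into $\gamma\ge1$ versus $\gamma=0$ is exactly the repair needed.
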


\begin{proof}
As in Lemma~\ref{lower_bound_lem_1}, there are three layers in which the vertices in a graph $S$ with $H \subset S \subseteq G$ can be added: the $(k-2)$-nd layer comprising only $c_{k-2}$, the $(k-1)$-st comprising $c_{k-1}$, $c_{k-1,j}$ for $j \in [k-2]$, and the $k$-th layer the rest; we also have the same ordering of vertices in any $S$ with $H \subset S \subseteq G$ and the same meaning of ``bringing edges". Suppose 
\begin{itemize}
\item $\mathcal{A}_{0} = S \cap \left\{c_{k}, c_{k}^{k-1,j}: j \in [k-2]\right\}$ with $\gamma_{0} = \left|\mathcal{A}_{0}\right|$, and each vertex in this set brings at most $k-1$ edges;
\item $\mathcal{A} = S \cap \left\{c_{k-1}, c_{k-1,j}: j \in [k-2]\right\}$ with $\gamma = |\mathcal{A}|$, and at most one vertex in this set brings at most $k-2$ edges, and each of the others at most $k-3$ edges;
\item $\beta = \left|S \cap \left\{c_{k}^{i}, c_{k,i}^{k-1,j}: i, j \in [k-2]\right\}\right|$, and each vertex in this set brings at most $k-2$ edges;
\item $c = |S \cap \{c_{k-2}\}|$, and if this set is non-empty, it brings at most $k-4$ edges.
\end{itemize}
At most $\gamma$ vertices in $\mathcal{A}_{0}$ bring $k-1$ edges each, and the remaining bring at most $k-2$ edges each. Let $\delta = \gamma_{0} - \gamma$ if $\gamma_{0} \geq \gamma$, and $\delta = 0$ otherwise. If $c = 0$, each vertex in $S$ brings at most $k-3$ edges, hence \eqref{each_degree_leq_k-3} holds. If $c = 1$, the same analysis as the $c = 1$ case of Lemma~\ref{lower_bound_lem_1} applies.


\end{proof}

By Lemma~\ref{lower_bound_lem_3} and Theorem~\ref{safe_extension_copies}, the vertices $c_{k-2}, c_{k-1}, c_{k-1,j}, j \in [k-2], c_{k}, c_{k}^{i}, i \in [k-2], c_{k}^{k-1,j}, j \in [k-2], c_{k,i}^{k-1,j}, i, j \in [k-2]$ exist a.a.s.\ in $G_{2}$. Duplicator sets $y_{k-2} = c_{k-2}$. If Spoiler selects $x_{k-1}$ which is adjacent to $x_{1}, \ldots, x_{k-2}$, Duplicator sets $y_{k-1} = c_{k-1}$. If, after that, Spoiler selects $x_{k}$ that is adjacent to $x_{1}, \ldots, x_{k-1}$, Duplicator sets $y_{k} = c_{k}$. If Spoiler selects $x_{k}$ such that $x_{k}$ is adjacent to $x_{\ell}$ for all $\ell \in [k-1] \setminus \{i\}$ for some $i \in [k-2]$ and non-adjacent to $x_{i}$, Duplicator sets $y_{k} = c_{k}^{i}$. If Spoiler selects $x_{k}$ that is adjacent to $x_{1}, \ldots, x_{k-2}$ and non-adjacent to $x_{k-1}$, Duplicator sets $y_{k} = c_{k}^{k-1,1}$. By definition, $c_{k}^{k-1,1}$ is adjacent to $y_{1}, \ldots, y_{k-3}, y_{k-2} = c_{k-2}$ and $c_{k-1,1}$ and non-adjacent to $y_{k-1} = c_{k-1}$, hence serves as a winning response.

If Spoiler selects $x_{k-1}$ which is adjacent to $x_{\ell}$ for all $\ell \in [k-2] \setminus \{j\}$ and non-adjacent to $x_{j}$, for some $j \in [k-2]$, Duplicator sets $y_{k-1} = c_{k-1,j}$. If, after that, Spoiler selects $x_{k}$ which is adjacent to $x_{1}, \ldots, x_{k-1}$, Duplicator sets $y_{k} = c_{k}^{k-1,j}$. If Spoiler selects $x_{k}$ which is adjacent to $x_{\ell}$ for all $\ell \in [k-1] \setminus \{i\}$ and non-adjacent to $x_{i}$ for some $i \in [k-2]$, Duplicator sets $y_{k} = c_{k,i}^{k-1,j}$. If Spoiler selects $x_{k}$ which is adjacent to $x_{1}, \ldots, x_{k-2}$ and non-adjacent to $x_{k-1}$, Duplicator sets $y_{k} = c_{k}$. By definition, $c_{k}$ is adjacent to $y_{1}, \ldots, y_{k-3}, y_{k-2} = c_{k-2}$ and to $c_{k-1}$ but not to $c_{k-1,j}$, hence serves as a winning response. 

In both the cases above, if Spoiler chooses $x_{k}$ that is adjacent to at most $k-3$ out of $x_{1}, \ldots, x_{k-1}$, Duplicator wins by the same argument as outlined previously, using Theorem~\ref{safe_extension_copies}. Suppose Spoiler selects $x_{k-1}$ that is adjacent to at most $k-4$ out of $x_{1}, \ldots, x_{k-2}$. By Lemma~\ref{lower_bound_lem_2}, we conclude that there exist, a.a.s.\ in $G_{2}$, the following vertices, edges and non-edges:
\renewcommand{\theenumi}{(\roman{enumi})}
\begin{enumerate}
\item $d_{k-1}$ with $d_{k-1} \sim y_{i}$ iff $x_{k-1} \sim x_{i}$ for all $i \in [k-2]$ and $d_{k-1} \sim c_{k-1}$, but $d_{k-1} \nsim c_{k}$;
\item $d_{k}^{i}$ which is adjacent to $y_{\ell}$ for all $\ell \in [k-2] \setminus \{i\}$, adjacent to $d_{k-1}$, and non-adjacent to $y_{i}$, for each $i \in [k-2]$.
\end{enumerate}
Duplicator sets $y_{k-1} = d_{k-1}$. If Spoiler selects $x_{k}$ which is adjacent to all of $x_{1}, \ldots, x_{k-1}$, Duplicator sets $y_{k} = c_{k-1}$. If Spoiler selects $x_{k}$ which is adjacent to $x_{\ell}$ for all $\ell \in [k-1] \setminus \{i\}$ for some $i \in [k-2]$, and non-adjacent to $x_{i}$, Duplicator sets $y_{k} = d_{k}^{i}$. If Spoiler selects $x_{k}$ which is adjacent to $x_{1}, \ldots, x_{k-2}$ and not to $x_{k-1}$, Duplicator sets $y_{k} = c_{k}$. Finally, if Spoiler chooses $x_{k}$ that is adjacent to at most $k-3$ out of $x_{1}, \ldots, x_{k-1}$, Duplicator wins by the same argument as outlined previously, using Theorem~\ref{safe_extension_copies}.
 

\section{Existential first order sentences of quantifier depth $4$}\label{k=4}
In this section, we consider the class $\mathcal{E}_{4}$ of all existential first order sentences with quantifier depth at most $4$. From Theorem~\ref{bounded_qd}, we conclude that for all $0 < \alpha < \frac{1}{2}$, the random graph $G(n, n^{-\alpha})$ satisfies the zero-one law for $\mathcal{E}_{4}$. Our goal, therefore, is to find the minimum $\alpha_{4} \in \left[\frac{1}{2}, 1\right)$ such that the random graph $G(n, n^{-\alpha})$ fails to satisfy the zero-one law for $\mathcal{E}_{4}$. 

To this end, our purpose is to come up with an $\alpha_4 \in \left[\frac{1}{2},1\right)$, a finite graph $G_{0}$ and an $\mathcal{E}_4$-sentence $\varphi$ such that

\begin{itemize}
    \item for every $\alpha<\alpha_{4}$, $G(n,n^{-\alpha})$ obeys the zero-one law for $\mathcal{E}_4$;
    \item a.a.s., $\varphi$ is true on $G(n,n^{-\alpha_{4}})$ if and only if $G(n,n^{-\alpha_{4}})$ contains an induced subgraph isomorphic to $G_0$;
    \item $G_0$ is strictly balanced and its density equals $1/\alpha_4$.
\end{itemize}
Then, by Theorem~\ref{strictly_balanced_no._copies_thm}, we know that the limit of the probability that $G\left(n, n^{-\alpha_{4}}\right)$ contains a copy of $G_{0}$ is a positive real strictly less than $1$.  

We first state here the sentence $\varphi$. For any vertices $a_{1}, a_{2}, b_{1}, b_{2}, b_{3}$, define the sentence
\begin{multline}\label{atom_1_k=4}
\psi_{1}(a_{1}, a_{2}, b_{1}, b_{2}, b_{3}) = [b_{1} \sim a_{1}] \wedge [b_{1} \sim a_{2}] \wedge [b_{2} \sim a_{1}] \wedge [b_{2} \nsim a_{2}] \wedge [b_{3} \nsim a_{1}] \wedge [b_{3} \sim a_{2}],
\end{multline}
and for vertices $a_{1}, a_{2}, a_{3}, b_{1}, b_{2}, b_{3}, b_{4}$, define the sentence
\begin{multline}\label{atom_2_k=4}
\psi_{2}(a_{1}, a_{2}, a_{3}, b_{1}, b_{2}, b_{3}, b_{4}) = [b_{1} \sim a_{1}] \wedge [b_{1} \sim a_{2}] \wedge [b_{1} \sim a_{3}] \wedge [b_{2} \sim a_{1}] \wedge [b_{2} \sim a_{2}] \wedge [b_{2} \nsim a_{3}] \\ \wedge [b_{3} \sim a_{1}] \wedge [b_{3} \nsim a_{2}] \wedge [b_{3} \sim a_{3}] \wedge [b_{4} \nsim a_{1}] \wedge [b_{4} \sim a_{2}] \wedge [b_{4} \sim a_{3}].
\end{multline}
The sentence $\varphi$ is now stated as follows:

\begin{multline}\label{sentence_k_4}
\exists x \exists a \exists a_{1,1} \exists a_{1,0} \exists a_{0,1} \exists a_{1,1}^{1,1} \exists a'_{1,1} \exists a_{1,1}^{1,0} \exists a_{1,1}^{0,1} \exists a_{1,0}^{1,1} \exists a'_{1,0} \exists a_{1,0}^{1,0} \exists a_{1,0}^{0,1} \exists a_{0,1}^{1,1} \exists a'_{0,1} \exists a_{0,1}^{1,0} \exists a_{0,1}^{0,1} \\ \exists b \exists b_{1,1} \exists b_{1,0} \exists b_{0,1} \exists b_{1,1}^{1,1} \exists b_{1,1}^{1,0} \exists b'_{1,1} \exists b_{1,1}^{0,1} \exists b_{1,0}^{1,1} \exists b'_{1,0} \exists b_{1,0}^{1,0} \exists b_{1,0}^{0,1} \exists b_{0,1}^{1,1} \exists b'_{0,1} \exists b_{0,1}^{1,0} \exists b_{0,1}^{0,1} [x \sim a] \wedge \\ \left[\psi_{1}(x, a, a_{1,1}, a_{1,0}, a_{0,1})\right] \wedge \left[\psi_{2}\left(x, a, a_{1,1}, a_{1,1}^{1,1}, a'_{1,1}, a_{1,1}^{1,0}, a_{1,1}^{0,1}\right)\right] \wedge \left[\psi_{2}\left(x, a, a_{1,0}, a_{1,0}^{1,1}, a'_{1,0}, a_{1,0}^{1,0}, a_{1,0}^{0,1}\right)\right] \wedge \\ \left[\psi_{2}\left(x, a, a_{0,1}, a_{0,1}^{1,1}, a'_{0,1}, a_{0,1}^{1,0}, a_{0,1}^{0,1}\right)\right] \wedge [x \nsim b] \wedge \left[\psi_{1}(x, b, b_{1,1}, b_{1,0}, b_{0,1})\right] \wedge \\ \left[\psi_{2}\left(x, b, b_{1,1}, b_{1,1}^{1,1}, b'_{1,1}, b_{1,1}^{1,0}, b_{1,1}^{0,1}\right)\right] \wedge \left[\psi_{2}\left(x, b, b_{1,0}, b_{1,0}^{1,1}, b'_{1,0}, b_{1,0}^{1,0}, b_{1,0}^{0,1}\right)\right] \\ \wedge \left[\psi_{2}\left(x, b, b_{0,1}, b_{0,1}^{1,1}, b'_{0,1}, b_{0,1}^{1,0}, b_{0,1}^{0,1}\right)\right].
\end{multline}

We now show that the graph $G_{0}$, shown in Figure~\ref{fig_final_optimum}, is such that:
\renewcommand{\theenumi}{(\roman{enumi})}
\begin{enumerate}
\item \label{claim_1} the graph $G_{0}$ is strictly balanced;
\item \label{claim_2} if a graph $G$ contains an induced subgraph isomorphic to $G_{0}$, then $G \models \varphi$; if a graph $G \models \varphi$, then $G$ contains either an induced subgraph isomorphic to $G_{0}$ or it contains an induced subgraph with at most $43$ vertices whose maximal density is at least as large as $\maxden(G_{0}) = \rho(G_{0})$ (this identity follows from the observation that $G_{0}$ is strictly balanced).

\end{enumerate}
\begin{figure}[h!]
  \begin{center}
   \includegraphics[width=0.8\textwidth]{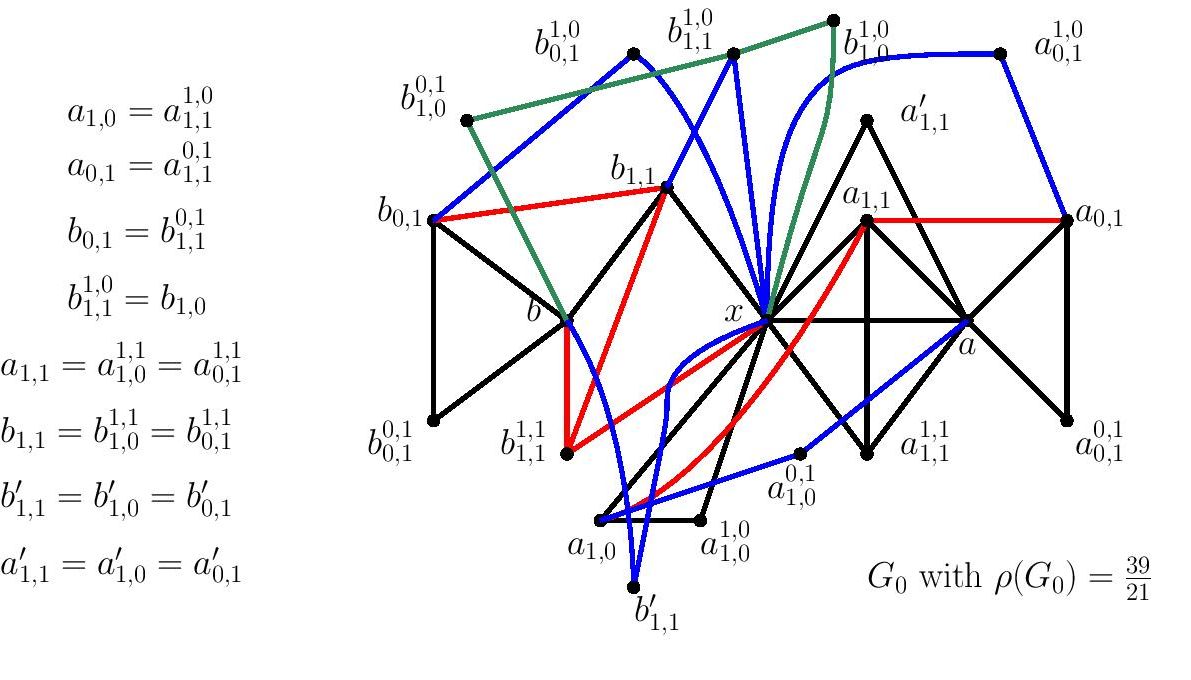}
\end{center}
  \caption{}
\label{fig_final_optimum}
\end{figure}

We first prove the claim in \ref{claim_1}, in the following lemma:
\begin{lemma}\label{strict_balanced_k=4}
The graph $G_{0}$ in Figure~\ref{fig_final_optimum} is strictly balanced.
\end{lemma}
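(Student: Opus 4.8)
The goal is to show that $G_{0}$ is strictly balanced, i.e.\ that $\rho(F) < \rho(G_{0})$ for every proper subgraph $F \subsetneq G_{0}$. First I would read off from Figure~\ref{fig_final_optimum} the exact vertex and edge counts: $G_{0}$ has a root vertex $x$, two ``second-root'' vertices $a$ (with $x \sim a$) and $b$ (with $x \nsim b$), and then, attached to each of $a$ and $b$, a symmetric gadget encoding the witnesses demanded by $\psi_{1}$ and the three copies of $\psi_{2}$ in the sentence $\varphi$. Counting from \eqref{sentence_k_4}, the $a$-side has $1 + 3 + 3\cdot 4 = 16$ non-root vertices and the $b$-side another $16$, so $v(G_{0}) = 1 + 16 + 16 = 33$; I would then tally $e(G_{0})$ from the adjacency conditions in $\psi_{1}$ and $\psi_{2}$ together with the edges $x\sim a$, and record $\rho_{0} := \rho(G_{0}) = e(G_{0})/33$. (I expect $\rho_{0}$ to come out to $13/7$, consistent with the claimed $\alpha_{4} = 7/13$.)

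\textbf{Key steps.} The natural way to verify strict balancedness of a graph built as an ``extension tower'' is a peeling/induction argument on how far a proper subgraph $F$ can reach into the gadget. I would organize the vertices of $G_{0}$ by layers — layer $0$: $\{x\}$; layer $1$: $\{a,b\}$; layer $2$: the ground-type witnesses $a_{1,1},a_{1,0},a_{0,1}$ and their $b$-analogues; layer $3$: the remaining vertices $a_{i,j}^{\cdot,\cdot}, a'_{i,j}$ and their $b$-analogues — and observe that each vertex in layer $\ell+1$ is joined to a prescribed set of vertices, all in layers $\le \ell+1$, and in particular each leaf-type vertex in the outermost layer has degree either $k-1=3$ or $k-2=2$ in $G_{0}$ (writing the $k=4$ specializations out explicitly). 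The core computation is: for a proper subgraph $F$, process its vertices in decreasing layer order; each removed vertex deletes at least as many edges, relative to its contribution to $v$, as the global density $\rho_{0}$ would predict, because every non-root vertex has degree $\ge 2$ and the ``cheapest'' vertices are balanced against the denser core $x,a,b$ plus their ground witnesses. Concretely, I would show $f_{1/\rho_{0}}(G_{0}, F) := v(G_{0},F) - \rho_{0}^{-1} e(G_{0},F) < 0$ is impossible — equivalently that removing any nonempty set of vertices strictly decreases the density — by checking that (i) removing any single outermost-layer vertex strictly drops the density, and (ii) the induced subgraph on the ``spine'' $\{x,a,b\}$ together with the layer-2 witnesses already has density $> \rho_{0}$, so no subgraph obtained by further deletions can reach $\rho_{0}$. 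A short case analysis on which of $x$, $a$, $b$ survive in $F$ (the root $x$ is the unique degree-heavy vertex linking the two gadgets, so losing it disconnects and lowers density) completes the argument.

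\textbf{Main obstacle.} The delicate point is not any single inequality but making the peeling argument airtight in the presence of \emph{shared} low-degree vertices: a priori a proper subgraph could keep many degree-$2$ leaf vertices while discarding some interior vertex, and one must rule out that this combination ever pushes the ratio up to $\rho_{0}$. I would handle this by a global counting identity rather than vertex-by-vertex greediness: partition $V(G_{0})\setminus V(F)$ into the layer-$2$/layer-$3$ vertices it contains and bound $e(G_{0},F)$ above using that each such removed vertex has degree at most $3$ in $G_{0}$ while each removed vertex \emph{forces} the removal of its edge to the (surviving or removed) spine, then compare with $\rho_{0}\,v(G_{0},F)$; the arithmetic reduces to checking a single linear inequality in the counts of removed vertices of each type, which holds because $\rho_{0} = 13/7 < 2$. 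The remaining routine part — writing the explicit edge count of $G_{0}$ and confirming $\rho_{0}=13/7$ — I would relegate to a direct inspection of Figure~\ref{fig_final_optimum}.
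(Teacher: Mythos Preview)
Your proposal rests on a misidentification of $G_{0}$. You read $G_{0}$ off from the sentence $\varphi$ and obtain $33$ vertices, but $G_{0}$ is \emph{not} the generic graph in which all $33$ quantified variables are distinct; it is the specific optimized graph of Figure~\ref{fig_final_optimum}, in which many of those variables are identified. From the figure (and the paper's own edge--vertex accounting) one has $v(G_{0})=21$ and $e(G_{0})=39$, giving $\rho(G_{0})=\tfrac{39}{21}=\tfrac{13}{7}$. The $a$-side of $G_{0}$ consists only of $x,a,a_{1,1},a'_{1,1},a_{1,0},a_{0,1},a_{1,0}^{1,0},a_{0,1}^{0,1},a_{1,0}^{0,1},a_{0,1}^{1,0},a_{1,1}^{1,1}$, and the $b$-side of $x,b,b_{1,1},b_{1,1}^{1,1},b_{1,0},b_{1,0}^{1,0},b_{0,1},b'_{1,1},b_{1,0}^{0,1},b_{0,1}^{1,0},b_{0,1}^{0,1}$. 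So your layer decomposition, vertex counts, and all subsequent inequalities are being carried out for the wrong graph.

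There is also a logical slip in your key step (ii): you write that ``the induced subgraph on the spine $\{x,a,b\}$ together with the layer-$2$ witnesses already has density $>\rho_{0}$, so no subgraph obtained by further deletions can reach $\rho_{0}$.'' If some proper subgraph had density exceeding $\rho_{0}$, then $G_{0}$ would \emph{fail} to be strictly balanced; strict balancedness requires every proper subgraph to have density strictly \emph{below} $\rho(G_{0})$. What one actually needs is the opposite monotonicity: the density should strictly increase as one builds $G_{0}$ up layer by layer. The paper achieves this by ordering the vertices of each side into steps so that the first non-root vertex brings $1$ edge, the final vertex $a_{1,1}^{1,1}$ brings $3$ edges, and every other vertex brings exactly $2$ edges; one then checks that $\frac{1+2y+3z}{3+y+z}$ is strictly increasing in both $y$ and $z$, so the maximum is attained only at the full graph. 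Your peeling heuristic could be repaired into this argument, but only after correcting both the graph and the direction of the density comparison.
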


\begin{proof}
Observe that $G_{0}$ can be split into two edge-disjoint parts having only $x$ in common, illustrated in Figure~\ref{parts}.  
\begin{figure}[h!]
  \begin{center}
   \includegraphics[width=0.8\textwidth]{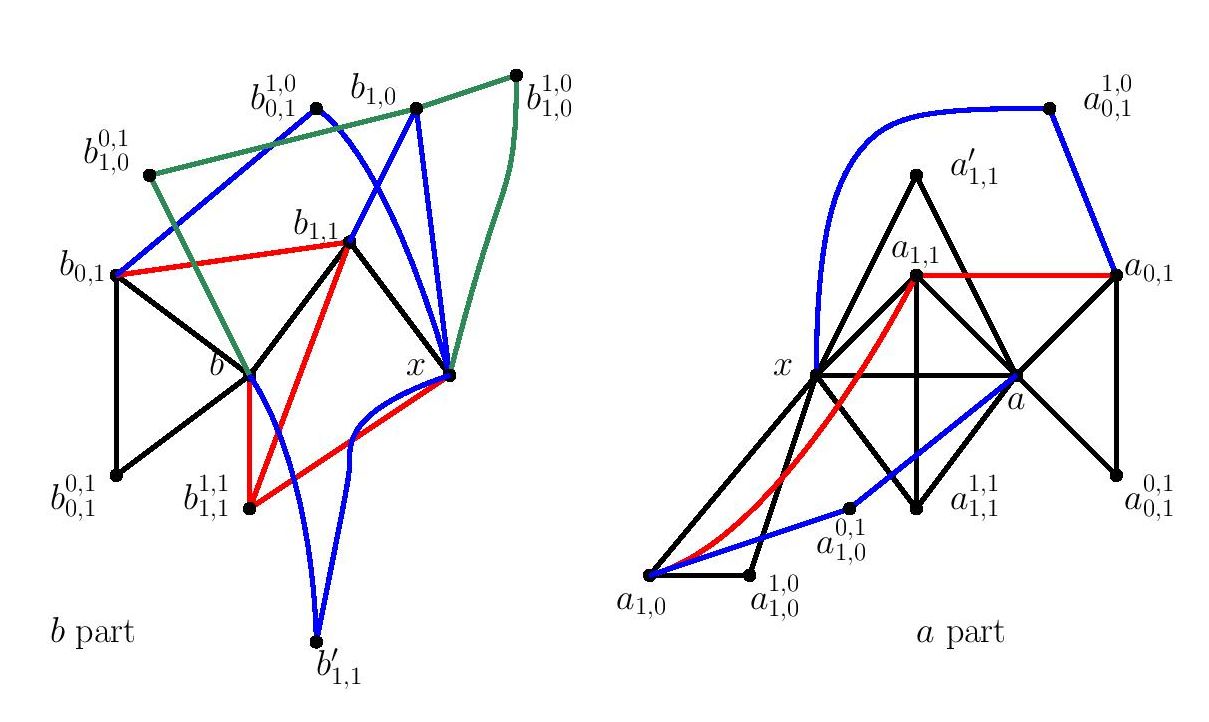}
\end{center}
  \caption{}
\label{parts}
\end{figure}
The vertices in the $a$ part can be added in the following sequence:
\renewcommand{\theenumi}{(\alph{enumi})}
\begin{enumerate}
\item in step $0$, the vertex $x$,
\item in step $1$, the vertex $a$, bringing $1$ edge,
\item in step $2$, the vertices $a'_{1,1}$ and $a_{1,1}$, each bringing precisely $2$ edges,
\item in step $3$, the vertices $a_{1,0}$ and $a_{0,1}$, each bringing precisely $2$ edges,
\item in step $4$, the vertices $a_{1,0}^{1,0}$, $a_{0,1}^{0,1}$, $a_{1,0}^{0,1}$ and $a_{0,1}^{1,0}$, each bringing precisely $2$ edges,
\item in step $5$, the vertex $a_{1,1}^{1,1}$, bringing $3$ edges.
\end{enumerate}
The vertices in the $b$ part can be added in the following sequence:
\renewcommand{\theenumi}{(\alph{enumi})}
\begin{enumerate}
\item in step $0$, the vertex $x$,
\item in step $1$, the vertex $b_{1,1}$, bringing $1$ edge,
\item in step $2$, the vertices $b_{1,1}^{1,1}$ and $b_{1,0}$, each bringing $2$ edges,
\item in step $3$, the vertices $b$ and $b_{1,0}^{1,0}$, each bringing $2$ edges,
\item in step $4$, the vertices $b_{0,1}$, $b'_{1,1}$, $b_{1,0}^{0,1}$, each bringing $2$ edges,
\item in step $5$, the vertices $b_{0,1}^{1,0}$ and $b_{0,1}^{0,1}$, each bringing $2$ edges.
\end{enumerate}

If we consider a subgraph $H$ of $G_{0}$ containing $x$, $a$, $b_{1,1}$, and $y_{i}$ vertices from step $i$ of $a$ part and $z_{i}$ vertices from step $i$ of $b$ part for $i \in \{2, 3, 4, 5\}$, with $y_{2}, y_{3}, z_{2}, z_{3}, z_{5} \in \{0, 1, 2\}$, $y_{4}, z_{4} \in \{0, 1, 2, 3, 4\}$ and $y_{5} \in \{0, 1\}$, the number of edges in $H$ is at most $1 + 2 \sum_{i=2}^{4} (y_{i}+z_{i}) + 3y_{5} + 2z_{5},$ whereas the number of vertices is $3 + \sum_{i=2}^{5}(y_{i}+z_{i})$. Letting $y = \sum_{i=2}^{4} (y_{i}+z_{i}) + z_{5}$ and $z = y_{5}$, $\rho(H)$ is at most 
\begin{equation}
\frac{1 + 2y + 3z}{3 + y + z}.
\end{equation}
We note here that
\begin{align}
\frac{3 + 2y + 3z}{4 + y + z} - \frac{1 + 2y + 3z}{3 + y + z} &= \frac{5-z}{(4+y+z)(3+y+z)}, \nonumber
\end{align}
which is positive since $z \leq 1$. Thus the density increases with $y$. Similarly, the density increases with $z$. Thus $\rho(H) < \rho(G_{0})$ whenever $H$ is a proper subgraph of $G_{0}$ containing $x$, $a$, $b_{1,1}$. When $H$ excludes at least one of $x$, $a$ and $b_{1,1}$, its density is even lower, since in both $a$ and $b$ parts, the vertex added in step $2$ brings $1$ edge, and subsequent vertices bring at most $2$ edges each, except possibly for $a_{1,1}^{1,1}$ if it is included in $H$.

\end{proof}

The first part of \ref{claim_2} is trivial. The appropriate values of $v(G_{0})$, $e(G_{0})$ and $\rho(G_{0})$ are given in Figure~\ref{fig_final_optimum}. We now prove the second part of \ref{claim_2}. For $\varphi$ to be true on a graph $G$, the bare minimum we need are the following vertices, edges and non-edges in $G$: 
\begin{enumerate}
\item $x$, $a$, $b$ where $x \sim a$ and $b \nsim x$;
\item $a_{1,1}$ with $a_{1,1} \sim x$, $a_{1,1} \sim a$;
\item $a_{1,0}$ with $a_{1,0} \sim x$ and $a_{1,0} \nsim a$;
\item $a_{1,1}^{1,1}$ with $a_{1,1}^{1,1} \sim x$, $a_{1,1}^{1,1} \sim a$ and $a_{1,1}^{1,1} \sim a_{1,1}$;
\item $a'_{1,1}$ with $a'_{1,1} \sim x$, $a'_{1,1} \sim a$, $a'_{1,1} \nsim a_{1,1}$;
\item $a_{1,0}^{1,0}$ with $a_{1,0}^{1,0} \sim x$, $a_{1,0}^{1,0} \sim a_{1,0}$ and $a_{1,0}^{1,0} \nsim a$;
\item $b_{0,1}$ with $b_{0,1} \sim b$, $b_{0,1} \nsim x$;
\item $b_{0,1}^{0,1}$ with $b_{0,1}^{0,1} \sim b$, $b_{0,1}^{0,1} \sim b_{0,1}$ and $b_{0,1}^{0,1} \nsim x$.
\end{enumerate}
Let us call this subgraph $H$ with $v(H) = 10$ and $e(H) = 14$ (see Figure~\ref{fig_1}). 
\begin{figure}[h!]
  \begin{center}
   \includegraphics[width=0.5\textwidth]{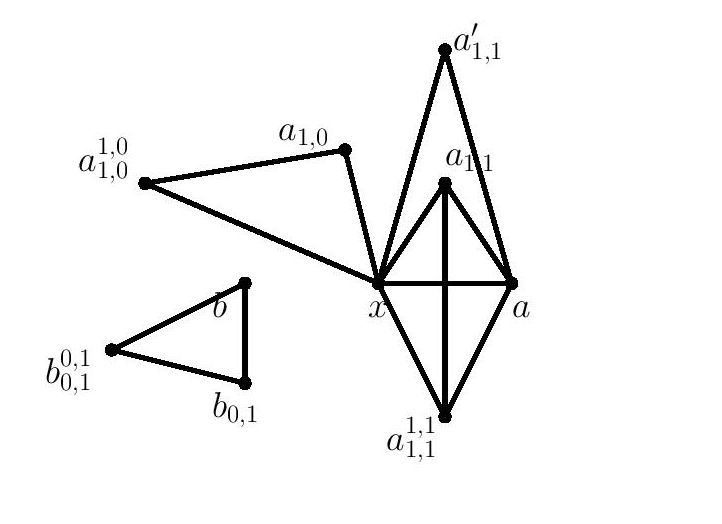}
\end{center}
  \caption{}
\label{fig_1}
\end{figure}
The set of variables left to evaluate is
\begin{multline}\label{total_remaining}
S_{0} = \Big\{b_{1,1}, a_{0,1}, a_{0,1}^{0,1}, a_{1,1}^{1,0}, a_{1,1}^{0,1}, a_{1,0}^{1,1}, a'_{1,0}, a_{1,0}^{0,1}, a_{0,1}^{1,1}, a'_{0,1}, a_{0,1}^{1,0}, b_{1,1}^{1,1}, b'_{1,1}, b_{1,1}^{1,0}, b_{1,1}^{0,1}, b_{1,0}, b_{1,0}^{1,1}, b'_{1,0}, \\ b_{1,0}^{1,0}, b_{1,0}^{0,1}, b_{0,1}^{1,1}, b'_{0,1}, b_{0,1}^{1,0}\Big\}.
\end{multline}
In what follows, we consider a family $\Sigma$ of finite graphs $\Gamma$ with at most $43$ vertices, adding vertices evaluating the elements of $S_{0}$ to $H$, such that if $G \models \varphi$, it contains an induced subgraph isomorphic to some $\Gamma \in \Sigma$. We show that $\maxden(\Gamma) > \rho(G_{0})$ for each $\Gamma \in \Sigma \setminus G_{0}$ by considering various scenarios, the most difficult of which is discussed in Subsection~\ref{subcase_1} below, and the rest in the Appendix (Section~\ref{Appendix}). The notation introduced below, unless otherwise stated, will be used in the Appendix.

\subsection{The most difficult scenario:}\label{subcase_1}
Here we assume that $a_{0,1}$ and $a_{0,1}^{0,1}$ are added as new vertices to $H$, thus yielding the graph $H_{0}$ as in Figure~\ref{H_{0}_subcase_1}, with $17$ edges and $12$ vertices.
\begin{figure}[h!]
  \begin{center}
   \includegraphics[width=0.5\textwidth]{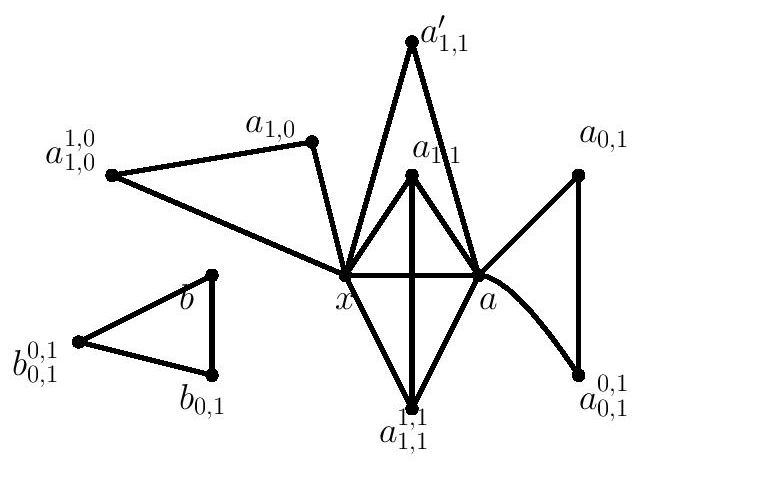}
\end{center}
  \caption{}
\label{H_{0}_subcase_1}
\end{figure}
Either $b_{1,1}$ is a new vertex added to $H_{0}$, in which case we add edges $\{b_{1,1}, x\}$ and $\{b_{1,1}, b\}$, or it belongs to
\begin{equation}\label{S_{1}}
S_{1} = \left\{a, a_{1,0}, a_{1,1}, a_{1,1}^{1,1}, a'_{1,1}, a_{1,0}^{1,0}\right\},
\end{equation}
in which case we add $\{b_{1,1}, b\}$. We denote by $H'_{0}$ the graph obtained after accounting for $b_{1,1}$. We define the vertices: 
\renewcommand{\theenumi}{(\roman{enumi})}
\begin{enumerate}
\item $v_{1}$ is adjacent to all of $x$, $b$ and $b_{0,1}$,
\item $v_{2}$ is adjacent to all of $x$, $b$ and $b_{1,1}$,
\item $v_{3}$ is adjacent to all of $x$, $a$ and $a_{1,0}$,
\item $v_{4}$ is adjacent to all of $x$, $a$ and $a_{0,1}$,
\end{enumerate}
and we call them the \emph{first-level vertices}. We call a first-level vertex \emph{old} if it coincides with a vertex in $H'_{0}$, we call it \emph{unique} if it is added as a new vertex to $H'_{0}$ and does not coincide with any other first-level vertex, and \emph{banal} otherwise. The graph obtained after we have accounted for all first-level vertices is denoted $H_{1}$. We define the vertices:
\renewcommand{\theenumi}{(\roman{enumi})}
\begin{enumerate}
\item $u_{1}$ is adjacent to $x$ and $b_{0,1}$, non-adjacent to $b$,
\item $u_{2}$ is adjacent to $x$ and $b$, non-adjacent to $b_{1,1}$,
\item $u_{3}$ is adjacent to $b$ and $b_{1,1}$, non-adjacent to $x$,
\item $u_{4}$ is adjacent to $x$ and $b_{1,1}$, non-adjacent to $b$,
\item $u_{5}$ is adjacent to $a$ and $a_{1,0}$, non-adjacent to $x$,
\item $u_{6}$ is adjacent to $x$ and $a_{1,1}$, non-adjacent to $a$,
\item $u_{7}$ is adjacent to $a$ and $a_{1,1}$, non-adjacent to $x$,
\item $u_{8}$ is adjacent to $x$ and $a_{0,1}$, non-adjacent to $a$.
\end{enumerate}
We call these \emph{second-level vertices}. We call a second-level vertex \emph{old} if it coincides with a vertex in $H'_{0}$, we call it \emph{banal} if it coincides with a first-level vertex that is not old (in other words, coincides with a vertex in $V(H_{1}) \setminus V\left(H'_{0}\right)$), and we call it \emph{unique} otherwise. Note that two or more unique second-level vertices are allowed to coincide with each other.

\begin{remark}
Note that we have not yet considered a vertex evaluating $b_{1,0}$, its ``extensions" which are vertices evaluating $b_{1,0}^{1,1}$, $b'_{1,0}$, $b_{1,0}^{1,0}$, $b_{1,0}^{0,1}$, as well as $b'_{0,1}$, $a'_{0,1}$ and $a'_{1,0}$. As it happens, some of these end up coinciding with some of the vertices in Figure~\ref{fig_final_optimum}.
\end{remark}


If $v_{3}$ is old, it either coincides with a vertex in 
\begin{equation}\label{S_{2}}
S_{2} = \left\{a_{1,1}, a_{1,1}^{1,1}, a'_{1,1}\right\},
\end{equation}
in which case we add $\{v_{3}, a_{1,0}\}$, or $v_{3} = b_{1,1}$ when $b_{1,1} \in V\left(H'_{0}\right) \setminus V(H_{0})$, in which case we add $\{b_{1,1}, a\}$ and $\{b_{1,1}, a_{1,0}\}$. When $v_{4}$ is old, either $v_{4} \in S_{2}$, in which case we add $\{v_{4}, a_{0,1}\}$, or with $b_{1,1}$ when $b_{1,1} \in V\left(H'_{0}\right) \setminus V(H_{0})$, in which case we add $\{b_{1,1}, a\}$ and $\{b_{1,1}, a_{0,1}\}$. When $v_{1}$ is old and coincides with $b_{1,1}$, we add $\{b_{1,1}, b_{0,1}\}$; if $v_{1} \in S_{1} \setminus \{b_{1,1}\}$, we add $\{v_{1}, b\}$ and $\{v_{1}, b_{0,1}\}$. When $v_{2}$ is old, the possibilities according to $b_{1,1}$ are:
\renewcommand{\theenumi}{(\alph{enumi})}
\begin{enumerate}
\item if $b_{1,1} \in V\left(H'_{0}\right) \setminus V(H_{0})$, and $v_{2} \in S_{1}$, we add $\{v_{2}, b\}$ and $\{v_{2}, b_{1,1}\}$;
\item if $b_{1,1} = a_{1,0}$, and $v_{2} = a_{1,0}^{1,0}$, we add $\{a_{1,0}^{1,0}, b\}$; if $v_{2} \neq a_{1,0}^{1,0}$, we add $\{v_{2}, a_{1,0}\}$ and $\{v_{2}, b\}$;
\item if $b_{1,1} = a_{1,0}^{1,0}$ and $v_{2} = a_{1,0}$, we add $\{a_{1,0}, b\}$; if $v_{2} \neq a_{1,0}$, we add $\{v_{2}, a_{1,0}^{1,0}\}$ and $\{v_{2}, b\}$;
\item if $b_{1,1} = a_{1,1}$ and $v_{2} \in \{a_{1,0}, a_{1,0}^{1,0}\}$, we add $\{v_{2}, b\}$ and $\{v_{2}, a_{1,1}\}$; if $v_{2} \in \{a, a_{1,1}^{1,1}\}$, we add $\{v_{2}, b\}$;
\item if $b_{1,1} = a_{1,1}^{1,1}$ and $v_{2} \in \{a_{1,0}, a'_{1,1}, a_{1,0}^{1,0}\}$, we add $\{v_{2}, b\}$ and $\{v_{2}, a_{1,1}^{1,1}\}$; if $v_{2} \in \{a, a_{1,1}\}$, we add $\{v_{2}, b\}$;
\item if $b_{1,1} = a'_{1,1}$ and $v_{2} \in \{a_{1,1}^{1,1}, a_{1,0}, a_{1,0}^{1,0}\}$, we add $\{v_{2}, b\}$ and $\{v_{2}, a'_{1,1}\}$; if $v_{2} = a$, we add $\{v_{2}, b\}$;
\item if $b_{1,1} = a$, then $v_{2} \in S_{2}$, and we add $\{v_{2}, b\}$.

\end{enumerate}


\begin{lemma}\label{old_first_level_subcase_1}
Suppose $\nu_{1}$ is the number of new, distinct first-level vertices (including both unique and banal ones) that are added to $H'_{0}$. Then the number of edges that we need to add on account of all first-level vertices is at least $2\nu_{1}+5$ when $b_{1,1} \in V\left(H'_{0}\right) \setminus V(H_{0})$ and $v_{2}$ is old; in all other cases, this number is at least $2\nu_{1}+4$.
\end{lemma}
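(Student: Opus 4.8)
The plan is to split the new edges into two families --- those incident to a vertex of $V(H_1)\setminus V(H_0')$ and those joining two vertices of $V(H_0')$ --- and to bound each separately. Write $\nu_1=|V(H_1)\setminus V(H_0')|$ as in the statement, let $r\in\{0,1,2,3,4\}$ be the number of the four roles $v_1,v_2,v_3,v_4$ that are old, and note the elementary fact that every designated neighbour of every role --- the vertex $x$, one of the two hubs $a,b$, and one of the four ground vertices $b_{0,1},b_{1,1},a_{1,0},a_{0,1}$ --- already belongs to $V(H_0')$; consequently every edge forced by an old role joins two vertices of $V(H_0')$, so the two families above are disjoint.

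For the first family I would show that each new distinct first-level vertex $w$, playing $m(w)$ of the roles, is joined to at least $m(w)+2$ vertices of $V(H_0')$. Indeed $w$ is joined to $x$, to $b$ if it plays a role among $\{v_1,v_2\}$, to $a$ if it plays a role among $\{v_3,v_4\}$, and to the ground vertex of each role it plays; after identifying the common vertex $x$ and the hub shared by $v_1,v_2$ and the hub shared by $v_3,v_4$, this leaves at least $1+1+(m(w)-1)=m(w)+1$ neighbours if all of $w$'s roles lie on one ``side'', and one more if $w$ meets both hubs. The only identifications that could merge these neighbours further are the ones permitted for $b_{1,1}$ (the subcases $b_{1,1}\in\{a,a_{1,0}\}$ of $b_{1,1}\in S_1$), and a short check confirms that the bound $m(w)+2$ persists in each. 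Summing over the $\nu_1$ new vertices and using $\sum_w m(w)=4-r$, the first family has at least $(4-r)+2\nu_1$ edges.

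For the second family I would show that the old roles force at least $r$ new edges inside $V(H_0')$ in general, and at least $r+1$ in the special case $b_{1,1}\in V(H_0')\setminus V(H_0)$ with $v_2$ old. The casework preceding the statement already supplies, for each old role, at least one edge that must be added; the task is to verify, running through the subcases (a)--(g) for $v_2$ and the analogous lists for $v_1,v_3,v_4$, that each such edge is absent from $H_0'$ and that one can choose one edge per old role so that the choices are pairwise distinct, even when the natural choices collide. The worst collision is $b_{1,1}=a_{1,1}$ with $v_1=v_3=v_4=a_{1,1}$ and $v_2=a_{1,0}$: there $\{a_{1,0},a_{1,1}\}$ is forced by both $v_2$ and $v_3$, but $v_2$ additionally forces $\{a_{1,0},b\}$, so four distinct new edges survive. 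In the special case $v_2\in S_1$, and since no vertex of $S_1$ is adjacent in $H_0'$ to $b$ or to the freshly inserted vertex $b_{1,1}$, subcase (a) forces the two distinct new edges $\{v_2,b\}$ and $\{v_2,b_{1,1}\}$; this yields the extra $+1$ (attributed to the pair $\{v_1,v_2\}$ if those two roles coincide). Adding the two families gives at least $(4-r)+2\nu_1+r=2\nu_1+4$ edges in general and at least $(4-r)+2\nu_1+(r+1)=2\nu_1+5$ in the special case.

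The first family is routine; the real obstacle is the bookkeeping in the second, where one must confirm, simultaneously over all identifications allowed among $b_{1,1},v_1,v_2,v_3,v_4$, that the edges forced by the old roles are genuinely new and can be chosen pairwise distinct, and that in the special case the extra edge is not being double-counted.
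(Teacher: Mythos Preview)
Your argument is essentially the same as the paper's: both count the edges contributed by the old roles (at least $r$, or $r+1$ in the special case) separately from those brought by the $\nu_1$ new vertices (at least $m(w)+2$ each, summing to $(4-r)+2\nu_1$), and then add. Your presentation is slightly more systematic in making the parameter $r$ and the two-family split explicit, whereas the paper sidesteps the collision you flag (e.g.\ $\{v_2,b_{1,1}\}=\{v_3,a_{1,0}\}$ when $b_{1,1}\in V(H_0)$) by always selecting $\{v_2,b\}$ rather than $\{v_2,b_{1,1}\}$ as the representative edge for an old $v_2$, which keeps the four chosen edges $\{v_1,b_{0,1}\},\{v_2,b\},\{v_3,a_{1,0}\},\{v_4,a_{0,1}\}$ automatically distinct.
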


\begin{proof}
When $b_{1,1} \in V\left(H'_{0}\right) \setminus V(H_{0})$ and both $v_{2}$ and $v_{3}$ are old, they bring edges $\{b_{1,1}, a\}$, $\{b_{1,1}, a_{1,0}\}$, $\{a_{1,0}, b\}$ when $v_{2} = a_{1,0}$ and $v_{3} = b_{1,1}$, or edges $\{v_{3}, a_{1,0}\}$, $\{v_{2}, b\}$, $\{v_{2}, b_{1,1}\}$ when $v_{3} \neq b_{1,1}$, and $4$ edges otherwise. If $v_{2}$ is old and $v_{3}$ is not, then $v_{2}$ brings edges $\{v_{2}, b\}$ and $\{v_{2}, b_{1,1}\}$. If $v_{1}$ is old, it brings $\{v_{1}, b_{0,1}\}$; if $v_{4}$ is old, it brings $\{v_{4}, a_{0,1}\}$. This shows that the number of edges added due to old first-level vertices is at least one more than the number of old first-level vertices when $v_{2}$ is old.

When $b_{1,1} \in V\left(H'_{0}\right) \setminus V(H_{0})$ but $v_{2}$ is not old, we get $\{v_{3}, a_{1,0}\}$ when $v_{3}$ is old, $\{v_{4}, a_{0,1}\}$ when $v_{4}$ is old, $\{v_{1}, b_{0,1}\}$ when $v_{1}$ is old; when $b_{1,1} \in V(H_{0})$, we additionally get $\{v_{2}, b\}$ when $v_{2}$ is old. Since $\{v_{1}, b_{0,1}\}$, $\{v_{2}, b\}$, $\{v_{3}, a_{1,0}\}$ and $\{v_{4}, a_{0,1}\}$ are all distinct, hence the number of edges added due to old first-level vertices bring is at least as large as the number of old first-level vertices here.  

Whether $b_{1,1} \in V\left(H'_{0}\right) \setminus V(H_{0})$ or not, any $2$ first-level vertices coinciding in a banal vertex bring at least $4$ edges, any $3$ coinciding in a banal vertex bring at least $5$ edges, and all $4$ coinciding in a banal vertex bring at least $6$ edges. This concludes the proof.

\end{proof}


\begin{lemma}\label{unique_banal_2nd_level}
If $s$ banal second-level vertices coincide with a first-level vertex $v_{i}$, except for when $u_{2}$ and $v_{1}$ coincide, at least $s$ additional edges are introduced due to these second-level vertices. If $s$ many unique second-level vertices coincide with each other, they together bring at least $s+1$ edges.
\end{lemma}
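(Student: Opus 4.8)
The plan is to establish the two assertions separately; both come down to a short combinatorial check on the prescribed neighbours and non-neighbours of $u_{1},\dots,u_{8}$. I first record, for each $u_{j}$, its two prescribed neighbours and its single prescribed non-neighbour, and note three things: the eight neighbour-pairs are pairwise distinct and all lie in $V\left(H'_{0}\right)$; two such vertices (or a $u_{j}$ and a first-level vertex) can coincide only if the prescribed non-neighbour of neither is a prescribed neighbour of the other; and a \emph{new} first-level vertex $v$ (unique or banal) has, inside $H_{1}$, neighbour set exactly the union of the defining neighbours of the first-level vertices that coincide in $v$, all of which lie in $V\left(H'_{0}\right)$.

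For the first assertion, note that $u_{3},u_{5},u_{7}$ are non-adjacent to $x$ whereas every first-level vertex is adjacent to $x$, so only $u_{1},u_{2},u_{4},u_{6},u_{8}$ can be banal; each of these is adjacent to $x$ and to one further vertex $w_{j}$, where $w_{1},w_{2},w_{4},w_{6},w_{8}$ are the pairwise distinct vertices $b_{0,1},b,b_{1,1},a_{1,1},a_{0,1}$. If $u_{j}$ coincides with the (new) first-level vertex $v$, then $\{v,x\}$ already belongs to $H_{1}$, so $u_{j}$ forces exactly one candidate new edge, $\{v,w_{j}\}$, which is genuinely new iff $v\nsim w_{j}$ in $H_{1}$. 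One then runs through the five cases. For $j\in\{1,4,6,8\}$ the prescribed non-neighbour of $u_{j}$ forbids the coincidence class of $v$ from containing any first-level vertex whose defining neighbours include $w_{j}$ (and for $j=6$ there is no such first-level vertex to begin with), so $v\nsim w_{j}$ and the edge is new. For $j=2$, the only first-level vertices adjacent to $w_{2}=b$ are $v_{1}$ and $v_{2}$; the constraint $u_{2}\nsim b_{1,1}$ rules $v_{2}$ out of the coincidence class of $v$, so $u_{2}$ fails to force a new edge exactly when $v_{1}$ lies in that class --- the stated exception. Since the $w_{j}$ are distinct, the new edges forced by the $s$ banal second-level vertices at $v$ are distinct, giving at least $s$ new edges outside the exceptional configuration (and at least $s-1$ inside it).

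For the second assertion, let $L$ be the graph on $\{x,a,b,b_{0,1},b_{1,1},a_{1,0},a_{1,1},a_{0,1}\}$ whose eight edges are the neighbour-pairs of $u_{1},\dots,u_{8}$. A set of $s$ edges of $L$ is incident to at least $s+1$ vertices iff it contains no cycle, so it suffices to show that every cycle of $L$ contains two $u_{j}$ that cannot coincide. Stripping the three leaves $b_{0,1},a_{1,0},a_{0,1}$, then the resulting leaf $a$, then $a_{1,1}$, reveals that $L$ has a unique cycle: the triangle on $\{x,b,b_{1,1}\}$ formed by $u_{2},u_{3},u_{4}$; and the pairs $\{u_{2},u_{3}\},\{u_{2},u_{4}\},\{u_{3},u_{4}\}$ are each incompatible, since $u_{3}\sim b_{1,1}$ and $u_{4}\sim b_{1,1}$ while $u_{2}\nsim b_{1,1}$, and $u_{4}\sim x$ while $u_{3}\nsim x$. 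Hence any family of pairwise-coinciding unique second-level vertices corresponds to a forest in $L$, so $s$ of them are jointly adjacent to at least $s+1$ vertices; as a unique second-level vertex coincides with no vertex of $H_{1}$ while these neighbours lie in $V\left(H'_{0}\right)\subseteq V(H_{1})$, all $s+1$ resulting edges are new.

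The main obstacle is entirely in the first assertion: one must be certain that the candidate edge $\{v,w_{j}\}$ is really absent from $H_{1}$ even when $v$ is a banal first-level vertex, and the justification is precisely the non-adjacency constraint attached to $u_{j}$, which excludes exactly the coincidence classes of $v$ that would already supply that edge --- and, for $u_{2}$, leaves the single possibility that $v_{1}$ lies in the class of $v$, which accounts for the exception. Once that is handled, the distinctness of the $w_{j}$ and the leaf-stripping computation on $L$ close both parts; everything else is routine verification.
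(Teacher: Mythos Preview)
Your proof is correct and, for the second assertion, takes a genuinely different route from the paper's. The paper argues by enumerating the inclusion-maximal compatible families of $u_j$'s --- namely subsets of $\{u_3,u_5,u_7\}$ (the $x$-nonadjacent ones) and then $\{u_1,u_4,u_6,u_8\}$ and $\{u_2,u_6,u_8\}$ among the $x$-adjacent ones --- and for each counts the distinct forced neighbours directly. Your leaf-stripping argument on the auxiliary graph $L$ instead shows that the only cycle in $L$ is the triangle $u_2u_3u_4$, no two of whose edges correspond to compatible $u_j$'s, so every compatible family is a forest in $L$ and therefore spans at least $s+1$ vertices. This buys uniformity (one argument covers all sizes $s$ simultaneously and makes clear \emph{why} the bound is exactly $s+1$) at the cost of introducing $L$; the paper's enumeration is more pedestrian but entirely self-contained. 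For the first assertion your argument and the paper's are essentially the same case check, yours packaged via the observation that the non-adjacency constraint of $u_j$ rules out precisely the coincidence classes already adjacent to $w_j$, which is exactly what the paper verifies $v_i$-by-$v_i$. One caveat both proofs share: the claim that the eight neighbour-pairs are pairwise distinct (equivalently, that $L$ is simple) tacitly assumes $b_{1,1}\ne a_{1,1}$; this holds automatically when $b_{1,1}$ is new, and in the sub-subcase $b_{1,1}\in V(H_0)$ the lemma is only invoked for the pair $\{u_1,u_8\}$, where no such collision can occur.
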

\begin{proof}
The only second-level vertices that can coincide with $v_{1}$ are $u_{2}$, $u_{6}$ and $u_{8}$: when $v_{1} = u_{2}$, we add no edge; when $v_{1} = u_{6}$, we add $\{v_{1}, a_{1,1}\}$; when $v_{1} = u_{8}$, we add $\{v_{1}, a_{0,1}\}$. The only second-level vertices that can coincide with $v_{2}$ are $u_{6}$ and $u_{8}$: when $v_{2} = u_{6}$, we add $\{v_{2}, a_{1,1}\}$; when $v_{2} = u_{8}$, we add $\{v_{2}, a_{0,1}\}$. The only second-level vertices that can coincide with $v_{i}$, for $i \in \{3, 4\}$, are $u_{1}$, $u_{2}$ and $u_{4}$: when $v_{i} = u_{1}$, we add $\{v_{i}, b_{0,1}\}$; when $v_{i} = u_{2}$, we add $\{v_{i}, b\}$; when $v_{i} = u_{4}$, we add $\{v_{i}, b_{1,1}\}$. This proves the first part of Lemma~\ref{unique_banal_2nd_level}.

Some or all of $u_{3}$, $u_{5}$ and $u_{7}$ may coincide (and none of them may coincide with any other second-level vertex) -- $u_{3}$ brings $\{u_{3}, b\}$ and $\{u_{3}, b_{1,1}\}$, $u_{5}$ brings $\{u_{5}, a\}$ and $\{u_{5}, a_{1,0}\}$ and $u_{7}$ brings $\{u_{7}, a\}$ and $\{u_{7}, a_{1,1}\}$. Of these, only $\{u_{5}, a\}$ and $\{u_{7}, a\}$ coincide if $u_{5} = u_{7}$. Among the remaining vertices $u_{1}$, $u_{2}$, $u_{4}$, $u_{6}$ and $u_{8}$, there are two inclusion-maximum sets $\left\{u_{1}, u_{4}, u_{6}, u_{8}\right\}$ and $\left\{u_{2}, u_{6}, u_{8}\right\}$ that may coincide. In the former case, $u_{6}$ brings $\{u_{6}, a_{1,1}\}$, $u_{8}$ brings $\{u_{8}, a_{0,1}\}$, $u_{1}$ brings $\{u_{1}, b_{0,1}\}$ and $u_{4}$ brings $\{u_{4}, b_{1,1}\}$, which are all distinct; additionally, the common vertex is adjacent to $x$. In the latter case, $u_{6}$ brings $\{u_{6}, a_{1,1}\}$, $u_{8}$ brings $\{u_{8}, a_{0,1}\}$ and $u_{2}$ brings $\{u_{2}, b\}$, which are all distinct; additionally, the common vertex is adjacent to $x$.
\end{proof}

Let $\nu_{2}$ denote the number of unique second-level vertices we add to the graph $H_{1}$ and $e$ the number of edges that are added to $H_{1}$ due to non-unique second-level vertices. Let the graph obtained after accounting for all second-level vertices be denoted $H_{2}$. 

\subsubsection{First subcase:} We consider $b_{1,1} \in V\left(H'_{0}\right) \setminus V(H_{0})$ and $v_{2}$ not old. If the first-level vertices bring precisely $2\nu_{1}+4$ edges together, and each distinct, unique second-level vertex brings precisely $2$ edges, then $\rho(H_{2})$ is at least
\begin{equation}\label{subcase_1_b_{1,1}_new}
\frac{17 + 2 + (2 \nu_{1}+4) + 2\nu_{2} + e}{12 + 1 + \nu_{1} + \nu_{2}} = \frac{23 + 2(\nu_{1} + \nu_{2}) + e}{13 + (\nu_{1} + \nu_{2})}.
\end{equation}
For this to be strictly less than $\frac{13}{7}$, we require $(\nu_{1} + \nu_{2}) + 7e < 8$. Thus, either $e = 1$ and $\nu_{1} + \nu_{2} = 0$, which is not possible since $\nu_{1} + \nu_{2} = 0$ implies $\nu_{1} = 0$, which implies that $v_{2}$ is old, contradicting our hypothesis, or else $e = 0$ and $\nu_{1} + \nu_{2} \leq 7$. In what follows, no two unique second-level vertices are allowed to coincide, nor any banal second-level vertex allowed to exist except when $v_{1}$ is not old and $v_{1} = u_{2}$, because of Lemma~\ref{unique_banal_2nd_level}. We now split the analysis into a few cases depending on the value of $\nu_{1}$, keeping in mind that $v_{2}$ is not old: 
\renewcommand{\theenumi}{(\roman{enumi})}
\begin{enumerate}
\item \label{nu_{1}=1} When $\nu_{1} = 1$, so that $\nu_{2} \leq 6$: First, suppose all of $v_{1}, v_{3}, v_{4}$ are old -- they together bring precisely $3$ edges only when $v_{3}, v_{4} \in S_{2}$ and $v_{1} = b_{1,1}$, and these edges are $\{v_{3}, a_{1,0}\}$, $\{v_{4}, a_{0,1}\}$, and $\{v_{1}, b_{0,1}\} = \{b_{1,1}, b_{0,1}\}$; the edge incident on $b_{1,1}$ and added due to $v_{2}$ is $\{v_{2}, b_{1,1}\}$. In $H_{1}$, the only neighbours of $a_{1,0}$ are $x$, $a_{1,0}^{1,0}$ and $v_{3}$, so that there exists no vertex that can play the role of $u_{5}$; the only neighbours of $b_{0,1}$ are $b$, $b_{0,1}^{0,1}$ and $v_{1}$, so that no vertex plays the role of $u_{1}$; the only neighbours of $a_{0,1}$ are $a$, $a_{0,1}^{0,1}$ and $v_{4}$, so that no vertex plays the role of $u_{8}$; the only neighbours of $b_{1,1}$ are $x$, $b$ and $v_{2}$, so that no vertex plays the role of $u_{4}$; the only common neighbours of $x$ and $b$ are $b_{1,1}$ and $v_{2}$, so that no vertex plays the role of $u_{2}$. Thus $u_{1}$, $u_{2}$, $u_{4}$, $u_{5}$, $u_{8}$ need to be added as distinct, unique second-level vertices. 

When not all of $v_{1}, v_{3}, v_{4}$ are old, in addition to $u_{1}$, $u_{2}$, $u_{4}$, $u_{5}$, $u_{8}$, we need to add at least one of $u_{3}$, $u_{6}$, $u_{7}$ as a unique, distinct second-level vertices (note that if $v_{1}$ is not old, it coincides with $v_{2}$ since $\nu_{1} = 1$, hence $u_{2}$ cannot coincide with $v_{1}$). 

When $v_{1}, v_{3}, v_{4}$ are old, we have room to add one new vertex and $2$ new edges to $H_{2}$; when $v_{1}, v_{3}, v_{4}$ are not all old, we can neither add a new vertex nor a new edge to $H_{2}$. We now consider all the possibilities for $b_{1,0}$:
\begin{itemize}
\item If $b_{1,0}$ coincides with $a_{1,0}$, since the only neighbours of $a_{1,0}$ are $x$, $a_{1,0}^{1,0}$ and $v_{3}$, hence no vertex in $H_{2}$ can play the role of $b_{1,0}^{1,1}$ or $b_{1,0}^{0,1}$ without adding further edges. We thus have to add both of these as new vertices.
\item If $b_{1,0}$ coincides with $u_{1}$, since the only neighbours of $u_{1}$ are $x$ and $b_{0,1}$, hence no vertex in $H_{2}$ can play the role of $b_{1,0}^{1,1}$ or $b_{1,0}^{1,0}$ without adding further edges. We thus have to add both of these as new vertices.
\item If $b_{1,0}$ coincides with $u_{4}$, since the only neighbours of $u_{4}$ are $b_{1,1}$ and $x$, hence no vertex in $H_{2}$ can play the role of $b_{1,0}^{1,0}$ or $b_{1,0}^{0,1}$ without adding further edges. We thus have to add both of these as new vertices.
\item If we add $b_{1,0}$ as a new vertex, it brings precisely $1$ edge, and we are allowed to add one more edge but no new vertices. This is not enough to account for all of $b_{1,0}^{1,1}$, $b_{1,0}^{1,0}$ and $b_{1,0}^{0,1}$. 
\end{itemize}
Thus it never suffices to add a single new vertex and $2$ new edges to $H_{2}$, thus violating the condition $\nu_{2} \leq 6$.

\item When $\nu_{1} = 2$: In this case, we need $\nu_{2} \leq 5$. If $v_{3}$ and $v_{4}$ are old, the only neighbours to $b_{1,1}$ in $H_{1}$ are $x$, $b$ and $v_{2}$, so that no vertex in $H_{1}$ plays the role of $u_{3}$ nor of $u_{4}$. If $v_{3}$ and $v_{1}$ are old, the only neighbours of $a_{1,1}$ are $x$, $a$, $a_{1,1}^{1,1}$ and possibly $a_{1,0}$ if $v_{3} = a_{1,1}$, so that no vertex in $H_{1}$ plays the role of $u_{7}$. If $v_{4}$ and $v_{1}$ are old, the only neighbours of $a_{1,1}$ are $x$, $a$, $a_{1,1}^{1,1}$ and possibly $a_{0,1}$ if $v_{4} = a_{1,1}$, so that no vertex in $H_{1}$ plays the role of $u_{6}$. Moreover, as in \ref{nu_{1}=1}, all of $u_{1}$, $u_{5}$, $u_{4}$, $u_{8}$ need to be added as unique, distinct second-level vertices in each of the above cases; when $v_{1}$ and $v_{3}$ are old or $v_{1}$ and $v_{4}$ are old, we also need to add $u_{2}$ as a new vertex. Clearly, this forces $\nu_{2} \geq 6$. If less than $2$ first-level vertices are old, even more second-level vertices need to be added as unique, distinct second-level vertices. 

\item When $\nu_{1} \geq 3$: In this case, we need $\nu_{2} \leq 4$. We argue as above that $u_{1}$, $u_{5}$, $u_{4}$, $u_{8}$ all need to be added as unique, distinct second-level vertices. If $v_{1}$ is old, then both $u_{6}$ and $u_{7}$ need to be added as distinct, unique second-level vertices; if $v_{3}$ or $v_{4}$ is old, we add both $u_{3}$ and $u_{4}$ as new second-level vertices. All these show that $u \leq 4$ is a condition that clearly cannot be met.

\end{enumerate}
\subsubsection{Second subcase:} We consider $b_{1,1} \in V\left(H'_{0}\right) \setminus V(H_{0})$ and $v_{2}$ old. By Lemma~\ref{old_first_level_subcase_1}, the number of edges added due to all first-level vertices is $2\nu_{1}+5$. Since each distinct unique second-level vertex brings at least $2$ edges, $\rho(H_{2})$ is at least
\begin{equation}
\frac{17 + 2 + (2 \nu_{1}+5) + 2\nu_{2} + e}{12 + 1 + \nu_{1} + \nu_{2}} = \frac{24 + 2(\nu_{1} + \nu_{2}) + e}{13 + (\nu_{1} + \nu_{2})},
\end{equation}
and for this to be strictly less than $\frac{13}{7}$, we require $\nu_{1}+\nu_{2} + 7e < 1$, implying $\nu_{1} = \nu_{2} = e = 0$. From the proof of Lemma~\ref{old_first_level_subcase_1}, we note that the number of edges contributed by all $4$ old first-level vertices is precisely $5$ only under one of the following situations.

If $v_{2} = a_{1,0}$ and $v_{3} = b_{1,1}$, the edges due to first-level vertices are $\{b_{1,1}, a\}$, $\{b_{1,1}, a_{1,0}\}$, $\{a_{1,0}, b\}$, $\{v_{1}, b_{0,1}\}$ and $\{v_{4}, a_{0,1}\}$ (if $v_{1} \neq b_{1,1}$, then we must have $v_{1} = a_{1,0}$). Note that the only neighbours of $b_{0,1}$ in $H_{1}$ are $b$, $b_{0,1}^{0,1}$ and $v_{1}$, all of which are adjacent to $b$, so that no vertex in $H_{1}$ plays the role of $u_{1}$. Thus we have to add $u_{1}$ as a unique second-level vertex, thus violating the required condition $\nu_{2} = 0$.

If $v_{3} \neq b_{1,1}$, the edges due to first-level vertices are $\{v_{3}, a_{1,0}\}$, $\{v_{2}, b\}$, $\{v_{2}, b_{1,1}\}$, $\{v_{1}, b_{0,1}\}$ and $\{v_{4}, a_{0,1}\}$ (if $v_{1} \neq b_{1,1}$, then we must have $v_{1} = v_{2}$). Once again, $u_{1}$ needs to be added as a unique second-level vertex, violating $\nu_{2} = 0$.

\subsubsection{Third subcase:} When $b_{1,1} \in V(H_{0})$, if each unique second-level vertex brings precisely $2$ edges, $\rho(H_{2})$ is at least
\begin{equation}
\frac{17 + 1 + (2 \nu_{1}+4) + 2\nu_{2} + e}{12 + \nu_{1} + \nu_{2}} = \frac{22 + 2(\nu_{1}+\nu_{2}) + e}{12 + (\nu_{1}+\nu_{2})}.
\end{equation}
For this to be strictly less than $\frac{13}{7}$, we require $(\nu_{1}+\nu_{2}) + 7e < 2$, forcing $e = 0$ and $\nu_{1}+\nu_{2} \leq 1$. The only neighbours of $a_{0,1}$ in $H_{1}$ are $a$, $a_{0,1}^{0,1}$ and $v_{4}$, and the only neighbours to $b_{0,1}$ are $b$, $b_{0,1}^{0,1}$ and $v_{1}$, so that there exist no vertices playing the roles of $u_{8}$ and $u_{1}$. Since both $u_{1}$ and $u_{8}$ need to be added as unique second-level vertices, they must coincide, but by Lemma~\ref{unique_banal_2nd_level}, the common vertex will bring $3$ edges instead of precisely $2$.


\subsection{Showing that for $\alpha < \frac{7}{13}$, zero-one law for $\mathcal{E}_{4}$ holds for $\left\{G(n, n^{-\alpha})\right\}_{n}$:} For $\alpha < \frac{7}{13}$, when $G_{1} \sim G(m, m^{-\alpha})$ and $G_{2} \sim G(n, n^{-\alpha})$ are independent, we show that a.a.s.\ Duplicator wins $\EHR[G_{1}, G_{2}, 4]$. Let Spoiler play on $G_{1}$ and Duplicator on $G_{2}$ without loss of generality, and the vertices picked in $G_{1}$ are denoted $x_{i}$ and in $G_{2}$ by $y_{i}$, for $i \in [4]$. A.a.s.\ as $m, n \rightarrow \infty$, both $G_{1}$ and $G_{2}$ contain induced subgraphs isomorphic to $G_{0}$, and Duplicator makes use of the copy of $G_{0}$ present inside $G_{2}$ in her winning strategy. Given our description of $G_{0}$, it is clear, for the most part (in particular, for the first $2$ rounds), how Duplicator responds to Spoiler while playing on the copy of $G_{0}$ inside $G_{2}$. We point out here some of the less obvious responses.  

\begin{lemma}\label{k=4_lem_1}
A.a.s.\ for every subgraph of $G_{2}$ that is isomorphic to $G_{0}$, and whose vertices are named the same as the corresponding vertices of $G_{0}$, there exist the following vertices, edges and non-edges in $G_{2}$:
\renewcommand{\theenumi}{(\roman{enumi})}
\begin{enumerate}
\item vertex $c$ that is non-adjacent to $x$, $a$ and $a'_{1,1}$ but adjacent to $a_{1,1}$;
\item vertex $c^{1,0}$ that is adjacent to $c$ and $x$ but not to $a$;
\item vertex $c^{0,1}$ that is adjacent to $c$ and $a$ but not to $x$.

\item vertex $d$ that is non-adjacent to $x$, $b$ and $b'_{1,1}$ but adjacent to $b_{1,1}$;
\item vertex $d^{1,0}$ that is adjacent to $d$ and $x$ but not to $b$;
\item vertex $d^{0,1}$ that is adjacent to $d$ and $b$ but not to $x$.
\end{enumerate}

\end{lemma}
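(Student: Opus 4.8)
The plan is to realize the six required vertices as strict extensions of suitable rooted subgraphs of $G_0$ and then to invoke Theorem~\ref{safe_extension_copies}. Since $c, c^{1,0}, c^{0,1}$ only interact with $x, a, a_{1,1}, a'_{1,1}$ and with one another, while $d, d^{1,0}, d^{0,1}$ only interact with $x, b, b_{1,1}, b'_{1,1}$ and with one another, I would handle the two triples by two independent applications of the same argument and then intersect the resulting a.a.s.\ events. I describe the $c$-triple; the $d$-triple is identical after the substitution $a \mapsto b$, $a_{1,1} \mapsto b_{1,1}$, $a'_{1,1} \mapsto b'_{1,1}$.

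First I would let $H_c$ be the subgraph of $G_0$ induced on $\{x, a, a_{1,1}, a'_{1,1}\}$ --- the four roots that the new vertices must touch --- and let $G_c \supset H_c$ be obtained by adjoining $c$ (joined, among the roots, only to $a_{1,1}$), $c^{1,0}$ (joined to $c$ and $x$), and $c^{0,1}$ (joined to $c$ and $a$), all remaining pairs being non-edges. Then $v(G_c, H_c) = 3$ and $e(G_c, H_c) = 5$. The key step is to check that $(G_c, H_c)$ is $\alpha$-safe for every $\alpha < \tfrac{7}{13}$: this is a finite verification over the few sets $S$ with $H_c \subsetneq S \subseteq G_c$, and the largest value of $e(S, H_c)/v(S, H_c)$ is $\tfrac{5}{3}$, attained only at $S = G_c$ (deleting $c$ from $S$ removes two edges at once, so no proper intermediate subset is denser). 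Since $\tfrac{5}{3} < \tfrac{13}{7}$, equivalently $\tfrac{7}{13} < \tfrac{3}{5}$, we obtain $f_\alpha(S, H_c) = v(S, H_c) - \alpha\, e(S, H_c) > 0$ for all such $S$ whenever $\alpha < \tfrac{7}{13}$.

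With $\alpha$-safety established, Theorem~\ref{safe_extension_copies} yields that a.a.s.\ in $G(n, n^{-\alpha})$ every $4$-element vertex set admits $\Theta\!\left(n^{3 - 5\alpha}\right) \to \infty$ strict $(G_c, H_c)$-extensions; applying this to the copy of $H_c$ sitting inside any fixed induced copy of $G_0$ in $G_2$, and recalling that in a strict extension the new vertices realize \emph{exactly} the prescribed edges and non-edges, both among themselves and to the roots, we obtain $c, c^{1,0}, c^{0,1}$ as required. The same argument with $(G_d, H_d)$, $H_d$ induced on $\{x, b, b_{1,1}, b'_{1,1}\}$, produces $d, d^{1,0}, d^{0,1}$, and intersecting the two full-measure events completes the proof. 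I do not expect a genuine obstacle: the only points needing care are including $a'_{1,1}$ (resp.\ $b'_{1,1}$) among the roots so that the non-edge $c \nsim a'_{1,1}$ is forced by the definition of strict extension rather than left to chance, and verifying the numerical inequality $\tfrac{5}{3} < \tfrac{13}{7}$ that places the threshold $\tfrac{7}{13}$ --- at which $G_0$ itself lives --- safely inside the $\alpha$-safe regime.
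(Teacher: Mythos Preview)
Your proposal is correct and follows essentially the same approach as the paper: you take the roots $\{x,a,a_{1,1},a'_{1,1}\}$, adjoin $c,c^{1,0},c^{0,1}$ with the prescribed adjacencies, verify that the maximal edge-to-vertex ratio over intermediate $S$ is $\tfrac{5}{3}<\tfrac{13}{7}$ so that the pair is $\alpha$-safe for all $\alpha<\tfrac{7}{13}$, and conclude via Theorem~\ref{safe_extension_copies}, treating the $d$-triple symmetrically. The paper's proof is the same, only more terse; your explicit inclusion of $a'_{1,1}$ among the roots to force the non-edge $c\nsim a'_{1,1}$ and your remark that the uniform conclusion of Theorem~\ref{safe_extension_copies} applies simultaneously to every embedded copy of $G_0$ are exactly the points that make the argument go through.
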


\begin{proof}
Let $H$ be the induced subgraph on $x, a, a_{1,1}, a'_{1,1}$ and $G$ that on $x, a, a_{1,1}, a'_{1,1}, c, c^{1,0}, c^{0,1}$ in $G_{2}$ such that $E(G) \setminus E(H)$ comprises precisely the edges and non-edges described in the statement of Lemma~\ref{k=4_lem_1}. Consider $H \subset S \subseteq G$ with $c \in S$. It is straightforward to see that $e(S, H)$ is at most $\frac{5}{3}$ times $v(S, H)$, and hence $f_{\alpha}(S, H) > 0$ for $\alpha < \frac{7}{13}$. The argument is similar for the vertices $d$, $d^{1,0}$ and $d^{0,1}$. From Theorem~\ref{safe_extension_copies}, the rest follows.


\end{proof}

Suppose Spoiler picks $x_{3}$ that is non-adjacent to $x_{1}$ and $x_{2}$ -- then Duplicator sets $y_{3} = c$ if $y_{2} = a$, and $y_{3} = d$ otherwise (i.e.\ if $y_{2} = b$). If Spoiler selects $x_{4}$ that is adjacent to $x_{1}, x_{2}, x_{3}$, then Duplicator sets $y_{4} = a_{1,1}$ if $y_{2} = a$, and $y_{4} = b_{1,1}$ otherwise. If Spoiler selects $x_{4}$ that is adjacent to $x_{1}$, $x_{2}$ but not to $x_{3}$, Duplicator sets $y_{4} = a'_{1,1}$ when $y_{2} = a$ and $y_{4} = b'_{1,1}$ otherwise. If Spoiler selects $x_{4}$ that is adjacent to $x_{1}$, $x_{3}$ but not to $x_{2}$, then Duplicator sets $y_{4} = c^{1,0}$ when $y_{2} = a$ and $y_{4} = d^{1,0}$ otherwise. If Spoiler selects $x_{4}$ that is adjacent to $x_{2}$, $x_{3}$ but not to $x_{1}$, then Duplicator sets $y_{4} = c^{0,1}$ when $y_{2} = a$ and $y_{4} = d^{0,1}$ otherwise. Finally, no matter what $x_{1}, x_{2}, x_{3}$ may have been, suppose Spoiler selects $x_{4}$ that is adjacent to at most one of $x_{1}, x_{2}, x_{3}$. If $G$ is the subgraph of $G_{1}$ induced on $\{x_{1}, x_{2}, x_{3}, x_{4}\}$ and $H$ that on $\{x_{1}, x_{2}, x_{3}\}$, the pair $(G, H)$ is $\alpha$-safe for all $\alpha < 1$ -- hence, by Theorem~\ref{safe_extension_copies}, Duplicator a.a.s.\ finds $y_{4}$ in $G_{2}$ such that $y_{4} \sim y_{i}$ iff $x_{4} \sim x_{i}$ for $i \in [3]$.

\newpage
\section{Appendix}\label{Appendix}
Here we compile the remaining cases to complete the proof of \ref{claim_2} of Section~\ref{k=4}. At the very outset, we recall the sets $S_{1}$ and $S_{2}$ defined in \eqref{S_{1}} and \eqref{S_{2}} respectively. In Remarks~\ref{app_rem_1} through \ref{app_rem_5}, we collect all the possibilities for $b_{1,1}$ and the first-level vertices that we consider in \S~\ref{subcase_2} through \S~\ref{subcase_13}. 

\begin{rem}\label{app_rem_1}
If $b_{1,1} \in V\left(H'_{0}\right) \setminus V(H_{0})$, we add edges $\{b_{1,1}, x\}$ and $\{b_{1,1}, b\}$, and if $b_{1,1} \in S_{1} \setminus \{a\}$, we add $\{b_{1,1}, b\}$. In \S~\ref{subcase_2}, \S~\ref{subcase_3}, \S~\ref{subcase_4}, \S~\ref{subcase_6}, \S~\ref{subcase_8} and \S~\ref{subcase_12}, we do not add any edge if $b_{1,1} = a$, otherwise we add edge $\{a, b\}$.
\end{rem}

\begin{rem}\label{app_rem_2}
In \S~\ref{subcase_3} and \S~\ref{subcase_6}, we do not consider $v_{1}$ at all, whereas in \S~\ref{subcase_5}, \S~\ref{subcase_7}, \S~\ref{subcase_9} and \S~\ref{subcase_13}, we do not consider $v_{1}$ when $b_{1,1} = a$, since in all these scenarios, $a$ plays the role of $v_{1}$. What follows excludes these cases. If $v_{1} \in S_{1} \setminus \{a, b_{1,1}\}$, we add edges $\{v_{1}, b\}$ and $\{v_{1}, b_{0,1}\}$, and if $v_{1} = b_{1,1}$, we add $\{b_{1,1}, b_{0,1}\}$. In \S~\ref{subcase_2}, \S~\ref{subcase_4}, \S~\ref{subcase_8} and \S~\ref{subcase_12}, we add edge $\{a, b_{0,1}\}$ when $v_{1} = a$, in \S~\ref{subcase_5}, \S~\ref{subcase_7}, \S~\ref{subcase_9} and \S~\ref{subcase_13}, we add edge $\{a, b\}$ when $v_{1} = a$, and in \S~\ref{subcase_10} and \S~\ref{subcase_11}, we add both $\{a, b\}$ and $\{a, b_{0,1}\}$ when $v_{1} = a$.
\end{rem}

\begin{rem}\label{app_rem_3}
In \S~\ref{subcase_2}, \S~\ref{subcase_3}, \S~\ref{subcase_4}, \S~\ref{subcase_6}, \S~\ref{subcase_8} and \S~\ref{subcase_12}, we do not consider $v_{2}$ whenever $b_{1,1} \in S_{2}$, since $a$ plays the role of $v_{2}$; moreover, in these subsections, we add only edge $\{a, b_{1,1}\}$ whenever $b_{1,1} \in V\left(H'_{0}\right) \setminus V(H_{0})$ and $v_{2} = a$. For all other cases, we have:
\renewcommand{\theenumi}{(\alph{enumi})}
\begin{enumerate}
\item if $b_{1,1} \in V\left(H'_{0}\right) \setminus V(H_{0})$, $v_{2} \in S_{1}$ and we add $\{v_{2}, b\}$ and $\{v_{2}, b_{1,1}\}$;
\item if $b_{1,1} = a$, $v_{2} \in S_{2}$ and we add $\{v_{2}, b\}$;
\item if $b_{1,1} = a_{1,1}$, either $v_{2} \in \left\{a, a_{1,1}^{1,1}\right\}$ and add $\{v_{2}, b\}$, or $v_{2} \in \left\{a_{1,0}, a_{1,0}^{1,0}\right\}$ and we add $\{v_{2}, a_{1,1}\}$ and $\{v_{2}, b\}$;
\item if $b_{1,1} = a_{1,1}^{1,1}$, either $v_{2} \in \left\{a, a_{1,1}\right\}$ and add $\{v_{2}, b\}$, or $v_{2} \in \left\{a'_{1,1}, a_{1,0}, a_{1,0}^{1,0}\right\}$ and we add $\{v_{2}, b\}$ and $\left\{v_{2}, a_{1,1}^{1,1}\right\}$;
\item if $b_{1,1} = a'_{1,1}$, either $v_{2} = a$ and we add $\{a, b\}$, or $v_{2} \in \left\{a_{1,1}^{1,1}, a_{1,0}, a_{1,0}^{1,0}\right\}$ and add $\{v_{2}, b\}$ and $\{v_{2}, a'_{1,1}\}$;
\item if $b_{1,1} = a_{1,0}$, either $v_{2} = a_{1,0}^{1,0}$ and we add $\left\{a_{1,0}^{1,0}, b\right\}$, or $v_{2} \in S_{2}$ and we add $\{v_{2}, a_{1,0}\}$ and $\{v_{2}, b\}$;
\item if $b_{1,1} = a_{1,0}^{1,0}$, either $v_{2} = a_{1,0}$ and we add $\{a_{1,0}, b\}$, or $v_{2} \in S_{2}$ and we add $\left\{v_{2}, a_{1,0}^{1,0}\right\}$ and $\{v_{2}, b\}$.

\end{enumerate}

\end{rem}

\begin{rem}\label{app_rem_4}
If $v_{3}$ is old, either $v_{3} \in S_{2}$, in which case we add the edge $\{v_{3}, a_{1,0}\}$, or else $v_{3} = b_{1,1}$ when $b_{1,1} \in V\left(H'_{0}\right) \setminus V(H_{0})$, and we add the edges $\{b_{1,1}, a\}$ and $\{b_{1,1}, a_{1,0}\}$.
\end{rem}

\begin{rem}\label{app_rem_5}
In \S~\ref{subcase_2}, \S~\ref{subcase_3} and \S~\ref{subcase_4}, we do not consider $v_{4}$ whenever $b_{1,1} \in S_{2}$, since $b_{1,1}$ plays the role of $v_{4}$. In all other cases, if $v_{4} \in S_{2}$ and we add edge $\{v_{4}, b\}$ in \S~\ref{subcase_2} through \S~\ref{subcase_4}, $\{v_{4}, b_{0,1}\}$ in \S~\ref{subcase_5} through \S~\ref{subcase_7}, $\{v_{4}, a_{0,1}\}$ in \S~\ref{subcase_8} through \S~\ref{subcase_10}, and $\left\{v_{4}, b_{0,1}^{0,1}\right\}$ in \S~\ref{subcase_11} through \S~\ref{subcase_13}. If $b_{1,1} \in V\left(H'_{0}\right) \setminus V(H_{0})$ and $v_{4} = b_{1,1}$, we add only the edge $\{a, b_{1,1}\}$ in \S~\ref{subcase_2} through \S~\ref{subcase_4}, edges $\{a, b_{1,1}\}$ and $\{b_{1,1}, b_{0,1}\}$ in \S~\ref{subcase_5} through \S~\ref{subcase_7}, edges $\{a, b_{1,1}\}$ and $\{b_{1,1}, a_{0,1}\}$ in \S~\ref{subcase_8} through \S~\ref{subcase_10}, and edges $\{a, b_{1,1}\}$ and $\left\{b_{1,1}, b_{0,1}^{0,1}\right\}$ in \S~\ref{subcase_11} through \S~\ref{subcase_13}.

\end{rem}

\subsection{Second case:}\label{subcase_2}
Here we assume that $a_{0,1} = b$ and $a_{0,1}^{0,1}$ is added as a new vertex; we then add edges $\{a, b\}$, $\left\{b, a_{0,1}^{0,1}\right\}$, $\left\{a_{0,1}^{0,1}, a\right\}$ to get $H_{0}$ in Figure~\ref{H_{0}_subcase_2} with $17$ edges and $11$ vertices. We split the analysis according to $b_{1,1}$.
\begin{figure}[h!]
  \begin{center}
   \includegraphics[width=0.5\textwidth]{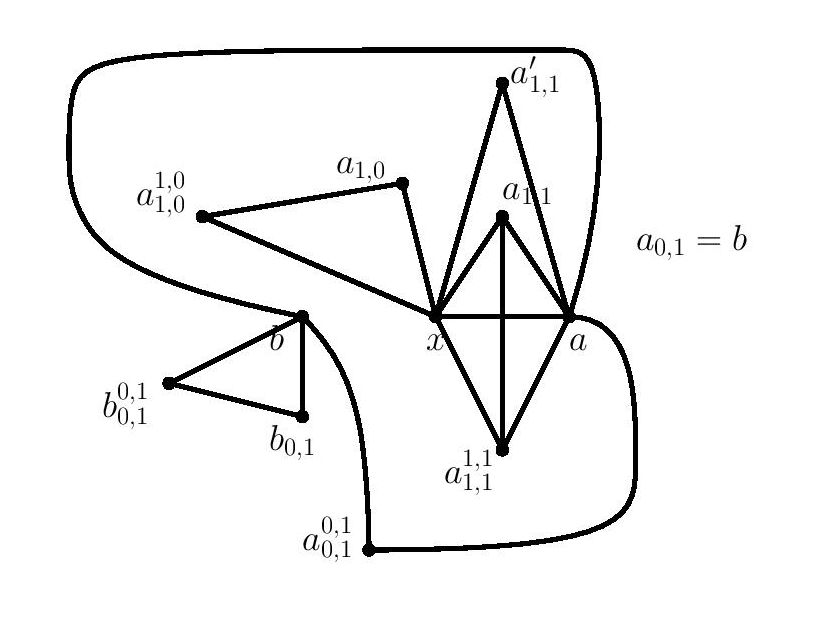}
\end{center}
  \caption{}
\label{H_{0}_subcase_2}
\end{figure}

\subsubsection{When $b_{1,1} \in V\left(H'_{0}\right) \setminus V(H_{0})$:}\label{1_subcase_2} Neither the edge $\left\{v_{3}, a_{1,0}\right\}$ that is added due to an old $v_{3}$ nor the edge $\{v_{1}, b_{0,1}\}$ added due to an old $v_{1}$ coincides with any other edge added due to old first-level vertices. Only when $v_{2}$ and $v_{4}$ are simultaneously old and $v_{2} = a$ and $v_{4} = b_{1,1}$, a single edge is added due to $2$ old first-level vertices. If $2$ first-level vertices are banal and coincide, the common vertex brings at least $4$ edges; if $3$ of them are banal and coincide, the common vertex brings at least $5$ edges; if all of them are banal and coincide, the common vertex brings at least $6$ edges. 

As before, $\nu_{1}$ denotes the number of distinct new first-level vertices. When $\nu_{1} \leq 2$, the number of edges added due to all first-level vertices is $2\nu_{1}+3$ and not higher only if both $v_{2}$ and $v_{4}$ are old with $v_{2} = a$ and $v_{4} = b_{1,1}$, and either $v_{1}$ is new or $v_{1} \in \{a, b_{1,1}\}$ (if not, we get two edges $\{v_{1}, b\}$ and $\{v_{1}, b_{0,1}\}$ from $v_{1}$, making the number of edges due to all first-level vertices at least $2\nu_{1}+4$). Therefore, in $H_{1}$, the graph obtained after accounting for all first-level vertices, $b_{0,1}$ has neighbours $b$, $b_{0,1}^{0,1}$ and $v_{1}$, so that there exists no vertex in $H_{1}$ that is adjacent to $x$ and $b_{0,1}$ but not to $b$. Similarly, $a_{1,0}$ has neighbours $x$, $a_{1,0}^{1,0}$ and $v_{3}$, so that there exists no vertex in $H_{1}$ that is adjacent to $a$ and $a_{1,0}$ but not to $x$. Hence the second-level vertices $u_{1}$ and $u_{5}$ cannot be found in $H_{1}$ without adding further edges. Now, the density of $H_{2}$, the graph obtained after accounting for all second-level vertices, is 
\begin{equation}
\rho(H_{2}) \geq \frac{17 + 2 + (2\nu_{1}+3) + 2\nu_{2} + e}{11 + 1 + \nu_{1} + \nu_{2}} = \frac{22 + 2(\nu_{1}+\nu_{2}) + e}{12 + (\nu_{1}+\nu_{2})},
\end{equation}
and for this to be less than $\frac{13}{7}$, we require $(\nu_{1}+\nu_{2}) + 7e < 2$, which implies $e = 0$ and $\nu_{1}+\nu_{2} \leq 1$. The condition $e = 0$ implies both $u_{1}$ and $u_{5}$ need to be added as unique second-level vertices, and the condition $\nu_{1}+\nu_{2} \leq 1$ implies that at most one of them can be added as a unique second-level vertex. Thus we arrive at a contradiction.

When $\nu_{1} \geq 3$, the number of edges added due to all first-level vertices is at least $2\nu_{1}+4$, making $\rho(H_{2})$ at least
\begin{equation} 
\frac{17 + 2 + 2\nu_{1}+4 + 2\nu_{2} + e}{11 + 1 + \nu_{1} + \nu_{2}} = \frac{23 + 2(\nu_{1}+\nu_{2}) + e}{12 + (\nu_{1}+\nu_{2})} \geq \frac{23}{12} > \frac{13}{7}.
\end{equation}


\subsubsection{When $b_{1,1} = a$:}\label{2_subcase_2} Once again, neither edge $\{v_{3}, a_{1,0}\}$ added if $v_{3}$ is old nor edge $\{v_{1}, b_{0,1}\}$ added if $v_{1}$ is old coincides with any other edge added due to old first-level vertices. When $v_{2}$ and $v_{4}$ are both old and $v_{2} = v_{4} = v$, a single edge $\{v, b\}$ is added due to them. 



If $v_{2}$ and $v_{4}$ are banal and coincide, the common vertex is adjacent to $x$, $b$, $a$; any other pair of coinciding banal first-level vertices bring at least $4$ edges. If $v_{1}, v_{2}, v_{4}$ are banal and coincide, the common vertex is adjacent to $x$, $b$, $b_{0,1}$, $a$; if $v_{2}, v_{3}, v_{4}$ are banal and coincide, the common vertex is adjacent to $x$, $b$, $a$, $a_{1,0}$; all other triplets of coinciding banal first-level vertices bring at least $5$ edges. If $v_{1}, v_{2}, v_{3}, v_{4}$ are banal and coincide, the common vertex is adjacent to $x$, $b$, $b_{0,1}$, $a$, $a_{1,0}$. 

These show that the number of edges added due to all first-level vertices is $2\nu_{1}+3$ and not higher when $\nu_{1} \leq 3$, $v_{2}$ and $v_{4}$ coincide, whether banal or old, and $v_{1}$ is either new or $v_{1} = a$ or $v_{1} = v_{2} = v_{4}$. Thus 
\begin{equation}
\rho(H_{2}) \geq \frac{17 + (2\nu_{1}+3) + 2\nu_{2} + e}{11 + \nu_{1} + \nu_{2}} = \frac{20 + 2(\nu_{1}+\nu_{2}) + e}{11 + (\nu_{1}+\nu_{2})},
\end{equation}
and for this to be less than $\frac{13}{7}$, we need $(\nu_{1}+\nu_{2}) + 7e < 3$, which implies $e = 0$ and $\nu_{1}+\nu_{2} \leq 2$. In $H_{1}$, the only neighbours of $b_{0,1}$ are $b$, $b_{0,1}^{0,1}$ and $v_{1}$, so that no vertex plays the role of $u_{1}$; the only neighbours of $a_{1,0}$ are $x$, $a_{1,0}^{1,0}$ and $v_{3}$, so that no vertex plays the role of $u_{5}$; the only neighbours of $b$ are $b_{0,1}$, $b_{0,1}^{0,1}$, $a_{0,1}^{0,1}$, $a$ and $v$, where $v_{2} = v_{4} = v$ either belongs to $S_{2}$ or to $V\left(H_{1}\right) \setminus V\left(H'_{0}\right)$, and hence there exists no vertex in $H_{1}$ that plays the role of $u_{8}$ which is adjacent to $x$ and $a_{0,1} = b$ but not to $a$. Thus none of $u_{1}$, $u_{5}$, $u_{8}$ can be found in $H_{1}$ without adding further edges. The condition $e = 0$ implies that all of them need to be added as unique second-level vertices, which violates the condition $\nu_{1} + \nu_{2} \leq 2$.

When $\nu_{1} = 4$, we get at least $12$ edges, making $\rho(H_{2})$ at least
\begin{equation}
\frac{17 + 12 + 2\nu_{2} + e}{11 + 4 + \nu_{2}} = \frac{29 + 2\nu_{2} + e}{15 + \nu_{2}} \geq \frac{29}{15} > \frac{13}{7}.
\end{equation}

\subsubsection{When $b_{1,1} = a_{1,0}$:}\label{3_subcase_2} Any two old first-level vertices bring at least $2$ edges. If $v_{2}, v_{3}, v_{4}$ are old, they bring $2$ edges and not more only if $v_{2} = v_{3} = v_{4} = v$ for some $v \in S_{2}$, and these edges are $\{v, b\}$ and $\{v, a_{1,0}\}$. Any other triplet of old first-level vertices bring at least $3$ edges. If all first-level vertices are old, they bring $3$ edges and not more only in the following two scenarios:
\renewcommand{\labelenumi}{\theenumi}
\renewcommand{\theenumi}{\roman{enumi})}
\begin{enumerate*}
\item either $v_{2} = v_{3} = v_{4} = v_{1} = v$ for some $v \in S_{2}$, and the edges added are $\{v, b\}$, $\{v, a_{1,0}\}$ and $\{v, b_{0,1}\}$;
\item or $v_{2} = v_{3} = v_{4} = v$ for some $v \in S_{2}$ and $v_{1} \in \{a, a_{1,0}\}$, and the edges added are $\{v, b\}$, $\{v, a_{1,0}\}$ and $\{v_{1}, b_{0,1}\}$.
\end{enumerate*}
Any $2$ coinciding banal first-level vertices bring at least $4$ edges; if $v_{2}$, $v_{3}$ and $v_{4}$ are banal and coincide, the common vertex is adjacent to $x$, $a$, $b$, $a_{1,0}$, whereas any other triplet of coinciding banal first-level vertices bring at least $5$ edges; if all are banal and coincide, the common vertex is adjacent to $x$, $a$, $b$, $a_{1,0}$ and $b_{0,1}$. Thus the first-level vertices contribute at least $2\nu_{1}+3$ edges when $\nu_{1} \leq 2$ and at least $2\nu_{1}+4$ when $\nu_{1} \geq 3$, making $\rho(H_{2})$ at least
\begin{equation}
\frac{17 + 1 + 2\nu_{1}+3 + 2\nu_{2} + e}{11 + \nu_{1} + \nu_{2}} = \frac{21 + 2(\nu_{1} + \nu_{2}) + e}{11 + (\nu_{1} + \nu_{2})} \geq \frac{21}{11} > \frac{13}{7}.
\end{equation}

\subsubsection{When $b_{1,1} = a_{1,0}^{1,0}$:}\label{4_subcase_2} The edge $\{v_{3}, a_{1,0}\}$ is added when $v_{3}$ is old; the edge $\{v_{1}, b_{0,1}\}$ is added when $v_{1}$ is old; the edge $\{v_{4}, b\}$ is added when $v_{4}$ is old. These edges are all distinct (even if $v_{1} = a_{1,0}$, the edges $\{v_{3}, a_{1,0}\}$ and $\{v_{1}, b_{0,1}\}$ do not coincide since $v_{3} \in S_{2}$ and thus $v_{3} \neq b_{0,1}$). Finally, if $v_{2} \in S_{2}$, it brings both the edges $\{v_{2}, b\}$ and $\left\{v_{2}, a_{1,0}^{1,0}\right\}$, so that even if $v_{2} = v_{4}$, the edge $\left\{v_{2}, a_{1,0}^{1,0}\right\}$ does not coincide with any of the previously mentioned edges; if $v_{2} = a_{1,0}$, it brings edge $\{a_{1,0}, b\}$, which does not coincide with $\{v_{4}, b\}$ since $v_{4} \in S_{2}$, does not coincide with $\{v_{3}, a_{1,0}\}$ since $v_{3} \in S_{2}$, and clearly $\{v_{1}, b_{0,1}\} \neq \{a_{1,0}, b\}$. 

The number of edges added due to all first-level vertices is at least $2\nu_{1}+4$, so that
\begin{equation}
\rho(H_{2}) \geq \frac{17 + 1 + (2\nu_{1}+4) + 2\nu_{2} + e}{11 + \nu_{1} + \nu_{2}} = \frac{22 + 2(\nu_{1}+\nu_{2}) + e}{11 + (\nu_{1}+\nu_{2})} \geq 2 > \frac{13}{7}.
\end{equation}

\subsubsection{When $b_{1,1} \in S_{2}$:}\label{5_subcase_2} Here we only consider $v_{1}$ and $v_{3}$. The number of edges due to them is $2\nu_{1}+2$ and not higher only if $v_{1}$ is new of $v_{1} \in \left\{a, b_{1,1}\right\}$, giving us the edges $\{v_{1}, b_{0,1}\}$ and $\{v_{3}, a_{1,0}\}$. Then
\begin{equation}
\rho(H_{2}) \geq \frac{17 + 1 + (2\nu_{1}+2) + 2\nu_{2} + e}{11 + \nu_{1} + \nu_{2}} = \frac{20 + 2(\nu_{1} + \nu_{2}) + e}{11+(\nu_{1}+\nu_{2})}.
\end{equation}
For this to be less than $\frac{13}{7}$, we need $\nu_{1} + \nu_{2} + 7e < 3$, implying that $\nu_{1}+\nu_{2} \leq 2$ and $e = 0$. The only neighbours of $b_{0,1}$ in $H_{1}$ are $b$, $b_{0,1}^{0,1}$ and $v_{1}$; the only neighbours of $a_{1,0}$ in $H_{1}$ are $x$, $a_{1,0}^{1,0}$ and $v_{3}$; the only common neighbours of $x$ and $b$ in $H_{1}$ are $b_{1,1}$ and $a$. Thus there exist no vertices in $H_{1}$ that play the roles of $u_{1}$, $u_{5}$ and $u_{2}$ without adding further edges, forcing us to add them as distinct unique second-level vertices and violating the condition $\nu_{1} + \nu_{2} \leq 2$. 

\subsection{Third case:}\label{subcase_3}
We assume that $a_{0,1} = b$ and $a_{0,1}^{0,1} = b_{0,1}$, and add edges $\{a, b\}$ and $\left\{a, b_{0,1}\right\}$ to get $H_{0}$ in Figure~\ref{H_{0}_subcase_3} with $16$ edges and $10$ vertices. 
\begin{figure}[h!]
  \begin{center}
   \includegraphics[width=0.5\textwidth]{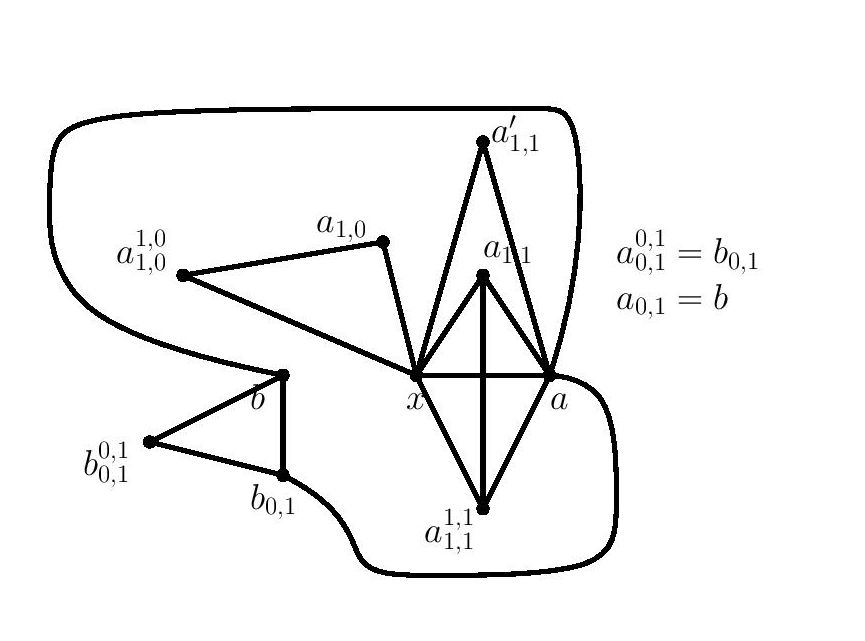}
\end{center}
  \caption{}
\label{H_{0}_subcase_3}
\end{figure}

\subsubsection{When $b_{1,1} \in V\left(H'_{0}\right) \setminus V(H_{0})$:} The only scenario where $2$ old first-level vertices contribute a single edge is when $v_{4} = b_{1,1}$ and $v_{2} = a$ and this edge is $\{a, b_{1,1}\}$; if old, $v_{3}$ brings at least the edge $\{v_{3}, a_{1,0}\}$. Any $2$ coinciding banal first-level vertices bring at least $4$ edges; if all $3$ are banal and coincide, they bring $6$ edges. Thus the first-level vertices contribute $2\nu_{1}+2$ edges and no more only when $\nu_{1} \leq 1$ and $v_{2} = a$, $v_{4} = b_{1,1}$, making $\rho(H_{2})$ at least
\begin{equation}
\frac{16 + 2 + 2\nu_{1}+2 + 2\nu_{2} + e}{10 + 1 + \nu_{1} + \nu_{2}} = \frac{20 + 2(\nu_{1} + \nu_{2}) + e}{11 + (\nu_{1} + \nu_{2})},
\end{equation}
which is less than $\frac{13}{7}$ only if $(\nu_{1}+\nu_{2}) + 7e < 3$, implying $e = 0$ and $\nu_{1}+\nu_{2} \leq 2$. The only neighbours of $a_{1,0}$ in $H_{1}$ are $x$, $a_{1,0}^{1,0}$ and $v_{3}$; the only neighbours of $b_{0,1}$ in $H_{1}$ are $a$, $b$ and $b_{0,1}^{0,1}$; the only common neighbours of $x$ and $b$ are $b_{1,1}$ and $a$ with $a \sim b_{1,1}$ in $H_{1}$. Thus there exist no vertices in $H_{1}$ playing the roles of $u_{1}$, $u_{5}$ and $u_{2}$, requiring them to be added as distinct unique second-level vertices, violating $\nu_{1}+\nu_{2} \leq 2$. When $\nu_{1} \geq 2$, we get at least $2\nu_{1}+3$ edges from first-level vertices, making $\rho(H_{2}) \geq \frac{21}{11} > \frac{13}{7}$.

\subsubsection{When $b_{1,1} = a$:} The edge added due to $v_{2}$ being old is $\{v_{2}, b\}$, that due to $v_{3}$ being old is $\{v_{3}, a_{1,0}\}$, and that due to $v_{4}$ being old is $\{v_{4}, b\}$. If $v_{2}$ and $v_{4}$ are banal and coincide, the common vertex is adjacent to $x$, $b$, $a$; if $v_{2}, v_{3}, v_{4}$ are banal and coincide, the common vertex is adjacent to $x$, $b$, $a$, $a_{1,0}$. The first-level vertices thus contribute $2\nu_{1}+2$ edges and no more only when $v_{2} = v_{4}$, whether old or banal. Thus
\begin{equation}
\rho(H_{2}) \geq \frac{16 + 2\nu_{1}+2 + 2\nu_{2} + e}{10 + \nu_{1} + \nu_{2}} = \frac{18 + 2(\nu_{1} + \nu_{2}) + e}{10 + (\nu_{1} + \nu_{2})},
\end{equation}
which is less than $\frac{13}{7}$ only if $(\nu_{1}+\nu_{2}) + 7e < 4$, implying $e = 0$ and $\nu_{1}+\nu_{2} \leq 3$. The only neighbours of $b_{0,1}$ in $H_{1}$ are $b$, $b_{0,1}^{0,1}$ and $a$, where both $b_{0,1}^{0,1}$ and $a$ are adjacent to $b$, hence there exists no vertex in $H_{1}$ playing the role of $u_{1}$. The only neighbours of $a_{1,0}$ are $x$, $a_{1,0}^{1,0}$ and $v_{3}$, where both $a_{1,0}^{1,0}$ and $v_{3}$ are adjacent to $x$, hence there exists no vertex in $H_{1}$ playing the role of $u_{5}$. The only common neighbours of $x$ and $b$ in $H_{1}$ are $a$ and $v_{2} = v_{4}$ which is adjacent to $b_{1,1} = a$, and hence there exists no vertex in $H_{1}$ playing the role of $u_{2}$. Since $u_{1}$, $u_{2}$, $u_{5}$ need to be added as distinct unique second-level vertices, all first-level vertices and all other second-level vertices must be old.

The only way both $u_{6}$ and $u_{7}$ are old is if $v_{2} = v_{3} = v_{4} = a_{1,1}$, in which case $u_{6} = a_{1,0}$ and $u_{7} = b$; also, $u_{3} = b_{0,1}$ and $u_{4} \in \left\{a'_{1,1}, a_{1,1}^{1,1}\right\}$ will work, whereas $u_{8} = u_{2}$. We must have $b_{1,0} \in \left\{a'_{1,1}, a_{1,1}^{1,1}, a_{1,0}, a_{1,0}^{1,0}, u_{1}\right\}$. If $b_{1,0} = a'_{1,1}$, its neighbours are $x$ and $a$, and if $b_{1,0} = a_{1,1}^{1,1}$, its neighbours are $x$, $a$ and $a_{1,1}$, hence there exist no vertices in $H_{2}$ playing the roles of $b_{1,0}^{1,0}$ and $b_{1,0}^{0,1}$. If $b_{1,0} = a_{1,0}$, its neighbours are $u_{5}$, $a_{1,1}$, $x$ and $a_{1,0}^{1,0}$, there exists no vertex playing the role of $b_{1,0}^{0,1}$. If $b_{1,0} = a_{1,0}^{1,0}$, its neighbours are $x$ and $a_{1,0}$, hence there exist no vertices playing the roles of $b_{1,0}^{1,1}$ and $b_{1,0}^{0,1}$. If $b_{1,0} = u_{1}$, its neighbours are $x$ and $b_{0,1}$, hence there exist no vertices playing the roles of $b_{1,0}^{1,1}$ and $b_{1,0}^{1,0}$.

\subsubsection{When $b_{1,1} = a_{1,0}$:} The first-level vertices contribute $2\nu_{1}+2$ edges only if $v_{2} = v_{3} = v_{4}$, whether old or banal; else at least $2\nu_{1}+3$ edges. Thus
\begin{equation}
\rho(H_{2}) \geq \frac{16 + 1 + (2\nu_{1}+2) + 2\nu_{2} + e}{10 + \nu_{1} + \nu_{2}} = \frac{19 + 2(\nu_{1}+\nu_{2}) + e}{10 + \nu_{2}} \geq \frac{19}{10} > \frac{13}{7}.
\end{equation}


\subsubsection{When $b_{1,1} = a_{1,0}^{1,0}$:} The first-level vertices contribute at least $2\nu_{1}+3$ edges, so that
\begin{equation}
\rho(H_{2}) \geq \frac{16 + 1 + 2\nu_{1}+3 + 2\nu_{2} + e}{10 + \nu_{1} + \nu_{2}} = \frac{20 + 2(\nu_{1} + \nu_{2}) + e}{10 + (\nu_{1}+\nu_{2})} \geq 2 > \frac{13}{7}.
\end{equation}

\subsubsection{When $b_{1,1} \in S_{2}$:} We only consider $v_{3}$, which, if old, brings $1$ edge, and if new, brings $3$ edges. In the former scenario, if each unique second-level vertex brings precisely $2$ edges, we have
\begin{equation}
\rho(H_{2}) \geq \frac{16 + 1 + 1 + 2\nu_{2} + e}{10 + \nu_{2}} = \frac{18 + 2\nu_{2} + e}{10 + \nu_{2}}.
\end{equation}
For this to be less than $\frac{13}{7}$, we need $\nu_{2} + 7e < 4$, implying $e = 0$ and $\nu_{2} \leq 3$. As above, we argue that $u_{1}$, $u_{2}$, $u_{5}$ need to be added as distinct, unique second-level vertices, which leaves no room for any other new vertex nor new edge. If $u_{6}$ and $u_{7}$ are to exist in $H_{1}$, we need $b_{1,1} = v_{3} = a_{1,1}$, giving $u_{6} = a_{1,0}$ and $u_{7} = b$; we set $u_{8} = u_{2}$ and $u_{4} = a_{1,1}^{1,1}$. But the only common neighbour of $b$ and $b_{1,1} = a_{1,1}$ in $H_{1}$ is $a$, which is adjacent to $x$, hence $u_{3}$ needs to be added as a unique second-level vertex. If $u_{3}$ coincides with $u_{5}$, then the common unique second-level vertex brings $4$ edges, which violates our assumption above.

When $v_{3}$ is a unique first-level vertex, 
\begin{equation}
\rho(H_{2}) \geq \frac{16 + 1 + 3 + 2\nu_{2} + e}{10 + 1 + \nu_{2}} = \frac{20 + 2\nu_{2} + e}{11 + \nu_{2}}.
\end{equation}
For this to be less than $\frac{13}{7}$, we need $\nu_{2} + 7e < 3$, forcing $e = 0$ and $\nu_{2} \leq 2$. As above, we need to add $u_{1}$, $u_{2}$, $u_{5}$ as distinct, unique second-level vertices, violating the condition $\nu_{2} \leq 2$.

\subsection{Fourth case:}\label{subcase_4}
We assume that $a_{0,1} = b$ and $a_{0,1}^{0,1} = b_{0,1}^{0,1}$, adding edges $\left\{a, b\right\}$ and $\left\{a, b_{0,1}^{0,1}\right\}$ to get$H_{0}$ in Figure~\ref{H_{0}_subcase_4} with $16$ edges and $10$ vertices.
\begin{figure}[h!]
  \begin{center}
   \includegraphics[width=0.5\textwidth]{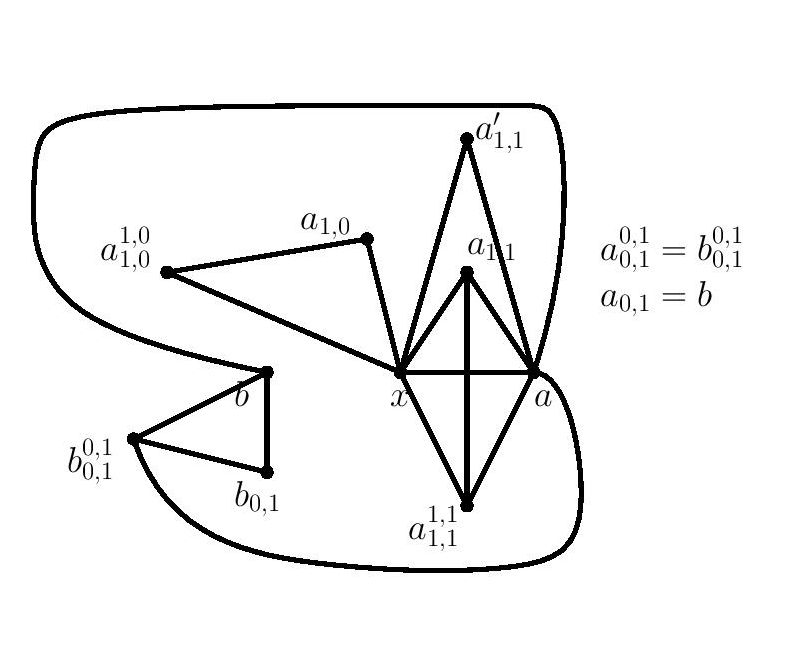}
\end{center}
  \caption{}
\label{H_{0}_subcase_4}
\end{figure} 


\subsubsection{When $b_{1,1} \in V\left(H'_{0}\right) \setminus V(H_{0})$:} We add at least the edge $\{v_{3}, a_{1,0}\}$ when $v_{3}$ is old, at least the edge $\{v_{1}, b_{0,1}\}$ when $v_{1}$ is old, and these are distinct from any other edge added due to old first-level vertices. When $v_{2}$ and $v_{4}$ are both old, they together bring a single edge and not more only when $v_{2} = a$ and $v_{4} = b_{1,1}$. Any $2$ coinciding banal first-level vertices bring at least $4$ edges, any $3$ at least $5$, and when all $4$ are banal and coincide, $6$ edges. Thus the first-level vertices contribute at least $2\nu_{1}+3$ edges (at least $2\nu_{1}+4$ when $\nu_{1} \geq 3$), making $\rho(H_{2})$ at least
\begin{equation}
\frac{16 + 2 + (2\nu_{1}+3) + 2\nu_{2} + e}{10 + 1 + \nu_{1} + \nu_{2}} = \frac{21 + 2(\nu_{1}+\nu_{2}) + e}{11 + (\nu_{1}+\nu_{2})} \geq \frac{21}{11} > \frac{13}{7}.
\end{equation}


\subsubsection{When $b_{1,1} = a$:} Once again, the edge $\{v_{3}, a_{1,0}\}$ added when $v_{3}$ is old and the edge $\{v_{1}, b_{0,1}\}$ added when $v_{1}$ is old are distinct from all other edges added due to old first-level vertices. When $v_{2}$ and $v_{4}$ are old, they together bring a single edge and not more only when $v_{2} = v_{4} = v$ for some $v \in S_{2}$. If $v_{2}$ and $v_{4}$ are banal and coincide, the common vertex is adjacent to $x$, $b$, $a$; any other pair of coinciding banal first-level vertices bring at least $4$ edges; any $3$ coinciding banal first-level vertices which include both $v_{2}$ and $v_{4}$ bring $4$ edges, else at least $5$ edges; if all $4$ are banal and coincide, they bring $5$ edges. The first-level vertices thus bring at least $2\nu_{1}+3$ edges when $\nu_{1} \leq 3$, and $12$ edges when $\nu_{1} = 4$, making

\begin{equation}
\rho(H_{2}) \geq \frac{16 + (2\nu_{1}+3) + 2\nu_{2} + e}{10 + \nu_{1} + \nu_{2}} = \frac{19 + 2(\nu_{1}+\nu_{2}) + e}{10 + (\nu_{1} + \nu_{2})} \geq \frac{19}{10} > \frac{13}{7}.
\end{equation}

\subsubsection{When $b_{1,1} = a_{1,0}$:} If $v_{2}$, $v_{3}$, $v_{4}$ are old, they bring $2$ edges and no more only if $v_{2} = v_{3} = v_{4} = v$ for some $v \in S_{2}$; we add at least the edge $\{v_{1}, b_{0,1}\}$ when $v_{1}$ is old. If $v_{2}$, $v_{3}$, $v_{4}$ are banal and coincide, the common vertex is adjacent to $x$, $b$, $a_{1,0}$, $a$. Any other triplet of coinciding banal first-level vertices bring at least $5$ edges; any pair of coinciding banal first-level vertices bring at least $4$ edges; when all $4$ are banal and coincide, we get $5$ edges. Thus the first-level vertices bring at least $2\nu_{1}+3$ edges when $\nu_{1} \leq 2$ and at least $2\nu_{1}+4$ edges when $\nu_{1} \geq 3$, making

\begin{equation}
\rho(H_{2}) \geq \frac{16 + 1 + (2\nu_{1}+3) + 2\nu_{2} + e}{10 + \nu_{1} + \nu_{2}} = \frac{20 + 2(\nu_{1}+\nu_{2}) + e}{10 + (\nu_{1}+\nu_{2})} \geq 2 > \frac{13}{7}.
\end{equation}

\subsubsection{When $b_{1,1} = a_{1,0}^{1,0}$:} If $v_{2}$ is old and equals $a_{1,0}$, and $v_{4}$ is old, note that the edges $\{a_{1,0}, b\}$ and $\{v_{4}, b\}$ cannot coincide as $v_{4} \in S_{2}$; similarly, if $v_{3}$ is old, $\{v_{3}, a_{1,0}\}$ does not coincide with $\{a_{1,0}, b\}$ as $v_{3} \in S_{2}$. The first-level vertices bring at least $2\nu_{1}+4$ edges, making 
\begin{equation}
\rho(H_{2}) \geq \frac{16 + 1 + (2\nu_{1}+4) + 2\nu_{2} + e}{10 + \nu_{1} + \nu_{2}} = \frac{21 + 2(\nu_{1}+\nu_{2}) + e}{10 + (\nu_{1}+\nu_{2})} > 2 > \frac{13}{7}.
\end{equation}

\subsubsection{When $b_{1,1} \in S_{2}$:} The first-level vertices bring at least $2\nu_{1}+2$ edges, hence
\begin{equation}
\rho(H_{2}) \geq \frac{16 +1 + 2\nu_{1}+2 + 2\nu_{2} + e}{10 + \nu_{1} + \nu_{2}} = \frac{19 + 2(\nu_{1}+\nu_{2}) + e}{10 + (\nu_{1}+\nu_{2})} \geq \frac{19}{10} > \frac{13}{7}.
\end{equation}

\subsection{Fifth case:}\label{subcase_5}
Here we assume that $a_{0,1} = b_{0,1}$ and add $a_{0,1}^{0,1}$ as a new vertex, along with edges $\{a, b_{0,1}\}$, $\left\{b_{0,1}, a_{0,1}^{0,1}\right\}$ and $\left\{a_{0,1}^{0,1}, a\right\}$ to get $H_{0}$ in Figure~\ref{H_{0}_subcase_5} with $17$ edges and $11$ vertices.
\begin{figure}[h!]
  \begin{center}
   \includegraphics[width=0.5\textwidth]{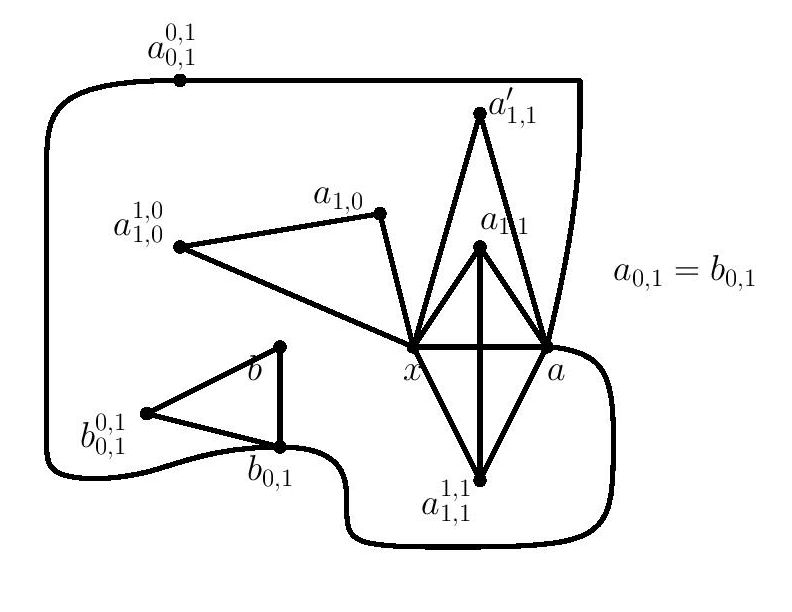}
\end{center}
  \caption{}
\label{H_{0}_subcase_5}
\end{figure}

\subsubsection{When $b_{1,1} \in V\left(H'_{0}\right) \setminus V(H_{0})$:}\label{new_subcase_5} When $v_{3}$ is old, it brings at least the edge $\{v_{3}, a_{1,0}\}$, which coincides with no other edge added due to old first-level vertices except possibly $\{v_{2}, b_{1,1}\}$ if $v_{2} = a_{1,0}$ and $v_{3} = b_{1,1}$, but then we additionally get the edges $\{a_{1,0}, b\}$ and $\{a, b_{1,1}\}$. When $v_{1}$ and $v_{2}$ are old, we get edges $\{a, b\}$ and $\{a, b_{1,1}\}$ when $v_{1} = v_{2} = a$, and at least $3$ edges otherwise; when $v_{1}$ and $v_{4}$ are old, we get edges $\{v, b\}$ and $\{v, b_{0,1}\}$ when $v_{1} = v_{4} = v$ for some $v \in S_{2}$, or $\{v_{4}, b_{0,1}\}$ and $\{a, b\}$ when $v_{4} \in S_{2}$ and $v_{1} = a$, or $\{a, b_{1,1}\}$ and $\{b_{1,1}, b_{0,1}\}$ when $v_{1} = v_{4} = b_{1,1}$, and at least $3$ edges otherwise. All other pairs of old first-level vertices bring at least $2$ edges. When $v_{1}$, $v_{2}$, $v_{3}$ are old, we get edges $\{a, b\}$, $\{a, b_{1,1}\}$ and $\{b_{1,1}, a_{1,0}\}$ when $v_{1} = v_{2} = a$ and $v_{3} = b_{1,1}$, or $\{a, b\}$, $\{a, b_{1,1}\}$ and $\{v_{3}, a_{1,0}\}$ when $v_{1} = v_{2} = a$ and $v_{3} \in S_{2}$, and at least $3$ edges otherwise. When $v_{1}$, $v_{2}$, $v_{4}$ are old, we get edges $\{a, b\}$, $\{a, b_{1,1}\}$ and $\{v_{4}, b_{0,1}\}$ when $v_{1} = v_{2} = a$ and $v_{4} \in S_{2}$, or $\{v, b\}$, $\{v, b_{0,1}\}$ and $\{v, b_{1,1}\}$ when $v_{1} = v_{2} = v_{4} = v$ for some $v \in S_{2}$, or $\{a, b\}$, $\{a, b_{1,1}\}$ and $\{b_{1,1}, b_{0,1}\}$ when $v_{1} = v_{2} = a$ and $v_{4} = b_{1,1}$, and at least $4$ edges otherwise. When $v_{1}$, $v_{3}$, $v_{4}$ are old, we get edges $\{b_{1,1}, b_{0,1}\}$, $\{a, b_{1,1}\}$ and $\{b_{1,1}, a_{1,0}\}$ when $v_{1} = v_{3} = v_{4} = b_{1,1}$, or $\{b_{1,1}, b_{0,1}\}$, $\{a, b_{1,1}\}$ and $\{v_{3}, a_{1,0}\}$ when $v_{1} = v_{4} = b_{1,1}$ and $v_{3} \in S_{2}$, $\{v, b\}$, $\{v, b_{0,1}\}$ and $\{v_{3}, a_{1,0}\}$ when $v_{1} = v_{4} = v$ for some $v \in S_{2}$ and $v_{3} \in S_{2}$, or $\{v_{4}, b_{0,1}\}$, $\{v_{3}, a_{1,0}\}$ and either $\{b_{1,1}, b_{0,1}\}$ or $\{a, b\}$ when $v_{4} \in S_{2}$, $v_{3} \in S_{2}$ and $v_{1}$ is either $b_{1,1}$ or $a$. When $v_{2}$, $v_{3}$, $v_{4}$ are old, they bring at least $4$ edges. When all $4$ are old, they bring at least $4$ edges. 

Any $2$ coinciding banal first-level vertices bring at least $4$ edges, any $3$ bring at least $5$ edges, and when all $4$ are banal and coincide, they bring $6$ edges. Thus the first-level vertices bring at least $2\nu_{1}+4$ edges, so that
\begin{equation}
\rho(H_{2}) \geq \frac{17 + 2 + (2\nu_{1}+4) + 2\nu_{2} + e}{11 + 1 + \nu_{1} + \nu_{2}} = \frac{23 + 2(\nu_{1}+\nu_{2}) + e}{12 + (\nu_{1}+\nu_{2})} \geq \frac{23}{13} > \frac{13}{7}.
\end{equation}

\subsubsection{When $b_{1,1} = a_{1,1}$:}\label{a_{1,1}_subcase_5} When $v_{1}$ and $v_{2}$ are old, we get edge $\{a, b\}$ if $v_{1} = v_{2} = a$, and at least $2$ edges otherwise; when $v_{1}$ and $v_{4}$ are old, we get $\{a_{1,1}, b_{0,1}\}$ if $v_{1} = v_{4} = a_{1,1}$, and at least $2$ edges otherwise. Any other pair of old first-level vertices bring at least $2$ edges. When $v_{1}$, $v_{2}$, $v_{3}$ are old, they bring edges $\{a, b\}$ and $\{v_{3}, a_{1,0}\}$ when $v_{1} = v_{2} = a$, and at least $3$ edges otherwise. When $v_{1}$, $v_{2}$, $v_{4}$ are old, they bring edges $\{a, b\}$ and $\{v_{4}, b_{0,1}\}$ when $v_{1} = v_{2} = a$, edges $\left\{a_{1,1}^{1,1}, b\right\}$ and $\left\{a_{1,1}^{1,1}, b_{0,1}\right\}$ when $v_{1} = v_{2} = v_{4} = a_{1,1}^{1,1}$, edges $\{a_{1,1}, b_{0,1}\}$ and $\{v_{2}, b\}$ when $v_{1} = v_{4} = a_{1,1}$ and $v_{2} \in \left\{a, a_{1,1}^{1,1}\right\}$, and at least $3$ edges otherwise. When $v_{1}$, $v_{3}$, $v_{4}$ are old, they bring edges $\{a_{1,1}, b_{0,1}\}$ and $\{v_{3}, a_{1,0}\}$ if $v_{1} = v_{4} = a_{1,1}$, and at least $3$ edges otherwise. When $v_{2}, v_{3}, v_{4}$ are old, we get at least $3$ edges. When all $4$ are old, we get at least $3$ edges. Contributions from banal first-level vertices is the same as in \S~\ref{new_subcase_5}. The first-level vertices thus contribute at least $2\nu_{1}+3$ edges when $\nu_{1} \leq 2$ and at least $2\nu_{1}+4$ edges when $\nu_{1} \geq 3$, making
\begin{equation}
\rho(H_{2}) \geq \frac{17 + 1 + (2\nu_{1}+3) + 2\nu_{2} + e}{11 + \nu_{1} + \nu_{2}} = \frac{21 + 2(\nu_{1} + \nu_{2}) + e}{11 + \nu_{1} + \nu_{2}} \geq \frac{21}{11} > \frac{13}{7}.
\end{equation}

\subsubsection{When $b_{1,1} \in \left\{ a_{1,1}^{1,1}, a'_{1,1}\right\}$:}\label{a_{1,1}^{1,1}_subcase_5} The analysis is \emph{mutatis mutandis} to \S~\ref{a_{1,1}_subcase_5}.


\subsubsection{When $b_{1,1} = a_{1,0}$:}\label{a_{1,0}_subcase_5} Any $2$ old first-level vertices bring at least $2$ edges, any $3$ bring at least $3$ edges; when all $4$ are old, they bring edges $\{v, a_{1,0}\}$, $\{v, b_{0,1}\}$ and $\{v, b\}$ only if $v_{1} = v_{2} = v_{3} = v_{4} = v$ for some $v \in S_{2}$, and at least $4$ edges otherwise. Any $2$ coinciding banal first-level vertices bring at least $4$ edges, any $3$ bring at least $5$ edges, and when all $4$ are banal and coincide, the common vertex is adjacent to $x$, $b$, $b_{0,1}$, $a_{1,0}$, $a$. The first-level vertices cotntribute at least $2\nu_{1}+3$ edges when $\nu_{1} \leq 1$, and at least $2\nu_{1}+4$ edges when $\nu_{1} \geq 2$, so that 
\begin{equation}
\rho(H_{2}) \geq \frac{17 + 1 + (2\nu_{1}+3) + 2\nu_{2} + e}{11 + \nu_{1} + \nu_{2}} = \frac{21 + 2(\nu_{1}+\nu_{2}) + e}{11 + \nu_{1} + \nu_{2}} \geq \frac{21}{11} > \frac{13}{7}.
\end{equation}

\subsubsection{When $b_{1,1} = a_{1,0}^{1,0}$:}\label{a_{1,0}^{1,0}_subcase_5} The first-level vertices bring at least $2\nu_{1}+4$ edges, so that
\begin{equation}
\rho(H_{2}) \geq \frac{17 + 1 + (2\nu_{1}+4) + 2\nu_{2} + e}{11 + \nu_{1} + \nu_{2}} = \frac{22 + 2(\nu_{1} + \nu_{2}) + e}{11 + (\nu_{1} + \nu_{2})} \geq 2 > \frac{13}{7}.
\end{equation} 

\subsubsection{When $b_{1,1} = a$:} The first-level vertices bring at least $2\nu_{1}+3$ edges, so that
\begin{equation}
\rho(H_{2}) \geq \frac{17 + 1 + (2\nu_{1}+3) + 2\nu_{2} + e}{11 + \nu_{1} + \nu_{2}} = \frac{21 + 2(\nu_{1} + \nu_{2}) + e}{11 + \nu_{1}+\nu_{2}} \geq \frac{21}{11} > \frac{13}{7}.
\end{equation}

\subsection{Sixth case:}\label{subcase_6}
We assume here that $a_{0,1} = b_{0,1}$ and $a_{0,1}^{0,1} = b$, and add $\{a, b\}$ and $\{a, b_{0,1}\}$, to get $H_{0}$ in Figure~\ref{H_{0}_subcase_6} with $16$ edges and $10$ vertices.
\begin{figure}[h!]
  \begin{center}
   \includegraphics[width=0.5\textwidth]{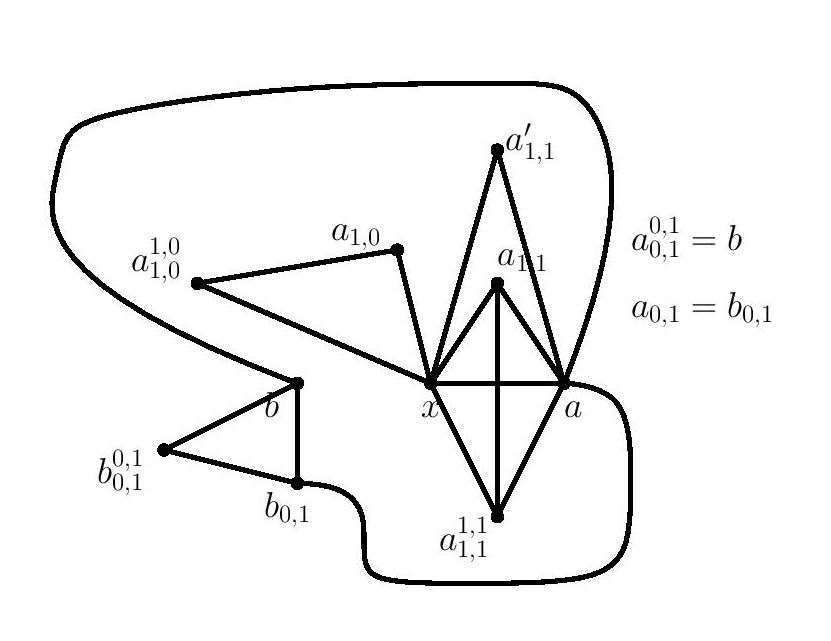}
\end{center}
  \caption{}
\label{H_{0}_subcase_6}
\end{figure} 

\subsubsection{When $b_{1,1} \in V\left(H'_{0}\right) \setminus V(H_{0})$:} We add edge $\{v_{3}, a_{1,0}\}$ when $v_{3}$ is old, edge $\{v_{4}, b_{0,1}\}$ when $v_{4}$ is old, edge $\{v_{2}, b_{1,1}\}$ when $v_{2}$ is old; edges $\{v_{3}, a_{1,0}\}$ and $\{v_{2}, b_{1,1}\}$ coincide only if $v_{2} = a_{1,0}$ and $v_{3} = b_{1,1}$, but then we additionally get edges $\{a_{1,0}, b\}$ and $\{a, b_{1,1}\}$. Any $2$ coinciding banal first-level vertices bring at least $4$ edges, when all $3$ are banal and coincide, they bring $5$ edges. The first-level vertices thus contribute at least $2\nu_{1}+3$ edges, so that
\begin{equation}
\rho(H_{2}) \geq \frac{16 + 2 + (2\nu_{1}+3) + 2\nu_{2} + e}{10 + 1 + \nu_{1} + \nu_{2}} = \frac{21 + 2(\nu_{1}+\nu_{2}) + e}{11 + (\nu_{1}+\nu_{2})} \geq \frac{21}{11} > \frac{13}{7}.
\end{equation}

\subsubsection{When $b_{1,1} = a$:}\label{subcase_6_b_{1,1}=a} We add edge $\{v_{3}, a_{1,0}\}$ when $v_{3}$ is old, edge $\{v_{4}, b_{0,1}\}$ when $v_{4}$ is old, edge $\{v_{2}, b\}$ when $v_{2}$ is old, and all these are distinct. Any $2$ coinciding banal first-level vertices bring at least $4$ edges, when all $3$ are banal and coincide, they bring $5$ edges. The first-level vertices contribute at least $2\nu_{1}+3$ edges, so that
\begin{equation}
\rho(H_{2}) \geq \frac{16 + 2\nu_{1}+3 + 2\nu_{2} + e}{10 + \nu_{1} + \nu_{2}} = \frac{19 + 2(\nu_{1} + \nu_{2}) + e}{10 + (\nu_{1} + \nu_{2})} \geq \frac{19}{10} > \frac{13}{7}.
\end{equation}

\subsubsection{When $b_{1,1} \in \left\{a_{1,0}, a_{1,0}^{1,0}\right\}$:} By the same argument as in \S~\ref{subcase_6_b_{1,1}=a}, the first-level vertices contribute at least $2\nu_{1}+3$ edges, so that
\begin{equation}
\rho(H_{2}) \geq \frac{16 + 1 + (2\nu_{1}+3) + 2\nu_{2} + e}{10 + \nu_{1} + \nu_{2}} = \frac{20 + 2(\nu_{1} + \nu_{2}) + e}{10 + (\nu_{1} + \nu_{2})} \geq 2 > \frac{13}{7}.
\end{equation}

\subsubsection{When $b_{1,1} \in S_{2}$:} The first-level vertices $v_{3}$ and $v_{4}$ bring at least $2\nu_{1}+2$ edges, hence
\begin{equation}
\rho(H_{2}) \geq \frac{16 + 1 + 2\nu_{1}+2 + 2\nu_{2} + e}{10 + \nu_{1} + \nu_{2}} = \frac{19 + 2(\nu_{1} + \nu_{2}) + e}{10 + (\nu_{1}+\nu_{2})} \geq \frac{19}{10} > \frac{13}{7}.
\end{equation}

\subsection{Seventh case:}\label{subcase_7}
We assume that $a_{0,1} = b_{0,1}$ and $a_{0,1}^{0,1} = b_{0,1}^{0,1}$, and we add $\{a, b_{0,1}\}$ and $\left\{a, b_{0,1}^{0,1}\right\}$ to get $H_{0}$ in Figure~\ref{H_{0}_subcase_7} with $16$ edges and $10$ vertices.
\begin{figure}[h!]
  \begin{center}
   \includegraphics[width=0.5\textwidth]{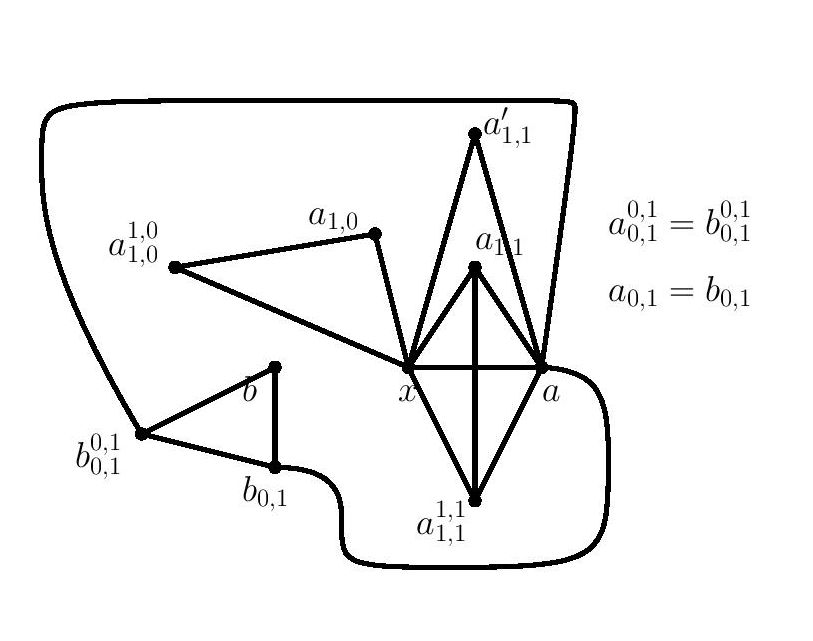}
\end{center}
  \caption{}
\label{H_{0}_subcase_7}
\end{figure} 

\subsubsection{When $b_{1,1} \in V\left(H'_{0}\right) \setminus V(H_{0})$:}\label{new_subcase_7} When $v_{3}$ is old, we add $\{v_{3}, a_{1,0}\}$; when $v_{4}$ is old, we add $\{v_{4}, b_{0,1}\}$; when $v_{2}$ is old, we add $\{v_{2}, b\}$ and $\{v_{2}, b_{1,1}\}$. If $v_{1} = a$, we add edge $\{a, b\}$, which can coincide with $\{v_{2}, b\}$ only if $v_{2} = a$. If $v_{1} \in S_{1} \setminus \{a, b_{1,1}\}$, we add edges $\{v_{1}, b\}$ and $\{v_{1}, b_{0,1}\}$ -- even if $v_{1} = v_{2} = v_{4} = v$ for some $v \in S_{2}$, the edges are $\{v, b_{0,1}\}$, $\{v, b\}$, $\{v, b_{1,1}\}$. Any $2$ coinciding banal first-level vertices bring at least $4$ edges; any $3$ bring at least $5$ edges; if all $4$ are banal coincide, they bring $6$ edges. The first-level vertices contribute at least $2\nu_{1}+4$ edges, so that
\begin{equation}
\rho(H_{2}) \geq \frac{16 + 2 + (2\nu_{1} + 4) + 2\nu_{2} + e}{10 + 1 + \nu_{1} + \nu_{2}} = \frac{22 + 2(\nu_{1} + \nu_{2}) + e }{11 + (\nu_{1} + \nu_{2})} \geq 2 > \frac{13}{7}.
\end{equation}

\subsubsection{When $b_{1,1} = a_{1,1}$:}\label{a_{1,1}_subcase_7} The only scenario where $2$ old first-level vertices together contribute a single edge is when $v_{2} = v_{1} = a$. Any $2$ coinciding banal first-level vertices bring at least $4$ edges; any $3$ bring at least $5$ edges; if all $4$ are banal coincide, they bring $6$ edges. The first-level vertices contribute at least $2\nu_{1}+3$ edges when $\nu_{1} \leq 2$, and at least $2\nu_{1}+4$ when $\nu_{1} \geq 3$, so that

\begin{equation}
\rho(H_{2}) \geq \frac{16 + 1 + 2\nu_{1} + 3 + 2\nu_{2} + e}{10 + \nu_{1} + \nu_{2}} = \frac{20 + 2(\nu_{1} + \nu_{2}) + e }{10 + (\nu_{1} + \nu_{2})} \geq 2 > \frac{13}{7}.
\end{equation}

\subsubsection{When $b_{1,1} \in \left\{a_{1,1}^{1,1}, a'_{1,1}\right\}$:}\label{a_{1,1}^{1,1}_subcase_7} The analysis of \S~\ref{a_{1,1}_subcase_7} applies \emph{mutatis mutandis} here.


\subsubsection{When $b_{1,1} = a_{1,0}$:}\label{a_{1,0}_subcase_7} Any $2$ old first-level vertices bring at least $2$ edges, any $3$ bring at least $3$ edges, and when all $4$ are old, they bring $3$ edges and no more only if $v_{1} = v_{2} = v_{3} = v_{4} = v$ for some $v \in S_{2}$. Any $2$ coinciding banal first-level vertices bring at least $4$ edges, any $3$ bring at least $5$, and if all $4$ are banal and coincide, the common vertex is adjacent to $x$, $a$, $b$, $b_{0,1}$ and $a_{1,0}$. Thus first-level vertices contribute at least $2\nu_{1}+3$ edges when $\nu_{1} \leq 1$ and at least $2\nu_{1}+4$ edges when $\nu_{1} \geq 2$. The remaining analysis is the same as in \S~\ref{a_{1,1}_subcase_7}.

\subsubsection{When $b_{1,1} = a_{1,0}^{1,0}$:}\label{a_{1,0}^{1,0}_subcase_7} The first-level vertices contribute at least $2\nu_{1}+4$ edges, so that
\begin{equation}
\rho(H_{2}) \geq \frac{16 + 1 + (2\nu_{1} + 4) + 2\nu_{2} + e}{10 + \nu_{1} + \nu_{2}} = \frac{21 + 2(\nu_{1}+\nu_{2}) + e}{10 + (\nu_{1}+\nu_{2})} > 2 > \frac{13}{7}.
\end{equation}

\subsubsection{When $b_{1,1} = a$:}\label{a_subcase_7} The first-level vertices bring at least $2\nu_{1}+3$ edges, and the analysis is the same as in \S~\ref{a_{1,1}_subcase_7}.

\subsection{Eighth case:}\label{subcase_8}
We assume that $a_{0,1}$ is a new vertex and $a_{0,1}^{0,1} = b$, and add edges $\{a, b\}$, $\{a, a_{0,1}\}$ and $\{a_{0,1}, b\}$ to get $H_{0}$ in Figure~\ref{H_{0}_subcase_8} with $17$ edges and $11$ vertices.
\begin{figure}[h!]
  \begin{center}
   \includegraphics[width=0.5\textwidth]{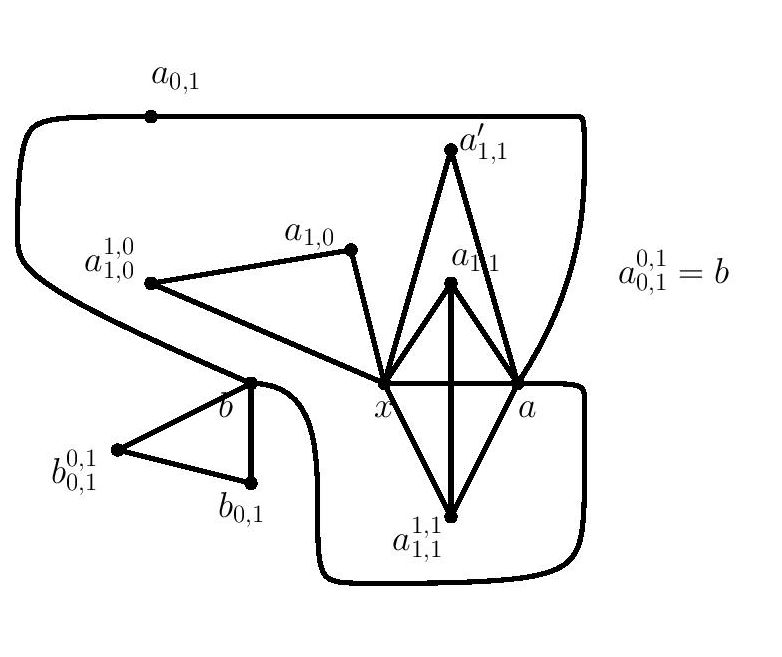}
\end{center}
  \caption{}
\label{H_{0}_subcase_8}
\end{figure} 

\subsubsection{When $b_{1,1} \in V\left(H'_{0}\right) \setminus V(H_{0})$:}\label{new_subcase_8} When $v_{1}$ is old, it brings $\{v_{1}, b_{0,1}\}$; when $v_{2}$ is old, it brings $\{v_{2}, b_{1,1}\}$; when $v_{3}$ is old, it brings $\{v_{3}, a_{1,0}\}$; when $v_{4}$ is old, it brings $\{v_{4}, a_{0,1}\}$. Of these, only $\{v_{3}, a_{1,0}\}$ and $\{v_{2}, b_{1,1}\}$ may coincide, which happens only if $v_{3} = b_{1,1}$ and $v_{2} = a_{1,0}$, but in that case we additionally get edges $\{a_{1,0}, b\}$ and $\{a, b_{1,1}\}$. Any $2$ coinciding banal first-level vertices bring at least $4$ edges, any $3$ at least $5$ edges, and all $4$ bring $7$ edges. The first-level vertices thus contribute at least $2\nu_{1}+4$ edges, so that
\begin{equation}
\rho(H_{2}) \geq \frac{17 + 2 + (2\nu_{1} + 4) + 2\nu_{2} + e}{11 + 1 + \nu_{1} + \nu_{2}} = \frac{23 + 2(\nu_{1}+\nu_{2}) + e}{12 + (\nu_{1}+\nu_{2})} \geq \frac{23}{12} > \frac{13}{7}.
\end{equation}

\subsubsection{When $b_{1,1} = a$:}\label{a_subcase_8} When $v_{1}$ is old, it brings $\{v_{1}, b_{0,1}\}$; when $v_{2}$ is old, it brings $\{v_{2}, b\}$; when $v_{3}$ is old, it brings $\{v_{3}, a_{1,0}\}$; when $v_{4}$ is old, it brings $\{v_{4}, a_{0,1}\}$ -- all these edges are distinct. Any $2$ coinciding banal first-level vertices bring at least $4$ edges, any $3$ at least $5$ edges, and all $4$ bring $6$ edges. The first-level vertices thus contribute at least $2\nu_{1}+4$ edges, so that
\begin{equation}
\rho(H_{2}) \geq \frac{17 + (2\nu_{1}+4) + 2\nu_{2} + e}{11 + \nu_{1} + \nu_{2}} = \frac{21 + 2(\nu_{1}+\nu_{2}) + e}{11 + (\nu_{1}+\nu_{2})} \geq \frac{21}{11} > \frac{13}{7}.
\end{equation}

\subsubsection{When $b_{1,1} \in \left\{a_{1,0}, a_{1,0}^{1,0}\right\}$:}\label{a_{1,0}_subcase_8} By the same argument as in \S~\ref{a_subcase_8}, the first-level vertices bring at least $2\nu_{1}+4$ edges, so that
\begin{equation}
\rho(H_{2}) \geq \frac{17 + 1+ (2\nu_{1}+4) + 2\nu_{2} + e}{11 + \nu_{1}+\nu_{2}} = \frac{22 + 2(\nu_{1} + \nu_{2}) + e}{11 + (\nu_{1} + \nu_{2})} \geq 2 > \frac{13}{7}.
\end{equation}


\subsubsection{When $b_{1,1} \in S_{2}$:} The first-level vertices contribute at least $2\nu_{1}+3$ edges, so that
\begin{equation}
\rho(H_{2}) \geq \frac{17 + 1 + (2\nu_{1} + 3) + 2\nu_{2} + e}{11 + \nu_{1} + \nu_{2}} = \frac{21 + 2(\nu_{1} + \nu_{2}) + e}{11 + (\nu_{1} + \nu_{2})} \geq \frac{21}{11} > \frac{13}{7}.
\end{equation}

\subsection{Ninth case:}\label{subcase_9} 
We assume that $a_{0,1}$ is added as a new vertex and $a_{0,1}^{0,1} = b_{0,1}$, and add edges $\{a_{0,1}, a\}$, $\{a_{0,1}, b_{0,1}\}$ and $\{a, b_{0,1}\}$, to get $H_{0}$ in Figure~\ref{H_{0}_subcase_9} with $17$ edges and $11$ vertices.
\begin{figure}[h!]
  \begin{center}
   \includegraphics[width=0.5\textwidth]{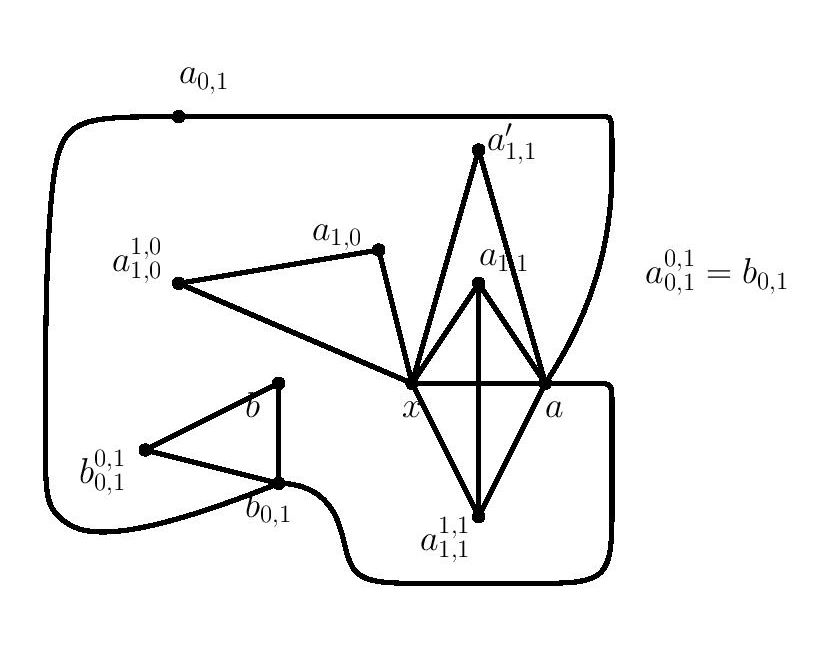}
\end{center}
  \caption{}
\label{H_{0}_subcase_9}
\end{figure}

\subsubsection{When $b_{1,1} \in V\left(H'_{0}\right) \setminus V(H_{0})$:}\label{new_subcase_9} When $v_{1} \neq a$ but old, it contributes edge $\{v_{1}, b_{0,1}\}$; when $v_{2}$ is old, we get $\{v_{2}, b\}$ and $\{v_{2}, b_{1,1}\}$; when $v_{3}$ is old, we get $\{v_{3}, a_{1,0}\}$; when $v_{4}$ is old, we get $\{v_{4}, a_{0,1}\}$ -- of these, only $\{v_{3}, a_{1,0}\}$ and $\{v_{2}, b_{1,1}\}$ may coincide, but this happens only when $v_{2} = a_{1,0}$ and $v_{3} = b_{1,1}$, which brings the additional edge $\{a, b_{1,1}\}$. If $v_{1} = a$, it brings edge $\{a, b\}$, which can coincide with $\{v_{2}, b\}$ only when $v_{2} = a$, but in this case we also get the edge $\{a, b_{1,1}\}$ -- it either coincides with no other edge added due to old first-level vertices (when $v_{3} \neq b_{1,1}$ and $v_{4} \neq b_{1,1}$), or else we additionally get at least one of the edges $\{b_{1,1}, a_{1,0}\}$ (when $v_{3} = b_{1,1}$) and $\{b_{1,1}, a_{0,1}\}$ (when $v_{4} = b_{1,1}$). Any $2$ coinciding banal first-level vertices bring at least $4$ edges, any $3$ at least $5$, and all $4$ bring $7$ edges. The first-level vertices thus contribute at least $2\nu_{1}+4$ edges, so that
\begin{equation}
\rho(H_{2}) \geq \frac{17 + 2 + (2\nu_{1}+4) + 2\nu_{2} + e}{11 + 1 + \nu_{1} + \nu_{2}} = \frac{23 + 2(\nu_{1} + \nu_{2}) + e}{12 + (\nu_{1} + \nu_{2})} \geq \frac{23}{12} > \frac{13}{7}.
\end{equation}

\subsubsection{When $b_{1,1} = a_{1,1}$:}\label{a_{1,1}_subcase_9} The only situation where $2$ old first-level vertices bring a single edge, and no more, is if $v_{1} = v_{2} = a$. When $v_{1}$ is old but $v_{1} \neq a$, it contributes edge $\{v_{1}, b_{0,1}\}$, when $v_{2}$ is old, it contributes $\{v_{2}, b\}$, when $v_{3}$ is old, it contributes $\{v_{3}, a_{1,0}\}$, and when $v_{4}$ is old, it contributes $\{v_{4}, a_{0,1}\}$, all of which are distinct. Any $2$ banal, coinciding first-level vertices bring at least $4$ edges, any $3$ bring at least $5$ edges, all $4$ banal and coinciding bring $7$ edges. The first-level vertices bring at least $2\nu_{1}+3$ edges when $\nu_{1} \leq 2$ and at least $2\nu_{1}+4$ edges when $\nu_{1} \geq 3$, so that

\begin{equation}
\rho(H_{2}) \geq \frac{17 + 1 + (2\nu_{1}+3) + 2\nu_{2} + e}{11 + \nu_{1} + \nu_{2}} = \frac{21 + 2(\nu_{1} + \nu_{2}) + e}{11 + (\nu_{1} + \nu_{2})} \geq \frac{21}{11} > \frac{13}{7}.
\end{equation}

\subsubsection{When $b_{1,1} \in \left\{a, a_{1,1}^{1,1}, a'_{1,1}\right\}$:}\label{a_{1,1}^{1,1}_subcase_9} The analysis of \S~\ref{a_{1,1}_subcase_9} applies \emph{mutatis mutandis} here.


\subsubsection{When $b_{1,1} \in \left\{a_{1,0}, a_{1,0}^{1,0}\right\}$:}\label{a_{1,0}_subcase_9} The first-level vertices contribute at least $2\nu_{1}+4$ edges, so that
\begin{equation}
\rho(H_{2}) \geq \frac{17 + 1 + (2\nu_{1} + 4) + 2\nu_{2} + e}{11 + \nu_{1} + \nu_{2}} = \frac{22 + 2(\nu_{1} + \nu_{2}) + e}{11 + (\nu_{1} + \nu_{2})} \geq 2 > \frac{13}{7}.
\end{equation}



\subsection{Tenth case:}\label{subcase_10}
We assume that $a_{0,1}$ is added as a new vertex and $a_{0,1}^{0,1} = b_{0,1}^{0,1}$, and add edges $\left\{b_{0,1}^{0,1}, a\right\}$, $\left\{b_{0,1}^{0,1}, a_{0,1}\right\}$ and $\{a_{0,1}, a\}$ to get $H_{0}$ in Figure~\ref{H_{0}_subcase_10} with $17$ edges and $11$ vertices.
\begin{figure}[h!]
  \begin{center}
   \includegraphics[width=0.5\textwidth]{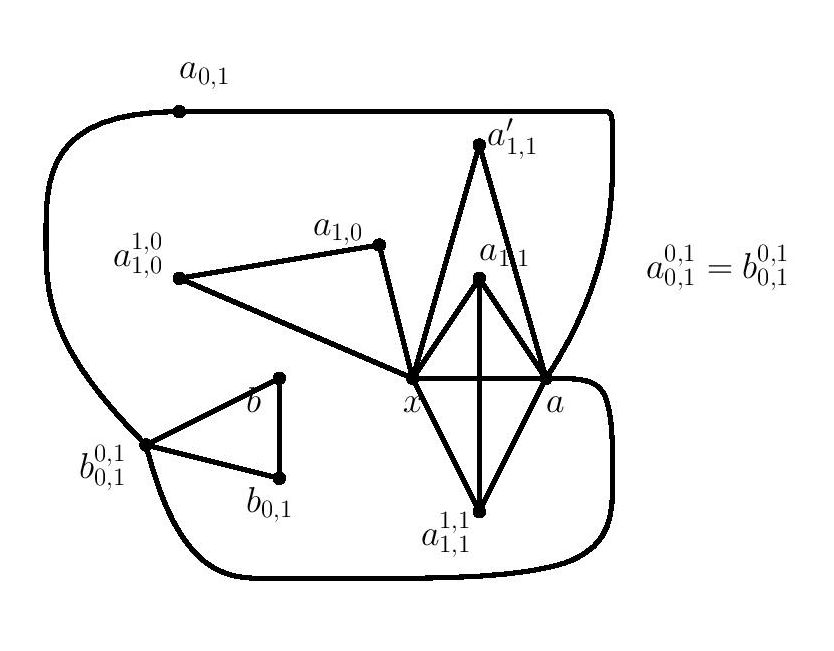}
\end{center}
  \caption{}
\label{H_{0}_subcase_10}
\end{figure}
When $v_{1}$ is old, it brings at least edge $\{v_{1}, b_{0,1}\}$, when $v_{2}$ is old, it at least brings edge $\{v_{2}, b\}$, when $v_{3}$ is old, it brings $\{v_{3}, a_{1,0}\}$, and when $v_{4}$ is old, it brings $\{v_{4}, a_{0,1}\}$, all of which are distinct. No matter what $b_{1,1}$ is, any $2$ coinciding banal first-level vertices bring at least $4$ edges, any $3$ bring at least $5$ edges, and all $4$ bring at least $6$ edges. The first-level vertices thus contribute at least $2\nu_{1}+4$ edges, so that when $b_{1,1} \in V\left(H'_{0}\right) \setminus V(H_{0})$, 
\begin{equation}
\rho(H_{2}) \geq \frac{17 + 2 + (2\nu_{1}+4) + 2\nu_{2} + e}{11 + 1 + \nu_{1} + \nu_{2}} = \frac{23 + 2(\nu_{1} + \nu_{2}) + e}{12 + (\nu_{1} + \nu_{2})} \geq \frac{23}{12} > \frac{13}{7},
\end{equation}
and when $b_{1,1} \in S_{1}$, 
\begin{equation}
\rho(H_{2}) \geq \frac{17 + 1 + (2\nu_{1} + 4) + 2\nu_{2} + e}{11 + \nu_{1} + \nu_{2}} = \frac{22 + 2(\nu_{1}+\nu_{2}) + e}{11 + (\nu_{1}+\nu_{2})} \geq 2 > \frac{13}{7}.
\end{equation}

\subsection{Eleventh case:}\label{subcase_11} 
We assume that $a_{0,1} = b_{0,1}^{0,1}$ and $a_{0,1}^{0,1}$ is added as a new vertex, and we add edges $\left\{a, b_{0,1}^{0,1}\right\}$, $\left\{a, a_{0,1}^{0,1}\right\}$ and $\left\{a_{0,1}^{0,1}, b_{0,1}^{0,1}\right\}$ to get $H_{0}$ in Figure~\ref{H_{0}_subcase_11} with $17$ edges and $11$ vertices.
\begin{figure}[h!]
  \begin{center}
   \includegraphics[width=0.5\textwidth]{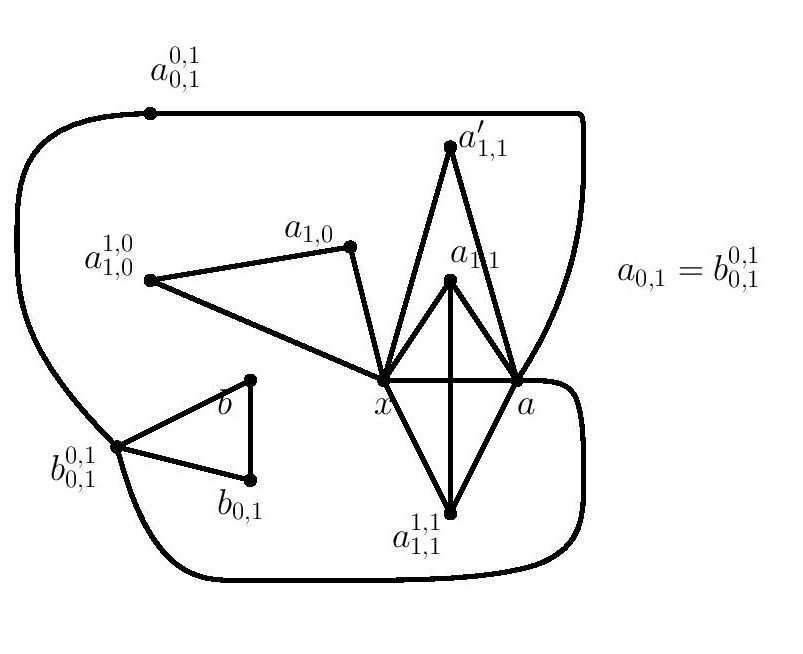}
\end{center}
  \caption{}
\label{H_{0}_subcase_11}
\end{figure}
The argument of \S~\ref{subcase_10} applies \emph{mutatis mutandis} here.


\subsection{Twelfth case:}\label{subcase_12}
We assume that $a_{0,1} = b_{0,1}^{0,1}$ and $a_{0,1}^{0,1} = b$, and add edges $\left\{a, b_{0,1}^{0,1}\right\}$ and $\left\{a, b\right\}$ to get $H_{0}$ in Figure~\ref{H_{0}_subcase_12} with $16$ edges and $10$ vertices.
\begin{figure}[h!]
  \begin{center}
   \includegraphics[width=0.5\textwidth]{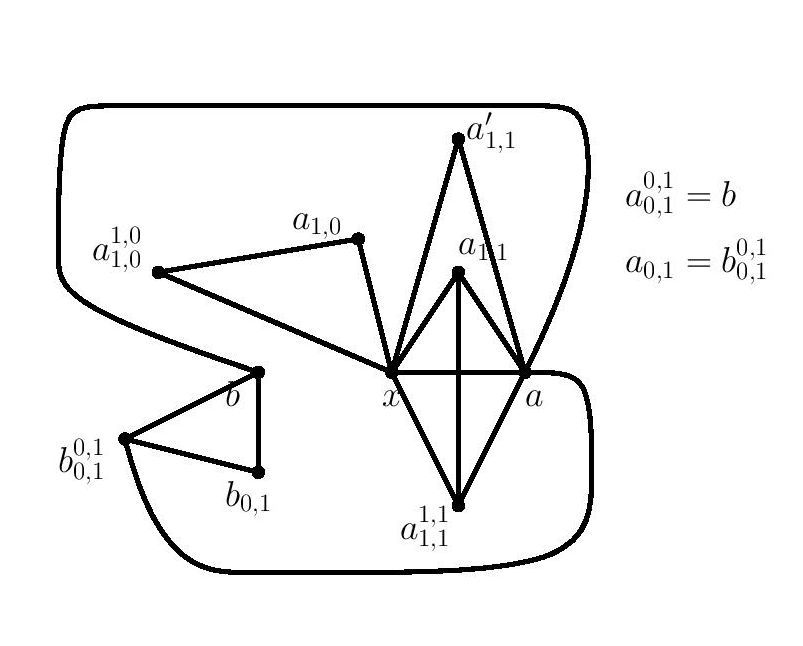}
\end{center}
  \caption{}
\label{H_{0}_subcase_12}
\end{figure}

\subsubsection{When $b_{1,1} \in V\left(H'_{0}\right) \setminus V(H_{0})$:}\label{new_subcase_12} When $v_{1}$ ins old, we get $\{v_{1}, b_{0,1}\}$; when $v_{4}$ is old, we get $\left\{v_{4}, b_{0,1}^{0,1}\right\}$; when $v_{2}$ is old we get $\{v_{2}, b_{1,1}\}$; when $v_{3}$ is old, we either get $\{v_{3}, a_{1,0}\}$ when $v_{3} \neq b_{1,1}$, or $\{b_{1,1}, a_{1,0}\}$ and $\{a, b_{1,1}\}$, only one of which can coincide with $\{v_{2}, b_{1,1}\}$. Any $2$ coinciding banal first-level vertices bring at least $4$ edges, any $3$ at least $5$, and all $4$ bring $7$ edges. The first-level vertices thus contribute at least $2\nu_{1}+4$ edges, so that
\begin{equation}
\rho(H_{2}) \geq \frac{16 + 2 + (2\nu_{1}+4) + 2\nu_{2} + e}{10 + 1 + \nu_{1} + \nu_{2}} = \frac{22 + 2(\nu_{1} + \nu_{2}) + e}{11 + (\nu_{1} + \nu_{2})} \geq 2 > \frac{13}{7}.
\end{equation}

\subsubsection{When $b_{1,1} = a$:}\label{a_subcase_12} When $v_{1}$ is old, we get $\{v_{1}, b_{0,1}\}$; when $v_{2}$ is old, we get $\{v_{2}, b\}$; when $v_{3}$ is old, we get $\{v_{3}, a_{1,0}\}$; when $v_{4}$ is old, we get $\left\{v_{4}, b_{0,1}^{0,1}\right\}$ -- all these edges are distinct. Any $2$ coinciding banal first-level vertices bring at least $4$ edges, any $3$ at least $5$, and all $4$ bring $6$ edges. The first-level vertices thus contribute at least $2\nu_{1}+4$ edges, so that
\begin{equation}
\rho(H_{2}) \geq \frac{16 + (2\nu_{1}+4) + 2\nu_{2} + e}{10 + \nu_{1} + \nu_{2}} = \frac{20 + 2(\nu_{1}+\nu_{2}) + e}{10 + (\nu_{1}+\nu_{2})} \geq 2 > \frac{13}{7}.
\end{equation}

\subsubsection{When $b_{1,1} \in \left\{a_{1,0}, a_{1,0}^{1,0}\right\}$:}\label{a_{1,0}_subcase_12} By the same argument as in \S~\ref{a_subcase_12}, the first-level vertices contribute at least $2\nu_{1}+4$ edges, so that
\begin{equation}
\rho(H_{2}) \geq \frac{16 + 1 + (2\nu_{1}+4) + 2\nu_{2} + e}{10 + \nu_{1} + \nu_{2}} = \frac{21 + 2(\nu_{1}+\nu_{2}) + e}{10 + (\nu_{1}+\nu_{2})} > 2 > \frac{13}{7}.
\end{equation}

\subsubsection{When $b_{1,1} \in S_{2}$:}\label{S_{2}_subcase_12} The first-level vertices contribute at least $2\nu_{1}+3$ edges, so that
\begin{equation}
\rho(H_{2}) \geq \frac{16 + 1 + (2\nu_{1}+3) + 2\nu_{2} + e}{10 + \nu_{1} + \nu_{2}} = \frac{20 + 2(\nu_{1}+\nu_{2}) + e}{10 + (\nu_{1}+\nu_{2})} \geq 2 > \frac{13}{7}.
\end{equation}

\subsection{Thirteenth case:}\label{subcase_13}
We assume that $a_{0,1} = b_{0,1}^{0,1}$ and $a_{0,1}^{0,1} = b_{0,1}$, and add the edges $\left\{a, b_{0,1}^{0,1}\right\}$ and $\left\{a, b_{0,1}\right\}$ to get $H_{0}$ in Figure~\ref{H_{0}_subcase_13} with $16$ edges and $10$ vertices.
\begin{figure}[h!]
  \begin{center}
   \includegraphics[width=0.5\textwidth]{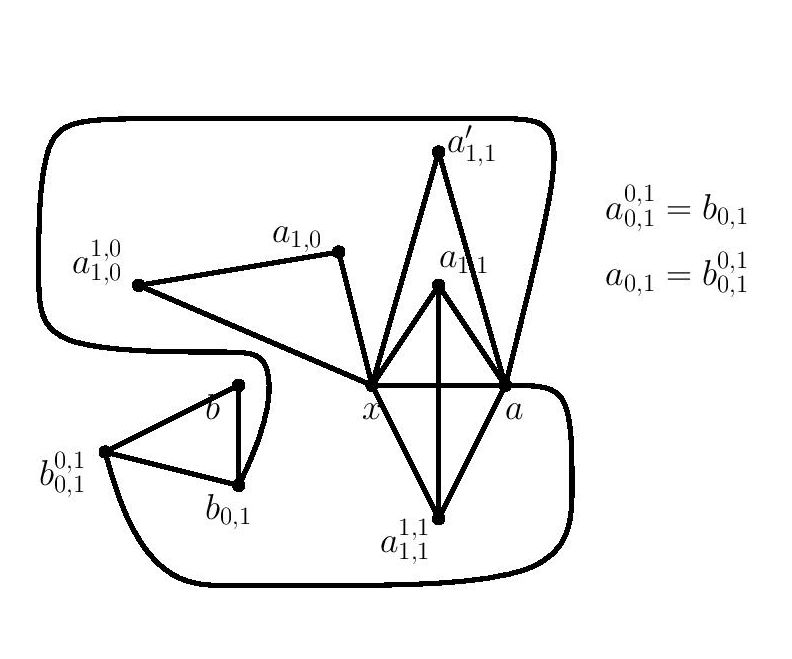}
\end{center}
  \caption{}
\label{H_{0}_subcase_13}
\end{figure}

\subsubsection{When $b_{1,1} \in V\left(H'_{0}\right) \setminus V(H_{0})$:}\label{new_subcase_13} When $v_{1}$ is old but $v_{1} \neq a$, we get the edge $\{v_{1}, b_{0,1}\}$; when $v_{4}$ is old, we get the edge $\left\{v_{4}, b_{0,1}^{0,1}\right\}$; when $v_{3}$ is old, it brings at least the edge $\{v_{3}, a_{1,0}\}$; when $v_{2}$ is old, it brings $\{v_{2}, b\}$ and $\{v_{2}, b_{1,1}\}$. The edges $\{v_{3}, a_{1,0}\}$ and $\{v_{2}, b_{1,1}\}$ can coincide only if $v_{3} = b_{1,1}$ and $v_{2} = a_{1,0}$, but then we also additionally get $\{a, b_{1,1}\}$. When $v_{1} = a$, we get the edge $\{a, b\}$, which can coincide with $\{v_{2}, b\}$ if $v_{2} = a$, but we also get the edge $\{a, b_{1,1}\}$ due to $v_{2}$ which either stays distinct from all other edges added due to old first-level vertices (when $v_{3} \neq b_{1,1}$ and $v_{4} \neq b_{1,1}$), or else is accompanied by at least one of $\{b_{1,1}, a_{1,0}\}$ and $\left\{b_{1,1}, b_{0,1}^{0,1}\right\}$. 

Any $2$ coinciding banal first-level vertices bring at least $4$ edges, any $3$ at least $5$, and all $4$ bring $7$ edges. The first-level vertices thus contribute at least $2\nu_{1}+4$ edges, so that
\begin{equation}
\rho(H_{2}) \geq \frac{16 + 2 + (2\nu_{1}+4) + 2\nu_{2} + e}{10 + 1 + \nu_{1} + \nu_{2}} = \frac{22 + 2(\nu_{1} + \nu_{2}) + e}{11 + (\nu_{1} + \nu_{2})} \geq 2 > \frac{13}{7}.
\end{equation}

\subsubsection{When $b_{1,1} \in S_{2}$:}\label{a_{1,1}_subcase_13} When $v_{3}$ is old, it adds edge $\{v_{3}, a_{1,0}\}$; when $v_{4}$ is old, it add edge $\left\{v_{4}, b_{0,1}^{0,1}\right\}$ -- both these edges are distinct from all other edges added due to old first-level vertices. The only situation where $2$ old first-level vertices together bring a single edge, and no more, is when $v_{1} = v_{2} = a$. Any $2$ coinciding banal first-level vertices bring at least $4$ edges, any $3$ at least $5$, and all $4$ bring $7$ edges. The first-level vertices thus contribute at least $2\nu_{1}+3$ edges when $\nu_{1} \leq 2$ and at least $2\nu_{1}+4$ when $\nu_{1} \geq 3$, so that

\begin{equation}
\rho(H_{2}) \geq \frac{16 + 1 + (2\nu_{1}+3) + 2\nu_{2} + e}{10 + \nu_{1}+\nu_{2}} = \frac{20 + 2(\nu_{1}+\nu_{2}) + e}{10 + (\nu_{1}+\nu_{2})} \geq 2 > \frac{13}{7}.
\end{equation}

\subsubsection{When $b_{1,1} \in \left\{a_{1,0}, a_{1,0}^{1,0}\right\}$:}\label{a_{1,0}_subcase_13} When $v_{1}$ is old, we get either edge $\{v_{1}, b_{0,1}\}$ or $\{a, b\}$; when $v_{2}$ is old, we get $\{v_{2}, b\}$ where $v_{2} \neq a$; when $v_{3}$ is old, we get $\{v_{3}, a_{1,0}\}$; when $v_{4}$ is old, we get $\left\{v_{4}, b_{0,1}^{0,1}\right\}$ -- all these edges are distinct. Any $2$ coinciding banal first-level vertices bring at least $4$ edges, any $3$ at least $5$, and all $4$ bring at least $6$ edges. The first-level vertices thus bring at least $2\nu_{1}+4$ edges, so that
\begin{equation} 
\rho(H_{2}) \geq \frac{16 + 1 + (2\nu_{1}+4) + 2\nu_{2} + e}{10 + \nu_{1} + \nu_{2}} = \frac{21 + 2(\nu_{1}+\nu_{2}) + e}{10 + (\nu_{1}+\nu_{2})} > 2 > \frac{13}{7}.
\end{equation}

\subsubsection{When $b_{1,1} = a$:}\label{a_subcase_13} The first-level vertices bring at least $2\nu_{1}+3$ edges, so that 
\begin{equation}
\frac{16 + 1 + (2\nu_{1}+3) + 2\nu_{2} + e}{10 + \nu_{1} + \nu_{2}} = \frac{20 + 2(\nu_{1} + \nu_{2}) + e}{10 + (\nu_{1} + \nu_{2})} \geq 2 > \frac{13}{7}.
\end{equation}

\end{document}